\newenvironment{poliabstract}[1]
  {\begin{abstract}}
  {\end{abstract}}
\newtheoremstyle{theoremnoperiod}
  {\topsep}   
  {\topsep}   
  {\itshape}  
  {0pt}       
  {\bfseries} 
  {}          
  {5pt plus 1pt minus 1pt} 
  {}          
\newtheorem*{theorem*}{Théorème}
\newtheorem{theorem}{Théorème}[section]
\newtheorem{corollary}[theorem]{Corollaire}
\newtheorem{lemma}[theorem]{Lemme}
\newtheorem*{conjecture*}{Conjecture}
\newtheorem{proposition}[theorem]{Proposition}
\numberwithin{equation}{section}
\theoremstyle{theoremnoperiod}
\newtheorem*{thmnodot*}{Théorème}
\newtheorem{lemmanodot}[theorem]{Lemme}
\DeclareMathOperator{\li}{li}
\DeclareMathOperator{\N}{\mathbb{N}}
\DeclareMathOperator{\R}{\mathbb{R}}
\DeclareMathOperator{\Q}{\mathbb{Q}}
\DeclareMathOperator{\C}{\mathbb{C}}
\DeclareMathOperator{\sD}{\mathscr{D}}
\DeclareMathOperator{\sC}{\mathscr{C}}
\DeclareMathOperator{\sA}{\mathscr{A}}
\DeclareMathOperator{\sP}{\mathscr{P}}
\DeclareMathOperator{\sE}{\mathscr{E}}
\DeclareMathOperator{\sF}{\mathscr{F}}
\DeclareMathOperator{\sgn}{\rm{sgn}}
\DeclareMathOperator{\e}{\rm e}
\def\d{\,{\rm d}}
\def\1{{\bf 1}}
\DeclareMathOperator{\PP}{\mathbb P}
\DeclareMathOperator{\gL}{\mathfrak L}
\DeclareMathOperator{\sJ}{\mathscr J}
\DeclareMathOperator{\sG}{\mathscr G}
\DeclareMathOperator{\sU}{\mathscr U}
\DeclareMathOperator{\sH}{\mathscr H}
\DeclareMathOperator{\sR}{\mathscr R}
\DeclareMathOperator{\sQ}{\mathscr Q}
\DeclareMathOperator{\sK}{\mathscr K}
\DeclareMathOperator{\gc}{\mathfrak c}
\DeclareMathOperator{\gr}{\mathfrak r}
\DeclareMathOperator{\gs}{\mathfrak s}
\DeclareMathOperator{\gh}{\mathfrak h}
\DeclareMathOperator{\gf}{\mathfrak f}
\DeclareMathOperator{\gv}{\mathfrak v}
\DeclareMathOperator{\gp}{\mathfrak p}
\DeclareMathOperator{\gE}{\mathfrak E}
\DeclareMathOperator{\gI}{\mathfrak I}
\def\p{{{p}_{\nu}}}
\def\po{{{p}_{\omega}}}
\def\pO{{{p}_{\Omega}}}
\def\res{\mathop{\hbox{\rm R\'es}}\nolimits}
\renewcommand{\leq}{\leqslant}
\renewcommand{\geq}{\geqslant}
\newcommand{\CommaBin}{\mathbin{\raisebox{0.45ex}{\rm ,}}}
\definecolor{vert}{rgb}{0,0.5,0}
\definecolor{violet}{rgb}{0.7,0.1,0.8}
\definecolor{orange}{rgb}{1,0.3,0}
\title{Étude statistique du facteur premier médian, 4 : \goodbreak
somme des inverses}
\author{Jonathan Rotgé\thanks{Adresse e-mail : jonathan.rotge@etu.univ-amu.fr\\ 2020 {\it Mathematics Subject Classification}: 11N25, 11N37.\\ {\it Key words and phrases.} middle prime factor, sum of reciprocals.}\\ \\ {\it\small  Université d'Aix-Marseille,\, Institut de Mathématiques de Marseille CNRS UMR 7373,}\\ {\it\small 163 Avenue De Luminy, Case 907, 13288 Marseille Cedex 9, FRANCE}}
\date{}
\begin{document}
\maketitle
\thispagestyle{empty}

\selectlanguage{english}
\begin{poliabstract}{Abstract} 
We consider the sum of the reciprocals of the middle prime factor of an integer, defined according to multiplicity or not. We obtain an asymptotic expansion in the first case and an asymptotic formula involving an implicit parameter in the second. Both these results improve on previous estimates available in the literature.
 \end{poliabstract}

\selectlanguage{french}
\begin{poliabstract}{Résumé}
   Nous nous intéressons à la somme des inverses du facteur premier médian d'un entier, défini en tenant compte ou non de la multiplicité. Nous obtenons un développement asymptotique de cette quantité dans le premier cas et une formule asymptotique faisant intervenir un paramètre défini implicitement dans le second.
\end{poliabstract}
\bigskip


\section{Introduction et énoncé des résultats}


\subsection{Description et historique du problème}

Pour chaque entier naturel $n\geq 1$, posons
\[\omega(n):=\sum_{p|n}1,\qquad\Omega(n):=\sum_{p^k||n}k\]
Lorsque $n>1$, désignons par $\{q_j(n)\}_{1\leq j\leq\omega(n)}$ (resp. $\{Q_j(n)\}_{1\leq j\leq\Omega(n)}$) la suite croissante des facteurs premiers de $n$ comptés sans (resp. avec) multiplicité et notons $P^+(n)$ (resp. $P^-(n)$) son plus grand (resp. petit) facteur premier. Adoptons en outre les conventions $P^+(1)=1$ et $P^-(1)=+\infty$.\par
Depuis la fin des années 1970, de nombreux travaux ont porté sur la somme des inverses de facteurs premiers particuliers. En 1986, Erd\H os, Ivi\'c et Pomerance \cite{bib:erdos_ivic} obtiennent la formule asymptotique
\begin{equation}\label{eq:formule:somme_inverses_greatest_prime_factor}
	\sum_{n\leq x}\frac1{P^+(n)}=\bigg\{1+O\bigg(\frac{\sqrt{\log_2 x}}{\log x}\bigg)\bigg\}x\int_{2}^x\varrho\Big(\frac{\log x}{\log t}\Big)\frac{\d t}{t^2}\quad(x\geq 3),
\end{equation}
où $\varrho$ désigne la fonction de Dickman définie comme la solution continue sur $[0,\infty[$ de l'équation différentielle aux différences $u\varrho'(u)=-\varrho(u-1)$ pour $u>1$, avec la condition initiale $\varrho(u)=1$ pour $0\leq u\leq 1$. Dans les années suivantes, De Koninck \cite{bib:dekoninck:Pk} obtient une formule analogue à \eqref{eq:formule:somme_inverses_greatest_prime_factor} pour la somme des inverses du $k$-ième plus grand facteur premier des entiers. De Koninck et Galambos \cite{bib:dekoninck:random} démontrent ensuite un résultat semblable pour la somme des inverses d'un facteur premier choisi aléatoirement. À la conférence de théorie analytique des nombres d'Oberwolfach de 1984, Erd\H{o}s demanda à De Koninck s'il s'était intéressé à la somme des inverses du facteur premier médian
\[\p(n):=\begin{cases}q_{\lceil\omega(n)/2\rceil}(n)&{\rm si\;} \nu=\omega,\\ Q_{\lceil\Omega(n)/2\rceil}(n)&{\rm si\;} \nu=\Omega,\end{cases}\]
définie par
\begin{equation}\label{def:Snux}S_{\nu}(x):=\sum_{n\leq x}\frac1{\p(n)}\cdot\end{equation}
Près de trois décennies plus tard, De Koninck et Luca \cite{bib:dekoninck} apportent une réponse partielle dans le cas $\nu=\omega$ en démontrant que
\begin{equation}\label{eq:dekoninckluca}
	S_{\omega}(x)=\frac{x}{\log x}\e^{\{\sqrt 2+o(1)\}\sqrt{(\log_2 x)\log_3 x}}\quad (x\to\infty).
\end{equation}
Ici et dans la suite, $\log_k$ désigne la $k$-ième itérée du logarithme. Ce résultat a été précisé par Ouellet \cite{bib:ouellet} qui a obtenu 
\begin{equation}\label{eq:ouellet}
	\begin{aligned}
		\log \frac{S_\omega(x)}{x/\log x}=\sqrt{2(\log_2 x)\log_3 x}\bigg\{1+\frac{P_1(\log_4 x)}{\log_3 x}+\frac{P_2(\log_4 x)}{(\log_3 x)^2}+O\bigg(\frac1{\{\log_3 x\}^2}\bigg)\bigg\}\CommaBin
	\end{aligned}
\end{equation}
où les $P_j\ (j\in\{1,2\})$ sont les polynômes définis en \eqref{eq:coefficients:cnj:values} {\it infra}. \par 
Le cas $\nu=\Omega$ a été envisagé par Doyon et Ouellet dans \cite{bib:doyon}, qui établissent la formule asymptotique
\begin{equation}\label{eq:form_asymp_SOmega_ouellet}
	S_{\Omega}(x)=\frac{{\gc_1}x}{\sqrt{\log x}}\Big\{1+O\Big(\frac1{\log_2 x}\Big)\Big\}\quad (x\geq 3),
\end{equation}
où la constante $\gc_1$ est définie en \eqref{def:c1} {\it infra}. \par
À ce stade, deux questions sont donc en suspens: (a) déterminer un terme d'erreur optimal pour la formule asymptotique \eqref{eq:form_asymp_SOmega_ouellet}; (b) obtenir une véritable formule asymptotique pour $S_\omega(x)$.
\par 
Il est clair qu'une connaissance précise des lois locales \begin{equation}
\label{loisloc}
M_{\nu}(x,p):=|\{n\leq x:\p(n)=p\}|\quad (2\leq p\leq x),
\end{equation}
permettrait de résoudre ces deux problèmes.
Dans \cite{bib:papier2}, nous avons obtenu des estimations satisfaisantes lorsque $\varepsilon<\beta_p:=(\log_2 p)/\log_2 x<1-\varepsilon\ (\varepsilon>0)$. Cependant, il s'avère que la somme $S_\nu(x)$ est dominée par la contribution de valeurs de $\p(n)$ notablement plus petites que la borne inférieure de ce domaine: $\log p\approx \sqrt{\log_2x}$ si $\nu=\omega$ et $p\ll 1$ si $\nu=\Omega$. En effet, nous verrons en \eqref{eq:SOmstardevAsRes:2} que, dans le second cas, nous avons
\[S_{\Omega}(x)\sim\frac{x}{\sqrt{\log x}}\sum_{p}\frac{f(p)}{p^2}\CommaBin \]
avec $f(p)\ll(\log p)^{3/2}$.
Comme l'ont notamment noté McNew, Pollack et Singha Roy \cite{bib:pollack}, les comportements asymptotiques de $S_\omega(x)$ et $S_\Omega(x)$ diffèrent significativement. Nous avons
 \begin{equation}\label{eq:comp:SO;So}
 	\frac{S_{\Omega}(x)}{S_\omega(x)}=(\log x)^{1/2+o(1)}\quad (x\to\infty).
 \end{equation}
Par ailleurs, de \cite[th. 1.1]{bib:papier2} découle directement la relation
\begin{equation}\label{eq:equivalent:rapport:loislocales}
	\frac{M_\Omega(x,p)}{M_\omega(x,p)}\sim \frac{C(\beta_p)(\log x)^{1/2-(2\sqrt{\beta_p(1-\beta_p)}-3\beta_p/2)}}{\sqrt{\log_2 x}}\quad(\varepsilon>0,\, \varepsilon\leq\beta_p\leq \tfrac15-\varepsilon),
\end{equation}
pour une fonction $C(v)\asymp1$ explicite. \par
Ces phénomènes peuvent s'expliquer par la structure particulière des entiers ayant un facteur premier médian $p$ petit relativement à la taille de $x$. En effet, une approche naturelle révèle (voir {\it e.g.} \cite{bib:pollack},\,\cite{bib:papier2}) que les lois locales dépendent directement des deux quantités
\begin{equation}\label{def:phinuklambda}\Phi_{\nu,k}(x,p):=\sum_{\substack{b\leq x\\ P^-(b)> p\\ \nu(b)=k}}1,\quad \lambda_{\nu}(p,k):=\sum_{\substack{P^+(a)\leq p\\ \nu(a)=k}}\frac1a\quad(x,p\geq 2,\,k\geq 1)\CommaBin\end{equation}
portant respectivement sur les entiers criblés et sur les entiers friables. \par
Alors que la première somme possède un comportement asymptotique analogue pour les deux valeurs $\nu=\omega$ et $\nu=\Omega$ (voir {\it e.g.} \cite[th. 6 \& th.7]{bib:alladi}), la seconde présente un changement radical de comportement dans le cas $\nu=\Omega$ lorsque le rapport $k/\log_2 p$ dépasse $2$ (voir \cite[cor. 3.4, 3.5 \& 3.6]{bib:papier2}). Cette différence de comportement s'explique notamment par l'accumulation des petits facteurs premiers dans la décomposition des entiers friables (voir \cite[cor. 1.7]{bib:bret}).\par\smallskip
Notre premier objectif consiste à établir, par la méthode du col, une formule asymptotique pour $S_{\omega}(x)$ avec terme d'erreur effectif. L'estimation obtenue dépend d'un paramètre implicite, caractéristique de la méthode. Comme cela est génériquement le cas, l'insertion de ce paramètre fournit une évaluation explicite dont nous pouvons déduire un développement asymptotique pour le membre de gauche de \eqref{eq:ouellet}.\par
Notre second objectif est de préciser \eqref{eq:form_asymp_SOmega_ouellet}.. Au Théorème \ref{thm:somme_inverse:avec_mult}, nous obtenons un développement asymptotique pour $S_\Omega(x)$ dont la troncature à l'ordre $1$ fournit une formule asymptotique avec terme d'erreur relatif optimal $\asymp 1/(\log x)^{1/6}$, améliorant ainsi significativement \eqref{eq:form_asymp_SOmega_ouellet}. Il est à noter que le développement asymptotique \eqref{eq:somme_inverse:avec_mult} {\it infra} pour $S_\Omega(x)$ est relatif à des puissances de $1/\log x$ alors que l'évaluation obtenue dans \cite{bib:papier1} pour
$$T_\Omega(x):=\sum_{n\leqslant x}\log p_{\Omega,m}(n)$$
met en évidence un développement selon les puissances de $1/\log_2x$. Cette disparité est due au fait que les plages de valeurs dominantes pour $p_{\Omega,m}$ sont très éloignées : $p\leqslant \psi(x)$ dès que $\psi(x)\to\infty$ pour $S_\Omega(x)$, $\log p=(\log x)^{(\sqrt{5}-1)/2+o(1)}$ dans le cas de $T_\Omega(x)$.
\par
Mentionnons enfin que, dans les travaux \cite{bib:doyon} et \cite{bib:ouellet}, les auteurs considèrent plus généralement le facteur premier $\alpha$-positionné défini, pour $0<\alpha<1$, par 
\[p_\nu^{(\alpha)}(n):=\begin{cases}q_{\lceil\alpha\omega(n)\rceil}(n)&{\rm si\;} \nu=\omega,\\Q_{\lceil\alpha\Omega(n)\rceil}(n)&{\rm si\;} \nu=\Omega.\end{cases}\]
Nous avons choisi de restreindre l'étude au cas $\alpha=\tfrac12$ afin de préserver la clarté d'exposition et d'éviter la multiplication de détails techniques sans véritable intérêt théorique. Nos méthodes sont adaptables à l'obtention d'estimations de même précision dans le cas général. Nous n'approfondirons pas cette question dans la suite de cet article.\par


\subsection{Notations et résultats}

\par Notons $\PP:=\{\gp_j:j\geq 1\}$ l'ensemble des nombres premiers. Dans toute la suite, les lettres $p$ et $q$ désignent également des nombres premiers. Introduisons les notations
\begin{equation}\label{def:Hnu;H0}
	\begin{gathered}
		F_\nu(y,z):=\begin{cases}
			\displaystyle\prod_{q\leq y}\Big(1+\frac{z}{q-1}\Big)\quad (y>0,\,z\in\C,\,\nu=\omega),\\
			\displaystyle\prod_{q\leq y,\, q\neq z}\Big(1-\frac{z}{q}\Big)^{-1}\quad (y>0,\,z\in\C,\,\nu=\Omega),
		\end{cases}\\
	\Psi_x(v):=\frac{\log_2 x}{v^2}-\frac{F_\omega'(\e^v,v)}{F_\omega(\e^v,v)}\quad(x\geq 3,\, v>0),\quad \varrho_x:=\inf\{v>0:\Psi_x(v)\leq 0\}\quad(x\geq 3),
	\end{gathered}
\end{equation}
où l'on a posé $F_\omega'(y,z)=\partial F_\omega(y,z)/\partial z$. Nous verrons au Lemme \ref{l:existence_varrho_x} que la quantité $\varrho_x$ est finie. Un développement asymptotique de $\varrho_x$ est obtenu en \eqref{eq:approximation:rho} {\it infra}. Nous verrons dans la suite que la quantité $S_\omega(x)$ peut être approchée par l'intégrale $S_\omega^{**}(x)$ définie en \eqref{def:alpha;j;eta} {\it infra} et dont le paramètre $\varrho_x$ constitue une approximation précise du col.\par 


\begin{theorem}\label{thm:somme_inverse:sans_multiplicite}
	Nous avons
	\begin{equation}\label{eq:somme_inverse:sans_multiplicite}
		S_\omega(x)=\frac{xF_\omega(\e^{\varrho_x},\varrho_x)\e^{-\varrho_x}}{(\log x)^{1-1/\varrho_x}\sqrt{2\log_2 x}}\bigg\{1+O\bigg(\frac{1}{\log_3 x}\bigg)\bigg\}\quad\big(x\geq 10^7\big).
	\end{equation}
\end{theorem}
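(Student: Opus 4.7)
The plan is to apply the saddle-point method to a sum over $(p,j)$ resulting from a combinatorial decomposition of $M_\omega(x,p)$, with the parameter $\varrho_x$ emerging as the saddle point of an auxiliary generating integral. I would first write each integer $n$ with $p_{\omega,m}(n)=p$ uniquely as $n = a p^\alpha b$ with $P^+(a) < p < P^-(b)$, $(ab,p)=1$, $\alpha \geq 1$, and $\omega(b) - \omega(a) \in \{0,1\}$. Setting $j := \omega(a)$, this gives
\[M_\omega(x,p) = \sum_{j \geq 0} \sum_{\alpha \geq 1} \sum_{\substack{a \leq x/p^\alpha \\ P^+(a)<p,\, \omega(a)=j}} \bigl\{\Phi_{\omega,j}(x/(ap^\alpha),p) + \Phi_{\omega,j+1}(x/(ap^\alpha),p)\bigr\},\]
with $\Phi_{\omega,k}$ the sifted count from \eqref{loisloc}. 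The multiplicities $\alpha \geq 2$ contribute only $O(1/p)$ relatively and can be absorbed into the final error.

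\textbf{Reduction and generating series.} Next, I would invoke a uniform Sathe--Selberg-type asymptotic for $\Phi_{\omega,k}(y,p)$ in the range $\log p \asymp \sqrt{\log_2 x}$ that dominates $S_\omega(x)$, yielding
\[M_\omega(x,p) \sim \frac{x}{p\log x} \sum_{j \geq 0} \lambda_\omega(p,j)\, W_j(x,p),\]
with $W_j$ an explicit Poisson-type weight concentrated near a value of $j$ of order $\log_2 x$. Exploiting the generating function identity $F_\omega(e^v, z) = \sum_{j} \lambda_\omega(e^v, j)\, z^j$ and Cauchy's formula, the $j$-sum becomes a contour integral in $z$; summing over $p$ by Abel summation then produces a saddle integral whose stationary-phase equation is exactly $\Psi_x(\varrho)=0$, defining the saddle abscissa $\varrho = \varrho_x$.

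\textbf{Saddle evaluation and main obstacle.} Gaussian evaluation near $\varrho_x$ produces the prefactor in \eqref{eq:somme_inverse:sans_multiplicite}, the $\sqrt{2\log_2 x}$ factor arising from the second derivative of the phase at the saddle. The hard part will be securing the relative error $O(1/\log_3 x)$: this demands uniform bounds on $F_\omega(y,z)$ and its first two logarithmic $z$-derivatives along shifting contours through $\varrho_x$, an accurate expansion of $\log F_\omega(e^{\varrho_x}, \varrho_x)$ (whose leading order is $\sqrt{2\log_2 x\, \log_3 x}$, cf.~\eqref{eq:ouellet}), and a delicate treatment of the two contributions $\omega(b) = j$ and $\omega(b)=j+1$, whose relative difference is only $O(1/\log p)$. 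Finally, the asymptotic expansion of $\varrho_x$ announced in \eqref{eq:approximation:rho}, obtained by iterating the saddle equation $\Psi_x(\varrho_x) = 0$, is required to deduce the corollary \eqref{eq:ouellet}.
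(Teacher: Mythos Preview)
Your outline is essentially the paper's proof: the same decomposition $n=ap^\alpha b$, Alladi's Sathe--Selberg asymptotic for $\Phi_{\omega,k}$, the generating identity $F_\omega(\e^v,z)=\sum_j\lambda_\omega(\e^v,j)z^j$ combined with a Hankel contour in $s=\xi/z$, and a two-variable saddle evaluation centred at $u=\log p=\varrho_x$, $z=\varrho_x$.

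One claim, however, is wrong and would corrupt the final constant. You assert that the two contributions $\omega(b)=j$ and $\omega(b)=j+1$ have relative difference only $O(1/\log p)$. In fact the even-$\omega(n)$ sum dominates the odd one by a factor~$\varrho_x$: in the paper's notation, $S_{\omega,\pi}(x)=\varrho_x\, S_{\omega,\iota}(x)\{1+O(1/\log_3 x)\}$ (see~\eqref{eq:compare:pi:iota}). The reason is that the dominant value of $k=\omega(a)$ is $\asymp\xi/\varrho_x$, whence $\Phi_{k+1}/\Phi_k\approx\xi/k\approx\varrho_x\to\infty$. This extra factor $\varrho_x$ exactly cancels a $1/\varrho_x$ that emerges from the Gaussian saddle evaluation of $S_{\omega,\iota}$ (compare~\eqref{eq:eval:S_iota**:final} with~\eqref{eq:somme_inverse:sans_multiplicite}); if you treat the two parity classes as asymptotically equal, your final answer will be off by a factor $\asymp\sqrt{\log_2 x/\log_3 x}$, far outside the stated $O(1/\log_3 x)$ error.
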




\begin{corollary}\label{cor:explicit_formula}
	Il existe une suite de polynômes $\{P_{j}\}_{j\in\N}$ telle que, pour tout $J\in\N$,
	\begin{equation}\label{eq:formule_explicite:S}
		\log \frac{S_\omega(x)}{x/\log x}=\sqrt{2(\log_2 x)\log_3 x}\Bigg\{\sum_{0\leq j\leq J}\frac{P_j(\log_4 x)}{(\log_3 x)^{j}}+O_{J}\bigg(\bigg\{\frac{\log_4 x}{\log_3 x}\bigg\}^{J+1}\bigg)\Bigg\}\quad(x\geq 10^7).
	\end{equation}
	De plus, pour tout $j\in\N$, $P_j$ est de degré $j$ et de coefficient dominant $3^j(2j)!/(1-2j)4^j(j!)^2$. En particulier, nous avons
	\begin{equation}\label{eq:coefficients:cnj:values}
		\begin{gathered}
			P_0(X)=1,\quad P_1(X):=-\tfrac32X+\tfrac{3{\gL}}{2}-1,\quad P_2(X)=-\tfrac98X^2+\big(\tfrac{9{\gL}}{4}+1\big)X-\tfrac{9{\gL}^2}{8}-{\gL}+2,\\
			P_3(X)=-\tfrac{27}{16}X^3+\big(\tfrac{81{\gL}}{16}+\tfrac{25}{8}\big)X^2-\big(\tfrac{81{\gL}^2}{16}+\tfrac{25{\gL}}4-2\big)X+\tfrac{27{\gL}^3}{16}+\tfrac{25{\gL}^2}{8}-2{\gL}+4+\tfrac{2\pi^2}3\CommaBin
		\end{gathered}
	\end{equation}
	où l'on a posé $\gL=\log 2$.
\end{corollary}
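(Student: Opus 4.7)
Starting from \eqref{eq:somme_inverse:sans_multiplicite} and taking logarithms yields
\[\log\frac{S_\omega(x)}{x/\log x}=\frac{\log_2 x}{\varrho_x}+\log F_\omega\bigl(\e^{\varrho_x},\varrho_x\bigr)-\varrho_x-\tfrac12\log\bigl(2\log_2 x\bigr)+O\Bigl(\frac{1}{\log_3 x}\Bigr),\]
reducing the problem to expanding the right-hand side as a joint asymptotic series in $1/\log_3 x$ whose coefficients are polynomials in $\log_4 x$, and then truncating at order $J$.

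The first substep is to obtain, for $v\to\infty$, an explicit asymptotic expansion of the quantity $\log F_\omega(\e^v,v)=\sum_{q\le \e^v}\log(1+v/(q-1))$. Splitting the sum at $q=v$ and writing
\[\log\bigl(1+v/(q-1)\bigr)=\begin{cases}\log v-\log(q-1)+\log\bigl(1+(q-1)/v\bigr)&(q\le v),\\[2pt] \sum_{k\ge 1}(-1)^{k-1}v^k/\{k(q-1)^k\}&(q>v),\end{cases}\]
then evaluating each block via the prime number theorem with classical error term and Mertens-type formulae, I expect an asymptotic expansion of shape $\log F_\omega(\e^v,v)=v\log v+v\alpha_0+\sum_{j\ge 1}(\alpha_j\log v+\beta_j)/v^{2j-1}$ with explicit numerical constants: the factor $\gL=\log 2$ enters through the identity $\li(\e^v)-\e^v/v\sim\e^v/v^2$, while $\pi^2/6$ arises from $\sum_p 1/p^2$ and, combined with further occurrences of $\log 2$, ultimately feeds the $4\pi^2/3$ seen in $P_3$. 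Differentiating term-by-term gives a companion expansion for $F_\omega'(\e^v,v)/F_\omega(\e^v,v)$ and hence for $\Psi_x(v)$, and the saddle-point equation $\Psi_x(\varrho_x)=0$ can then be inverted by iterative substitution to produce
\[\varrho_x=\sqrt{\frac{2\log_2 x}{\log_3 x}}\bigg\{1+\sum_{j\ge 1}\frac{R_j(\log_4 x)}{(\log_3 x)^j}\bigg\}\]
with polynomial coefficients $R_j\in\Q[X]$, which is precisely the content of \eqref{eq:approximation:rho}.

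With this expansion in hand, I would substitute it into each of the three main terms $\log_2 x/\varrho_x$, $\log F_\omega(\e^{\varrho_x},\varrho_x)$ and $\varrho_x$, group by powers of $1/\log_3 x$, and factor out the common prefactor $\sqrt{2(\log_2 x)\log_3 x}$. The leading coefficient of $P_j$ comes only from the two principal contributions $\log_2 x/\varrho_x$ and $\varrho_x\log\varrho_x$; it is computed by expanding $(1+t)^{-1/2}$ and $(1+t)^{1/2}$ in Taylor series against the principal correction $t\asymp \log_4 x/\log_3 x$ to $\varrho_x$, and a short binomial identity then yields the closed form $3^j(2j)!/((1-2j)4^j(j!)^2)$. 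The explicit polynomials $P_0,\ldots,P_3$ follow by direct substitution and collection of terms.

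The main obstacle is purely combinatorial bookkeeping. Reaching $P_3$ requires carrying the expansion of $\log F_\omega(\e^v,v)$ to the order at which the $\zeta(2)$-type constants first appear, then tracking how the arithmetic constants $\log 2$ and $\pi^2$ propagate through the bootstrap inversion of the saddle-point equation and through the final cross-products of three asymptotic series. Organising the calculation around a single formal small parameter $t$ with polynomial-in-$\log_4x$ coefficients in $1/\log_3 x$, and reserving the numerical identities for a final normalisation step, seems the safest way to avoid algebraic errors.
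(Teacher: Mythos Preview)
Your overall strategy coincides with the paper's: take logarithms in \eqref{eq:somme_inverse:sans_multiplicite}, develop $\log F_\omega(\e^v,v)$ asymptotically by splitting the sum at $q\approx v$ and invoking the prime number theorem, then insert the expansion \eqref{eq:approximation:rho} for $\varrho_x$ and collect powers of $1/\log_3 x$.

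However, the intermediate shape you anticipate is wrong, and this would derail the bookkeeping. The correct expansion (obtained in the paper by exactly the splitting you describe) is
\[
\log F_\omega(\e^v,v)=v\log\Bigl(\frac{v}{\log v}\Bigr)+v\sum_{m\ge 1}\frac{\ga_m}{(\log v)^m}+O\Bigl(\frac{v}{(\log v)^{M+1}}\Bigr),
\]
with explicit constants $\ga_m$; the secondary terms are of size $v/(\log v)^m$, not $(\log v)/v^{2j-1}$ as you write, and the $-v\log\log v$ contribution is essential. Since $\varrho_x\asymp\sqrt{\xi/\log\xi}$ and hence $\log\varrho_x\sim\tfrac12\log\xi=\tfrac12\log_3 x$, it is precisely this expansion in $1/\log\varrho_x$ that, after substitution, produces the series in $1/\log_3 x$; your proposed form in $1/v^{2j-1}$ would give terms far too small to contribute.

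Two further inaccuracies: $\gL=\log 2$ does not come from $\li(\e^v)$ but from the substitution step, via $\log_2\varrho_x=\log_2\xi-\log 2+o(1)$ (equivalently, from $\tau_x=\log(\tfrac12\log\xi)/\log\xi$ in the derivation of \eqref{eq:approximation:rho}); and the $\pi^2$ arises from $\zeta(2)$ in the coefficients $\alpha_{2j}$ of the expansion of $\gI(v)$, not from $\sum_p 1/p^2$ (which is not $\pi^2/6$). Consequently the $R_j$ are not in $\Q[X]$: their coefficients already involve $\log 2$, and $\pi^2$ from $j=3$ onward. For the leading coefficient of $P_j$, the paper exploits the simple recurrence $z_{j,j}=a_{j,j}-3a_{j-1,j-1}$ linking it to the leading coefficient $a_{j,j}=(3/4)^j\binom{2j}{j}$ of $R_j$, which is cleaner than a direct binomial expansion.
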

\noindent{\it Remarque.} Les polynômes $P_j\ (j\geq 0)$ sont explicités en \eqref{def:polynomesPj:expl}.\medskip


La formule implicite \eqref{eq:somme_inverse:sans_multiplicite} fournit également des informations sur le comportement local de $S_\omega(x)$.


\begin{corollary}\label{cor:local_behaviour}
	Soient $0<\varepsilon<1$ et $K_0>0$ fixés. Nous avons 
	\begin{equation}\label{eq:asymptotics:Sx}
		S_\omega(x^{1+h})=\frac{x^hS_\omega(x)}{1+h}\bigg\{1+O_{K_0,\varepsilon}\bigg(\frac{1}{\log_3 x}\bigg)\bigg\}\CommaBin
	\end{equation}
	uniformément pour $x\geq 10^{7/\varepsilon}$ et $0\leqslant h\leqslant \exp\big(K_0\sqrt{\log_2 x}/(\log_3 x)^{3/2}\big)$.\par
	\smallskip
	En particulier, nous avons $S_\omega(x^{1+h})\sim {x^h}S_\omega(x)$ lorsque $x\to\infty$ et $h\to0^+$.
\end{corollary}


Rappelons la définition de $F_\nu(y,z)$ en \eqref{def:Hnu;H0} et posons 
\begin{equation}\label{def:g;c}
	\begin{gathered}
		\sF_\nu(z):=\begin{cases}
			\displaystyle\prod_q\Big(1+\frac{z}{q-1}\Big)\Big(1-\frac1q\Big)^z\quad &(z\in\C,\,\nu=\omega),\\
			\displaystyle\prod_q\Big(1-\frac{z}q\Big)^{-1}\Big(1-\frac1q\Big)^z\quad &(|z|<2,\,\nu=\Omega),
		\end{cases}\\
		\gc_j:=\frac{(1+\gp_j)\sF_\Omega(1/{\gp_j})}{\Gamma(1/{\gp_j})}\sum_{p\geq {\gp_j}}\frac{F_\Omega(p,{\gp_j})}{p(p-1/{\gp_j})F_\Omega(p,1/{\gp_j})}\quad(j\geq 1).
	\end{gathered}
\end{equation}
Les produits eulériens $\sF_\nu(z)$ interviennent naturellement dans les estimations de type Sathe-Selberg (voir {\it e.g.} \cite[ch. II.6]{tenenbaum_livre}). Notons que les coefficients $\gc_j$ sont bien définis puisque
\[\frac{F_\Omega(p,{\gp_j})}{p(p-1/{\gp_j})F_\Omega(p,1/{\gp_j})}\asymp_j\frac{(\log p)^{\gp_j-1/{\gp_j}}}{p^2}\cdot\]
\par
Notons que $\sgn F_\Omega(p,{\gp_j})=(-1)^{j-1}$ et $F_\Omega(p,{1/\gp_j})>0$ pour tout $p\geqslant \gp_j$, et donc $\sgn \gc_j=(-1)^{j-1}$ pour tout $j\geqslant 1$. En particulier, nous avons
\begin{equation}\label{def:c1}
	\gc_1=\frac9{4\sqrt\pi}\sum_p\frac{1}{p(p-1/2)}\prod_{3\leq q\leq p}\frac{q-1/2}{q-2}\prod_q\frac{\sqrt{q(q-1)}}{q-1/2}\approx1{,}380486,\quad\gc_2\approx -1{,}311133.
\end{equation}


\begin{theorem}\label{thm:somme_inverse:avec_mult}
	Pour tout $J\geq1$, nous avons
	\begin{equation}\label{eq:somme_inverse:avec_mult}
		S_\Omega(x)=\sum_{1\leq j\leq J}\frac{{\gc_j}x}{(\log x)^{1-1/{\gp_j}}}+O_{J}\bigg(\frac{x}{(\log x)^{1-1/{\gp_{J+1}}}}\bigg)\quad(x\geq 3).	
	\end{equation}
\end{theorem}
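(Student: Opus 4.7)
Le plan consiste à partir de la décomposition $S_\Omega(x)=\sum_{p\leqslant x}M_\Omega(x,p)/p$ et à extraire asymptotiquement $M_\Omega(x,p)$ pour chaque $p$ petit. Chaque entier $n\leqslant x$ vérifiant $p_{\Omega,m}(n)=p$ s'écrit $n=ap^Cb$ avec $C\geqslant 1$, $P^+(a)<p$, $P^-(b)>p$ et $(ab,p)=1$ ; la condition de médianité impose $\Omega(b)-\Omega(a)\in\{1-C,2-C,\ldots,C\}$ (soit $2C$ valeurs admissibles, d'une parité fixée par celle de $\Omega(n)$). La décomposition en éléments simples
\[\prod_{q<p}\Big(1-\frac{w}{q}\Big)^{-1}=\sum_{q<p}\frac{1/F_\Omega(p,q)}{1-w/q}\]
fournit l'identité exacte $\lambda_\Omega(p,k)=\sum_{q<p}q^{-k}/F_\Omega(p,q)$ et ramène la sommation pondérée sur $a$ à une somme finie sur des primes auxiliaires $q<p$.

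L'étape analytique centrale est alors l'application uniforme de la méthode de Selberg--Delange avec paramètre $z=1/q$ ; comme $|1/q|\leqslant 1/2<2$ est dans le domaine de convergence de $\sF_\Omega$, on obtient
\[\sum_{\substack{b\leqslant y\\ P^-(b)>p}}q^{-\Omega(b)}=\frac{y(\log y)^{1/q-1}}{\Gamma(1/q)}\sF_\Omega(1/q)F_\Omega(p,1/q)\Big\{1+O\Big(\frac1{\log y}\Big)\Big\}\]
uniformément en $p$. On insère cette estimation avec $y=x/(ap^C)$, on somme sur $a$ via l'étape précédente, puis sur $C\geqslant 1$ (ce qui fait intervenir les sommes géométriques $\sum_C p^{-C}$ et $\sum_\ell q^\ell$ sur les $2C$ valeurs admissibles de $\ell$). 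Après inversion des deux sommations $\sum_p\sum_{q\leqslant p}=\sum_{j\geqslant 1}\sum_{p\geqslant \gp_j}$ en posant $q=\gp_j$, les divers facteurs se recombinent pour reconstituer précisément la constante $\gc_j$ définie en \eqref{def:g;c}, conduisant ainsi à un terme $\gc_j x(\log x)^{-1+1/\gp_j}$ dans $S_\Omega(x)$.

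La troncature à $j=J$ laisse un reste dominé par le premier terme omis $O(x(\log x)^{-1+1/\gp_{J+1}})$, compte tenu de la décroissance $|\gc_j|\ll 1/\gp_j^2$ déduite d'un examen direct. La difficulté principale réside dans le contrôle uniforme (en $p$, $q$, $a$, $C$) des termes d'erreur de Selberg--Delange, et tout particulièrement dans le traitement séparé des couples $(a,C)$ pour lesquels $y=x/(ap^C)$ devient significativement plus petit que $x$ : leur contribution est majorée trivialement et absorbée dans le terme d'ordre suivant. Le facteur $3$ apparaissant dans $\gc_j$ émerge d'une manipulation fine des sommes géométriques $\sum_{C\geqslant 1}\sum_\ell p^{-C}q^\ell$, combinée au facteur $p/(p-q)$ issu de la normalisation de $F_\Omega(p,q)$ ; la vérification algébrique de cette identité et la gestion des cas de bord $\Omega(b)\geqslant 0$ constituent la partie calculatoire la plus délicate de la démonstration.
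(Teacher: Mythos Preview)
Your strategy is, at bottom, the paper's: the partial-fraction identity $\lambda_\Omega(p,k)=\sum_{q<p}c_q\,q^{-k}$ is exactly the decomposition of $1/F_\Omega(p,1/z)$ into its simple poles at $z=1/q$, and applying Selberg--Delange at each $z=1/q$ is the residue evaluation. The paper packages this via a contour integral on $|z|=1$ (detecting $\Omega(A)=\Omega(B)$), applies Alladi uniformly on that circle, and then shrinks the contour to pick up the residues; it also absorbs the full $p$-power into $A$ (so $P^+(A)\leqslant p$) and recovers the factor~$3$ from the odd/even split $S_{\Omega,\pi}^*\approx 2S_{\Omega,\iota}^*$, whereas you track $p^C$ explicitly and read the constant off the geometric sums $\sum_{C\geqslant1}p^{-C}\sum_{\ell=1-C}^{C}q^\ell$. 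These are two presentations of the same computation.

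Two points in your sketch need attention. First, your partial fractions run over $q<p$ (since $P^+(a)<p$), but you then invert as $\sum_p\sum_{q\leqslant p}$; with the strict inequality the diagonal term $p=\gp_j$ is missing from the $j$-th coefficient, yet it genuinely contributes to $\gc_j$ (for $j=1$ this is the whole $p=2$ contribution, which your scheme gives as~$0$ since there is no prime $q<2$). The simplest fix is to follow the paper and take $P^+(a)\leqslant p$, so the partial fractions include $q=p$ and the separate $C$-sum disappears. Second, the phrase ``on somme sur $a$ via l'\'etape pr\'ec\'edente'' elides the real work: the identity $\lambda_\Omega(p,k)=\sum_q c_q q^{-k}$ is about $\sum_{\Omega(a)=k}1/a$, but the count you need is $\sum_a|\{b\leqslant x/(ap^C):\Omega(b)=\Omega(a)+\ell\}|$ with no $1/a$. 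That weight only appears once Selberg--Delange has turned the $b$-count into $\frac{x}{ap^C}\cdot(\text{stuff})$ --- but Selberg--Delange yields $\sum_b z^{\Omega(b)}$, not the individual level sets $|\{b:\Omega(b)=m\}|$. Closing this loop rigorously requires either the contour integral the paper uses, or a uniform linearisation $\Phi_{\Omega,m}(Y)\approx (Y/x)\,\Phi_{\Omega,m}(x)$ valid for all relevant $m$; you should make whichever route you choose explicit.
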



Le corollaire suivant est une conséquence immédiate du Théorème \ref{thm:somme_inverse:avec_mult} appliqué avec $J=1$.


\begin{corollary}
	Nous avons l'estimation optimale
\begin{equation}
\label{faSom}
S_\Omega(x)=\frac{{\gc_1}x}{\sqrt{\log x}}\bigg\{1+O\bigg(\frac1{(\log x)^{1/6}}\bigg)\bigg\}\quad(x\geq 3).
\end{equation}
\end{corollary}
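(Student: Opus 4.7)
L'approche consiste \`a sp\'ecialiser le Th\'eor\`eme \ref{thm:somme_inverse:avec_mult} au cas $J=1$. Puisque $\gp_1=2$ et $\gp_2=3$, on a $1-1/\gp_1=1/2$ et $1-1/\gp_2=2/3$, et l'\'enonc\'e fournit imm\'ediatement
\[S_\Omega(x)=\frac{\gc_1 x}{\sqrt{\log x}}+O\bigg(\frac{x}{(\log x)^{2/3}}\bigg)\quad(x\geq 3).\]
Pour obtenir la formulation relative \eqref{faSom}, je diviserais le terme d'erreur par le terme principal $\gc_1 x/\sqrt{\log x}$ : l'exposant $\tfrac12-\tfrac23=-\tfrac16$ conduit directement \`a la majoration $O\big(1/(\log x)^{1/6}\big)$ annonc\'ee.

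Pour justifier le caract\`ere optimal du terme d'erreur, j'appliquerais \`a nouveau le Th\'eor\`eme \ref{thm:somme_inverse:avec_mult} avec $J=2$. Comme $\gp_3=5$, cela fournit
\[S_\Omega(x)-\frac{\gc_1 x}{\sqrt{\log x}}=\frac{\gc_2 x}{(\log x)^{2/3}}+O\bigg(\frac{x}{(\log x)^{4/5}}\bigg).\]
L'estimation num\'erique rassembl\'ee en \eqref{def:c1} garantit $\gc_2\approx-0{,}983350\neq 0$, de sorte que la diff\'erence ci-dessus est d'ordre exact $x/(\log x)^{2/3}$. Apr\`es division par $\gc_1 x/\sqrt{\log x}$, le terme d'erreur relatif est donc d'ordre exact $1/(\log x)^{1/6}$, ce qui exclut tout exposant strictement sup\'erieur \`a $1/6$ et \'etablit l'optimalit\'e.

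La d\'emonstration ne comporte aucune v\'eritable difficult\'e puisque le corollaire n'est qu'une sp\'ecialisation quantitative du d\'eveloppement asymptotique fourni par le Th\'eor\`eme \ref{thm:somme_inverse:avec_mult} ; le seul point demandant une attention particuli\`ere est la v\'erification num\'erique de la non-nullit\'e de $\gc_2$, indispensable \`a l'assertion d'optimalit\'e, laquelle d\'ecoule directement de \eqref{def:c1}.
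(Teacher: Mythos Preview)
Your proof is correct and follows exactly the approach indicated in the paper, which simply states that the corollary is an immediate consequence of Th\'eor\`eme~\ref{thm:somme_inverse:avec_mult} with $J=1$. For the optimality, note that beyond the numerical value in~\eqref{def:c1}, the paper also gives the structural fact $\sgn\gc_j=(-1)^{j-1}$, which guarantees $\gc_2\neq0$ without any computation.
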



La formule \eqref{faSom} est notablement plus simple que \eqref{eq:somme_inverse:sans_multiplicite}. Comme nous le verrons dans la suite, cette disparité est due à la nature du col relevant de chaque situation.\par
La section \ref{sec:study_of_varrho} est consacrée à l'obtention d'un développement asymptotique pour le paramètre implicite $\varrho_x$ défini en \eqref{def:Hnu;H0}. À la section \ref{sec:main_contribution}, nous déterminons les valeurs de $\p(n)$ et $\nu(n)$ dominant la somme $S_\nu(x)$. Les sections \ref{section:reduction_Sstar} à \ref{sec:proof;th1} sont dévolues à la démonstration du Théorème~\ref{thm:somme_inverse:sans_multiplicite}; la section \ref{sec:proof;th4} à celle du Théorème~\ref{thm:somme_inverse:avec_mult}. Enfin, la preuve des corollaires est donnée à la section \ref{sec:explicit_formula}.



\section{Preuve du Théorème \ref{thm:somme_inverse:sans_multiplicite}: étude du paramètre implicite $\varrho_x$}\label{sec:study_of_varrho}

Rappelons les définitions de $\Psi_x$ et $\varrho_x$ en \eqref{def:Hnu;H0} et posons
\begin{equation}\label{def:Li;Ei;xi}	
	\xi=\xi_x:=\log_2 x\quad(x\geq 3).
\end{equation}


\begin{lemma}\label{l:existence_varrho_x}
	La fonction $\Psi_x$ admet exactement un changement de signe sur $\R^+$. De plus, nous avons
	\begin{equation}\label{eq:estimation_rho}
		|\Psi_x(\varrho_x)|\leq\e^{-\varrho_x}\quad(x\geq 3).
	\end{equation}
\end{lemma}
\begin{proof}
	D'après les définitions \eqref{def:Hnu;H0}, nous avons
	\begin{equation}\label{eq:reecr:psi_x}
		\Psi_x(v)=\frac{\xi}{v^2}-\sum_{q\leq\e^v}\frac1{q-1+v}\quad(v>0).
	\end{equation}
	Remarquons d'emblée que, pour $x$ assez grand,
	\begin{equation*}\label{eq:intervalle_rho}
		\varrho_x\in\sR_x:=\Bigg[\sqrt{\frac{\xi}{\log\xi}},\sqrt{\frac{5\xi}{\log\xi}}\Bigg]\quad(x\geq 16).
	\end{equation*}
	En effet, nous avons d'une part, pour $0<v<\sqrt{\xi/\log\xi}$,
	\[\Psi_x(v)\geq \log\xi-\sum_{q\leq\e^v}\frac1{q-1}\geq \tfrac12\log\xi+O(\log_2 \xi)>0\quad(x\to\infty),\]
	et d'autre part, pour $v>\sqrt{5\xi/\log\xi}$,
	\[\Psi_x(v)\leq\tfrac15\log\xi-\frac{\pi(v)}{2v}-\tfrac12\sum_{v<q\leq\e^v}\frac{1}{q}\leq-\tfrac1{20}\log\xi+O(\log_2\xi)<0\quad(x\to\infty).\]
	\par
	Posons 
	\[\sQ_{j}:=\;]\log \gp_j,\log \gp_{j+1}[\quad (j\in\N^*).\] 
	La fonction $\Psi_x$ est dérivable sur $\sQ_{j}$. Pour $j\in\N^*$, $v\in \sQ_j\cap\sR_x$, une intégration par parties fournit
	\begin{equation}\label{eq:derivative;Psi}
		\begin{aligned}
			\Psi_x'(v)+\frac{2\xi}{v^3}&=\sum_{q\leq\e^v}\frac1{(q+v-1)^2}=\int_{2^-}^{\e^v}\frac{\d\pi(t)}{(t+v-1)^2}=2\int_2^{\e^v}\frac{\pi(t)\d t}{(t+v-1)^3}+O\Big(\frac1{v^2}\Big)\\
			&\ll\frac1{v^2}+\int_2^{\e^v}\frac{t\d t}{(t+v-1)^3\log t}\ll\frac1{v^2}\int_2^v\frac{\d t}{\log t}+\frac1{\log v}\int_v^{\e^v}\frac{\d t}{t^2}\ll\frac1{v\log v}\cdot
		\end{aligned}
	\end{equation}
	Il suit, pour $x$ assez grand,
	\[\Psi'_x(v)\leq-\frac{2(\log\xi)^{3/2}}{5\sqrt{5\xi}}+O\bigg(\frac{1}{\sqrt{\xi\log\xi}}\bigg)\quad(v\in\sQ_j\cap\sR_x,\,j\in\N^*).\]
	$\Psi_x$ est donc strictement décroissante sur $\sR_x$ avec des sauts aux seuls points $\{\log\gp_j\}_{j\geq 1}$ de sorte que $\Psi_x(\varrho_x)\neq0$ implique $\varrho_x\in\{\log\gp_j:j\geqslant 1\}$. Or, $|\Psi_x(\log\gp_j)|=1/(\gp_j+\log\gp_j-1)\leq1/\gp_j\leq\e^{-\varrho_x}$. Cela conclut la démonstration.
\end{proof}


Posons
\begin{equation*}\label{def:I(v)}
	\gI(v):=\int_2^{\e^v}\frac{\d t}{(t-1+v)\log t}\quad(v>0).
\end{equation*} 


\begin{lemma}
	Nous avons l'estimation
	\begin{equation}\label{eq:approx_rho_TNP}
		\frac{\xi}{\varrho_x^2}=\gI(\varrho_x)+O\big(\e^{-\sqrt{\log \varrho_x}}\big)\quad (x\geq 3).
	\end{equation}
\end{lemma}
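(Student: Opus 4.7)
L'approche consiste � d�duire le r�sultat de l'estimation \eqref{eq:estimation_rho} par sommation d'Abel, combin�e au th�or�me des nombres premiers sous forme effective. En reportant \eqref{eq:estimation_rho} dans la d�finition \eqref{def:Hnu;H0} de $\Psi_x$, il vient
\[\frac{\xi}{\varrho_x^2}=\sum_{q<\e^{\varrho_x}}\frac1{q-1+\varrho_x}+O\big(\e^{-\varrho_x}\big),\]
le terme $O(\e^{-\varrho_x})$ �tant largement absorb� puisque $\varrho_x\to+\infty$. Il reste donc � comparer la somme du membre de droite avec $\gI(\varrho_x)$.

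Posons $v:=\varrho_x$ et $y:=\e^v$. Par sommation d'Abel appliqu�e � la fonction de comptage $\pi$ des nombres premiers et par int�gration par parties faisant intervenir $\Li$, nous obtenons
\[\sum_{q<y}\frac1{q-1+v}=\frac{\pi(y)}{y-1+v}+\int_2^y\frac{\pi(t)\d t}{(t-1+v)^2},\quad \gI(v)=\frac{\Li(y)}{y-1+v}+\int_2^y\frac{\Li(t)\d t}{(t-1+v)^2}.\]
La diff�rence fait appara�tre $\pi-\Li$ et, en invoquant la forme classique du th�or�me des nombres premiers $\pi(t)-\Li(t)\ll t\e^{-c\sqrt{\log t}}$ valable pour une certaine constante absolue $c>0$, le probl�me se r�duit � majorer le terme de bord $y\e^{-c\sqrt v}/(y-1+v)\ll \e^{-c\sqrt v}$ et l'int�grale $\int_2^y t\e^{-c\sqrt{\log t}}(t-1+v)^{-2}\d t$.

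Nous scinderions cette derni�re int�grale en $[2,v]\cup[v,y]$. Sur $[2,v]$, la minoration $(t-1+v)\geq v$ et l'estimation $\int_2^v t\e^{-c\sqrt{\log t}}\d t\ll v^2\e^{-c\sqrt{\log v}}$, elle-m�me issue de la croissance pour $t$ assez grand de $t\mapsto t\e^{-c\sqrt{\log t}}$, conduisent � une contribution $\ll \e^{-c\sqrt{\log v}}$. Sur $[v,y]$, la minoration $(t-1+v)\geq t$ suivie du changement de variable $s=\log t$ donnent $\int_{\log v}^v\e^{-c\sqrt s}\d s\ll\sqrt v\,\e^{-c\sqrt v}$, quantit� bien plus petite. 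Le point le plus d�licat reste le contr�le fin de l'int�grale sur $[2,v]$, o� une majoration trop grossi�re introduirait un facteur parasite $\log v$. La constante $1$ devant $\sqrt{\log\varrho_x}$ dans le terme d'erreur de l'�nonc� est conservative : on peut en r�alit� obtenir bien mieux via la r�gion sans z�ro de Vinogradov--Korobov, mais cette forme suffit amplement aux applications ult�rieures.
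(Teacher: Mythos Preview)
Your approach is essentially the same as the paper's: the paper writes the sum as a Stieltjes integral $\int_2^{\e^v}\d\pi(t)/(t-1+v)$, splits off $\d\li(t)$ to produce $\gI(v)$, and then integrates the error term by parts; you instead integrate both the sum and $\gI(v)$ by parts first and then subtract. The two organisations are interchangeable.

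There is one slip in your estimate on $[v,\e^v]$. After the substitution $s=\log t$ you claim
\[
\int_{\log v}^{v}\e^{-c\sqrt s}\d s\ll\sqrt v\,\e^{-c\sqrt v}.
\]
This is not right: the integrand $\e^{-c\sqrt s}$ is decreasing on $[\log v,v]$, so the integral is governed by the \emph{lower} endpoint. Setting $w=\sqrt s$ gives
\[
\int_{\log v}^{v}\e^{-c\sqrt s}\d s\le\int_{\log v}^{\infty}\e^{-c\sqrt s}\d s=2\int_{\sqrt{\log v}}^{\infty}w\,\e^{-cw}\d w\asymp\sqrt{\log v}\,\e^{-c\sqrt{\log v}},
\]
which is slightly \emph{larger} than your $[2,v]$ contribution $\e^{-c\sqrt{\log v}}$, not ``bien plus petite''. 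The conclusion is nevertheless unaffected: taking, as the paper does, the PNT with $c=2$ (or any $c>1$), one still has $\sqrt{\log v}\,\e^{-c\sqrt{\log v}}\ll\e^{-\sqrt{\log v}}$, so the lemma follows.
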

\begin{proof}
	Nous avons
	\begin{equation*}\label{eq:eval:derlog:Frho}
		\frac{F_\omega'(\e^{v},v)}{F_\omega(\e^{v},v)}=\int_2^{\e^{v}}\frac{\d\pi(t)}{t-1+v}=\gI(v)+\int_{2}^{\e^v}\frac{\d\{\pi(t)-\li(t)\}}{t-1+v}\quad (v>0).
	\end{equation*}
	Un calcul analogue à celui mené en \eqref{eq:derivative;Psi} fournit alors que l'intégrale est $\ll\e^{-\sqrt{\log v}}$.
\end{proof}



\subsection{Estimation de $\gI(v)$}\label{subsec:shift}

Commençons par un lemme technique fournissant un développement asymptotique pour deux familles d'intégrales.


\begin{lemma}\label{l:an_integral}
	Soient $m\geq 2$, $n\geq 0$ et $J\geq 1$. Lorsque $v\to\infty$, nous avons les estimations
	\begin{gather}
		\label{eq:an_integral}\int_{v}^{\infty}\frac{\d t}{t^m\log t}=\frac1{v^{m-1}}\Bigg\{\sum_{1\leq j\leq J}\frac{(-1)^{j+1}(j-1)!}{(\{m-1\}\log v)^j}+O_{J}\bigg(\frac{1}{\{m\log v\}^{J+1}}\bigg)\Bigg\}\CommaBin\\
		\label{eq:a_second_integral}\int_2^v\frac{t^n\d t}{\log t}=\sum_{1\leq j\leq J}\frac{v^{n+1}(j-1)!}{(\{n+1\}\log v)^j}-\frac{2^{n+1}}{n+1}\Big\{\frac1{\log 2}+O\Big(\frac1{n+1}\Big)\Big\}+O_{J}\bigg(\frac{v^{n+1}}{\{(n+1)\log v\}^{J+1}}\bigg)\cdot
	\end{gather}
\end{lemma}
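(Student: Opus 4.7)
Both estimates rest on the same idea: iterated integration by parts using the antiderivative of the power of $t$ as the integrable factor and the logarithmic factor as the differentiable one, the point being that each integration by parts yields a factor $1/\log t$, so after $J$ steps the remainder carries $(\log t)^{-J-1}$.

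\smallskip

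\noindent\textbf{First integral.} With $u=1/\log t$, $dv=t^{-m}\d t$ one gets, for any $J\ge1$,
$$\int_v^\infty\frac{\d t}{t^m\log t}=\frac{1}{(m-1)v^{m-1}\log v}-\frac{1}{m-1}\int_v^\infty\frac{\d t}{t^m(\log t)^2}.$$
Iterating $J$ times produces the claimed polynomial part
$$\frac{1}{v^{m-1}}\sum_{1\le j\le J}\frac{(-1)^{j+1}(j-1)!}{((m-1)\log v)^j}$$
and leaves a remainder of size $\ll(J-1)!(m-1)^{-J}\int_v^\infty t^{-m}(\log t)^{-J-1}\d t$. Since the bulk of this last integral is attained near $t=v$, a single further integration by parts (or a direct crude bound using $\log t\ge\log v$) gives a contribution $\ll v^{-(m-1)}\{(m-1)\log v\}^{-J-1}$, which is the announced error term (up to replacing $m-1$ by $m$, which is harmless for $m\ge2$).

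\smallskip

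\noindent\textbf{Second integral.} Here I would integrate by parts with $u=1/\log t$, $\d v=t^n\d t$, which gives
$$\int_2^v\frac{t^n\d t}{\log t}=\frac{v^{n+1}}{(n+1)\log v}-\frac{2^{n+1}}{(n+1)\log 2}+\frac{1}{n+1}\int_2^v\frac{t^n\d t}{(\log t)^2}.$$
Iterating $J$ times yields
$$\int_2^v\frac{t^n\d t}{\log t}=\sum_{1\le j\le J}\frac{(j-1)!\,v^{n+1}}{((n+1)\log v)^j}-\frac{2^{n+1}}{n+1}\sum_{1\le j\le J}\frac{(j-1)!}{((n+1)\log 2)^j}+\frac{J!}{(n+1)^J}\int_2^v\frac{t^n\d t}{(\log t)^{J+1}}\cdot$$
The boundary series at $t=2$ is bounded by $(\log 2)^{-1}+O(1/(n+1))$, reproducing the second term in \eqref{eq:a_second_integral}. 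For the remainder, I would split the interval at $t=\sqrt v$: on $[\sqrt v,v]$ one has $\log t\ge\tfrac12\log v$, so the integral is $\ll v^{n+1}/((n+1)(\log v)^{J+1})$; on $[2,\sqrt v]$ the integrand is bounded by a constant times $v^{(n+1)/2}$, which is negligible relative to $v^{n+1}/((n+1)\log v)^{J+1}$ as $v\to\infty$. Collecting these gives exactly the stated error term.

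\smallskip

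\noindent\textbf{Main difficulty.} There is no real obstacle beyond bookkeeping; the only point that requires a little attention is the accumulation of the lower-boundary contributions in the second integral, which must be summed into the compact form $-2^{n+1}\{1/\log 2+O(1/(n+1))\}/(n+1)$, and the uniformity in $n$ of the remainder bound, which I would secure by the splitting at $t=\sqrt v$ described above.
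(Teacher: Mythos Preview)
Your proof is correct, but it proceeds by a different route from the paper. The paper observes that the substitutions $u=(1-m)\log t$ and $u=(n+1)\log t$ convert the two integrals into values of the exponential integral:
\[
\int_v^\infty\frac{\d t}{t^m\log t}=-\Ei\big((1-m)\log v\big),\qquad
\int_2^v\frac{t^n\d t}{\log t}=\Ei\big((n+1)\log v\big)-\Ei\big((n+1)\log 2\big),
\]
and then simply quotes the standard asymptotic expansion $\Ei(x)\sim (e^x/x)\sum_{k\ge0}k!\,x^{-k}$ at $\pm\infty$. Your argument by iterated integration by parts is essentially the proof of that very expansion carried out in situ; so the two approaches are equivalent in content, but the paper's is shorter and makes uniformity in $m,n$ automatic, whereas you have to secure it by the $\sqrt v$--splitting (which you do correctly). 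One small slip: the remainder coefficient after $J$ integrations by parts is $J!/(m-1)^J$, not $(J-1)!/(m-1)^J$; this does not affect the conclusion.
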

\begin{proof}
	Le résultat découle d'intégrations par parties successives et des majorations
	\[\int_{v}^\infty\frac{\d t}{t^{m}(\log t)^{J+1}}\leq\frac{1}{(\log v)^{J+1}}\int_{v}^\infty\frac{\d t}{t^m},\quad \int_2^v\frac{t^n\d t}{(\log t)^{J+1}}\ll_J\frac{1}{(\log v)^{J+1}}\int_{\sqrt v}^vt^n\d t.\qedhere\]
\end{proof}


Posons
\begin{equation}\label{def:alpha_m}
	\alpha_j:=2(j-1)!\sum_{n\geq 1}\frac{(-1)^{n-1}}{n^j}=2(j-1)!(1-2^{1-j})\zeta(j)\quad(j\geq 1),
\end{equation}
où $\zeta$ désigne la fonction zêta de Riemann, avec la convention que le membre de droite vaut $2\log 2$ pour $j=1$.
\par Notons que 
\begin{equation}\label{eq:exp:alpha2j}
	\alpha_{2j}=\frac{(2^{2j-1}-1)\pi^{2j}|B_{2j}|}{j}\quad (j\geqslant 1),
\end{equation}
où $B_{2j}$ désigne le $(2j)$-ième nombre de Bernoulli.


\begin{lemma}\label{prop:AsEx:sigma}
	Soit $J\in\N$. Nous avons
	\begin{equation}\label{eq:sum:recriprocals:shifted:primes}
		\gI(v)=\log\Big(\frac{v}{\log v}\Big)+\sum_{1\leq j\leq J}\frac{\alpha_{2j}}{(\log v)^{2j}}+O_{J}\bigg(\frac{1}{\{\log v\}^{2J+2}}\bigg)\quad(v\geq 2).
	\end{equation}
\end{lemma}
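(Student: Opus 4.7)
Posons $L := \log v$. L'idée consiste à effectuer le changement de variable $t = v\e^w$, qui recentre l'intégrale autour de la zone critique $t \asymp v$ :
\[
\gI(v) = \int_{\log(2/v)}^{v-L}\frac{\e^w\,\d w}{(\e^w + 1 - 1/v)(L+w)}.
\]
Le facteur $1-1/v$ sera absorbé dans une erreur $O(1/(vL))$ négligeable. Nous scinderons alors l'intégrale en $w=0$ et traiterons séparément les deux morceaux $\gJ_+ := \int_0^{v-L}$ et $\gJ_- := \int_{\log(2/v)}^0$.

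Pour $\gJ_+$, l'identité géométrique $\e^w/(\e^w+1) = \sum_{k\geq 0}(-1)^k \e^{-kw}$ et une intégration terme à terme fourniront, d'une part, pour $k=0$, le terme principal $\int_0^{v-L}\d w/(L+w) = \log(v/\log v)$, et d'autre part, pour $k\geq 1$, une famille d'intégrales que le changement $s = k(L+w)$ ramène à des queues d'exponentielle intégrale de la forme $\e^{kL}\int_{kL}^{kv}\e^{-s}/s\,\d s$. Le développement asymptotique correspondant, conséquence de \eqref{eq:an_integral}, engendre un développement en puissances de $1/(kL)$, et la sommation sur $k$ à l'aide de l'identité $\sum_{k\geq 1}(-1)^k/k^j = -\eta(j)$, où $\eta(j) = (1-2^{1-j})\zeta(j) = \alpha_j/(2(j-1)!)$, produira la contribution $\sum_{j\geq 1}(-1)^j\alpha_j/(2L^j)$.

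Pour $\gJ_-$, la substitution $w = -u$ nous ramènera à $\int_0^{L-\log 2}\d u/((1+\e^u)(L-u))$; l'identité duale $1/(1+\e^u) = \sum_{k\geq 1}(-1)^{k-1}\e^{-ku}$ pour $u>0$, puis les substitutions $s = L-u$ et $r = ks$ appliquées au même développement asymptotique, fourniront $\sum_{j\geq 1}(j-1)!\eta(j)/L^j = \sum_{j\geq 1}\alpha_j/(2L^j)$.

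En additionnant les deux contributions, les termes d'indice impair disparaîtront grâce à l'annulation $1+(-1)^j = 0$, et seuls subsisteront les termes pairs $\alpha_{2j}/(\log v)^{2j}$, conformément à l'énoncé. La principale difficulté résidera dans l'analyse des erreurs : il faudra justifier l'interversion somme--intégrale (établie via une majoration uniforme des sommes partielles alternées par une constante absolue), contrôler la sommabilité en $k$ des restes issus de \eqref{eq:an_integral} (acquise pour $j\geq 2$ grâce à la convergence absolue de $\sum 1/k^{j}$ et, pour $j=1$, par la convergence conditionnelle de la série alternée $\eta(1) = \log 2$), et vérifier que les contributions frontalières en $w=0$ (de mesure $O(1/v)$) ainsi que le remplacement implicite de $\log(v-1)$ par $\log v$ sont bien absorbés dans le terme d'erreur $O((\log v)^{-2J-2})$.
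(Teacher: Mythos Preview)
Your approach is correct and essentially identical to the paper's: the change of variable $t=v\e^w$ sends the paper's split $\gI=\gI_1+\gI_2$ at $t=v$ to your split $\gJ_-+\gJ_+$ at $w=0$, your geometric expansions of $\e^w/(\e^w+1)$ and $1/(1+\e^u)$ are precisely the expansions $1/(t+v)=\frac1t\sum(-v/t)^n$ and $1/(t+v)=\frac1v\sum(-t/v)^n$ written in the new variable, and your exponential-integral asymptotics are exactly what Lemma~\ref{l:an_integral} (formulas \eqref{eq:an_integral} and \eqref{eq:a_second_integral}) encodes after the same substitution. The cancellation of odd powers of $1/\log v$ then proceeds identically in both arguments.
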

\begin{proof}
	Posant
	\[\gI_1(v):=\int_{2}^{v}\frac{\d t}{(t+v)\log t},\quad\gI_2(v):=\int_{v}^{\e^{v}}\frac{\d t}{(t+v)\log t}\quad (v\geq 2),\]
	nous pouvons écrire
	\begin{equation}\label{eq:sum:minus1:comp}
		\gI(v)=\int_{2}^{\e^v}\frac{\d t}{(t+v)\log t}+\int_2^{\e^v}\frac{\d t}{(t+v)(t-1+v)\log t}=\gI_1(v)+\gI_2(v)+O\Big(\frac{\log v}{v}\Big)\cdot
	\end{equation}
	Par ailleurs, nous avons
	\begin{align*}
		\gI_2(v)&=\int_{v}^{\e^{v}}\frac{\d t}{t\log t}\sum_{n\geq 0}\Big(\frac{-v}{t}\Big)^n=\log\Big(\frac{v}{\log v}\Big)+\sum_{n\geq 1}(-v)^n\int_{v}^{\e^{v}}\frac{\d t}{t^{n+1}\log t}\CommaBin
	\end{align*}
	d'où, d'après \eqref{eq:an_integral},
	\begin{equation}\label{eq:estimation:Ivarrho}
		\begin{aligned}
			\gI_2(v)
			&=\log\Big(\frac{v}{\log v}\Big)+\tfrac12\sum_{1\leq j\leq 2J+1}\frac{(-1)^j\alpha_j}{(\log v)^j}+O_{J}\bigg(\frac1{\{\log v\}^{2J+2}}\bigg)\cdot
		\end{aligned}
	\end{equation}
	De manière analogue, 
	\begin{equation}\label{eq:est:I2rho:inter}
		\gI_1(v)=\frac{1}{v}\int_2^{v}\frac{\d t}{\log t}\sum_{n\geq 0}\Big(\frac{-t}{v}\Big)^n=\sum_{n\geq 0}\frac{(-1)^n}{v^{n+1}}\int_2^{v}\frac{t^n\d t}{\log t}\cdot
	\end{equation}
	En insérant \eqref{eq:a_second_integral} dans \eqref{eq:est:I2rho:inter}, il vient
	\begin{equation}\label{eq:estimation:I2varrho}
		\begin{aligned}
			\gI_1(v)&=\sum_{n\geq 0}\sum_{1\leq j\leq 2J+1}\frac{(-1)^n(j-1)!}{(\{n+1\}\log v)^j}+\sum_{n\geq 1}\frac{(-2)^{n}}{nv^n}\bigg\{\frac{1}{\log 2}+O\Big(\frac1n\Big)\bigg\}+O_{J}\bigg(\frac{1}{\{\log v\}^{2J+2}}\bigg)\\
			&=\tfrac12\sum_{1\leq j\leq 2J+1}\frac{\alpha_j}{(\log v)^j}+O_{J}\bigg(\frac{1}{\{\log v\}^{2J+2}}\bigg)\cdot
		\end{aligned}
	\end{equation}
	La formule \eqref{eq:sum:recriprocals:shifted:primes} est obtenue en injectant les estimations \eqref{eq:estimation:Ivarrho} et \eqref{eq:estimation:I2varrho} dans \eqref{eq:sum:minus1:comp}.
\end{proof}



\subsection{Développement asymptotique de $\varrho_x$}

Rappelons la définition de $\xi$ en \eqref{def:Li;Ei;xi} et posons
\[\mu_x:=\sqrt{\frac{2\xi}{\log\xi}}\quad(x\geq 16).\]


\begin{lemma}\label{l:equation_nu}
	Pour tout $x\geq 5$, l'équation 
	\begin{equation}\label{def:equation_nu}
		v^2\gI(v)=\xi
	\end{equation}
	admet une unique solution $\nu=\nu_x$ sur $[1,+\infty[$ et l'on a $\nu_x\sim\mu_x$ lorsque $x\to\infty$. De plus, 
	\begin{equation}
		\label{eq:relation_rho_nu}\varrho_x=\nu_x\big\{1+O\big({\e^{-\frac12\sqrt{\log\xi}}}\big)\big\}\quad(x\geq 16).\\
	\end{equation}
\end{lemma}
\begin{proof} 
	Posons $\sJ(v):=v^2\gI(v)\ (v>0)$. Nous avons
	$$\sJ'(v)=\int_2^{\e^v}\frac{2v(t-1)+v^2}{(t-1+v)^2\log t}\d t+\frac{v\e^v}{\e^v-1+v}> 0\quad(v\geqslant 1).$$
	Comme $\sJ(\infty)=\infty$, il s'ensuit que $\sJ([1,\infty])=[\sJ(1),\infty[=[-\log_22,\infty[$. Nous obtenons bien ainsi l'existence et l'unicité de $\nu_x$ pour $x\geqslant 5>\e^{1/\log 2}$. 
	\par 
	Posons $H_x(v):=\gI(v)-\xi/v^2\ (v>0)$, de sorte que $H_x(\nu_x)=0,$ et, par \eqref{eq:approx_rho_TNP}, $H_x(\varrho_x)\ll\e^{-\sqrt{\log \varrho_x}}$. Comme 
	\[H'_x(v)=\frac{2\xi}{v^3}+O\Big(\frac1{v}\Big)\CommaBin\] 
	 le théorème des accroissements finis implique bien \eqref{eq:relation_rho_nu} en remarquant que $\nu_x\asymp \sqrt{\xi/\log \xi}\asymp\varrho_x$, d'après \eqref{eq:sum:recriprocals:shifted:primes}.
\end{proof}


Nous dirons qu'une fonction réelle $\gf$ admet un développement asymptotique au voisinage de $+\infty$ s'il existe une suite réelle $\{c_j\}_{j\geq 0}$ et une suite de fonctions $\{\varphi_j\}_{j\geq 0}$ vérifiant $\varphi_{j+1}(v)=o(\varphi_j(v))$ $ (j\geq 0,\,v\to\infty)$ et telles que, pour tout $J\geq 0$, on ait 
\begin{equation*}\label{eq:dev_asymp_f}
	\gf(v)=\sum_{0\leq j\leq J}c_j\varphi_j(v)+O_{J}\big(\varphi_{J+1}(v)\big)\quad(v\to\infty).
\end{equation*}
Nous écrivons alors 
\begin{equation}\label{eq:set_of_dev_asymp_f}
	\gf(v)\approx\sum_{j\geq 0}c_j\varphi_j(v)\quad(v\to\infty).
\end{equation}
Notons qu'il n'est pas nécessaire que la série \eqref{eq:set_of_dev_asymp_f} soit convergente (voir {\it e.g.} \cite[$\S 1.4$, $\S 1.5$]{bib:debruijn}). Dans la suite, nous notons $[v^n]\gf(v)$ le coefficient de $v^n$ dans le développement en série entière de $\gf(v)$.


\begin{proposition}\label{prop:dev_asymp:rho}
	Il existe une suite de polynômes $\{R_{j}\}_{j\in\N}$ telle que, pour tout $J\in\N$,
	\begin{equation}\label{eq:approximation:rho}
		\varrho_x=\sqrt{\frac{2\xi}{\log\xi}}\Bigg\{\sum_{0\leq j\leq J}\frac{R_j(\log_2\xi)}{(\log\xi)^{j}}+O\bigg(\bigg\{\frac{\log_2\xi}{\log\xi}\bigg\}^{J+1}\bigg)\Bigg\}\quad\big(x\geq 10^7\big).
	\end{equation}	
	Pour tout $j\in\N$, $R_j$ est de degré $j$, de coefficient dominant $3^j(2j)!/4^j(j!)^2$ et, notant $\gL=\log 2$, nous avons
	\begin{equation*}\label{eq:coefficients:rho}
		\begin{gathered}
			R_0(X)=1,\quad R_1(X)=\tfrac32X-\tfrac{3{\gL}}2,\quad R_2(X)=\tfrac{27}8X^2-\big(\tfrac{27{\gL}}4+\tfrac52\big)X+\tfrac{27{\gL}^2}{8}+\tfrac{5{\gL}}{2},\\
			R_3(X)=\tfrac{135}{16}X^3-\big(\tfrac{405{\gL}}{16}+14\big)X^2+\big(\tfrac{405{\gL}^2}{16}+28{\gL}+\tfrac{11}2\big)X-\tfrac{135{\gL}^3}{16}-14{\gL}^2-\tfrac{11{\gL}}{2}-\tfrac{2\pi^2}{3}\cdot
		\end{gathered}
	\end{equation*}
\end{proposition}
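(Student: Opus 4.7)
The relation \eqref{eq:relation_rho_nu} reduces the task to establishing the expansion \eqref{eq:approximation:rho} for $\nu_x$ in place of $\varrho_x$, since the error $\e^{-\sqrt{\log\xi}/2}$ is dominated by every power of $(\log_2\xi)/\log\xi$ and may be absorbed in the remainder. I write $L:=\log\xi$, $\ell:=\log_2\xi$, $\gL:=\log 2$, $\mu_x:=\sqrt{2\xi/\log\xi}$, and set $\nu_x=\mu_x A_x$ with $A_x$ close to $1$. Then $\log\mu_x=\tfrac12(L-\ell+\gL)$, and the defining equation $\nu_x^2\gI(\nu_x)=\xi$ becomes $A_x^2\gI(\mu_x A_x)=L/2$.

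I next insert the expansion \eqref{eq:sum:recriprocals:shifted:primes} of $\gI$. Iterated expansion of $\log\log(\mu_x A_x)=\ell+O(\ell/L)$ and of the $\alpha_{2j}$-corrections in Lemma \ref{prop:AsEx:sigma}, followed by geometric-series manipulations, produces a formal identity of the form
\[A_x^2=\frac{1}{1-3\ell/L+3\gL/L+2(\log A_x)/L+G(\ell,\log A_x,1/L)}\CommaBin\]
where $G$ gathers contributions of order $O((\ell/L)^2)$ whose coefficients are explicit polynomials in $\ell$ and $\log A_x$ with constants built from $\gL$ and the $\alpha_{2j}$. The polynomials $R_j$ are then obtained by induction on $j$: given $R_0,\ldots,R_{J-1}$, substituting the partial ansatz $A_x=\sum_{0\leq j\leq J-1} R_j(\ell)/L^j+B_J$ into the identity above and matching the coefficient of $1/L^J$ determines $R_J$ uniquely as a polynomial in $\ell$. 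The initial values $R_0=1$ and $R_1(X)=\tfrac32 X-\tfrac{3\gL}2$ are read off directly from the displayed identity, and $R_2$, $R_3$ follow by two and three further iterations; the appearance of $\pi^2$ in $R_3$ traces back to the $\alpha_2/(\log v)^2=\pi^2/\{6(\log v)^2\}$ correction of $\gI$, which first contributes at order $1/L^3$ after multiplication by the leading factor $L$.

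For the leading coefficient of $R_j$, I observe that the highest-degree-in-$\ell$ contribution at each stage of the iteration comes solely from the $-3\ell/L$ summand in the denominator above. Ignoring $\gL$, $\log A_x$, and all $\alpha_{2j}$-corrections reduces the problem to the scalar equation $A^2(1-3\ell/L)\sim 1$, whose solution
\[A=(1-3\ell/L)^{-1/2}=\sum_{j\geq 0}\binom{2j}{j}\frac{3^j}{4^j}\Big(\frac{\ell}{L}\Big)^j\]
yields the coefficient $3^j(2j)!/\{4^j(j!)^2\}$ announced in the proposition.

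The main difficulty is the bookkeeping. At each inductive step I must verify that $R_J$ has degree exactly $J$ and no more, by checking that none of the sources of $\ell$-dependence (nested logarithms, expansion of $\log A_x$, $\alpha_{2j}$-corrections) contribute a monomial of excessive degree, and that the error terms inherited from Lemma \ref{prop:AsEx:sigma} and from the various geometric expansions accumulate into a uniform $O((\ell/L)^{J+1})$ control for $x\geq x_0$.
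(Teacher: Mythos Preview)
Your plan is sound. The reduction to $\nu_x$ via \eqref{eq:relation_rho_nu}, the implicit equation $A_x^2=\{1-3\ell/L+3\gL/L+2(\log A_x)/L+G\}^{-1}$, the iterative determination of the $R_j$, and the binomial-series identification of the leading coefficient from $(1-3\ell/L)^{-1/2}$ are all correct. One cosmetic slip: $\log\log(\mu_xA_x)=\ell-\gL+O(\ell/L)$, not $\ell+O(\ell/L)$; the constant $-\gL$ is not $O(\ell/L)$, though it is recovered once you actually expand (and indeed your $3\gL/L$ term is right).

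The paper takes a different route at two points. Rather than iterating directly on $A_x$, it sets $\nu_x=\mu_x\e^{\gh_x}$, recasts the defining equation as $w_x=\gh_x/f_x(\gh_x)$ for explicit analytic $w_x$ and $f_x$, and applies the Lagrange inversion theorem to get $\gh_x=\sum_{k\geq1}c_{x,k}w_x^k$; it then expands $w_x$ and each $c_{x,k}$ as double series in $\sigma_x:=1/L$ and $\tau_x:=(\ell-\gL)/L$, producing fully explicit (if cumbersome) closed formulas for every coefficient of every $R_j$. For the leading coefficient it derives a recurrence for $A_n:=[X^n]R_n$ and solves it via the generating-function identity $\sum_k\binom{2k}{k}z^k=(1-4z)^{-1/2}$, reaching the same $3^j\binom{2j}{j}/4^j$ that you read off directly. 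Your approach is lighter and more transparent for the first few $R_j$ and for the leading-term claim; the paper's Lagrange-inversion machinery is heavier to set up but delivers ready-made the systematic all-orders bookkeeping that you flag as the remaining work.
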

\noindent{\it Remarque}. Les polynômes $R_j\ (j\geq 0)$ sont explicités en \eqref{def:zlj;Rj}. La contribution progressive des facteurs $\alpha_{2m}$ dans l'expression des coefficients des polynômes $R_j$ ne permet d'obtenir une forme close simple que pour le facteur dominant. Notons cependant que $[X^\ell]R_j(X)=P_{j,\ell}(\log 2)$ pour certains $P_{j,\ell}\in\R_{j-\ell}[X]\ (j\geq 0,\,0\leq \ell\leq j)$.
\begin{proof}
	D'après \eqref{eq:relation_rho_nu}, il nous suffit d'expliciter un développement asymptotique de $\nu_x$. Rappelons la définition des coefficients $\alpha_j$ en \eqref{def:alpha_m}. D'après \eqref{eq:sum:recriprocals:shifted:primes}, nous avons
	\begin{equation}\label{eq:serexp:I}
		\gI(v)\approx\log\Big(\frac{v}{\log v}\Big)+\sum_{j\geq 1}\frac{\alpha_{2j}}{(\log v)^{2j}}\cdot
	\end{equation}
	Notons $\nu_x=\mu_x\e^{\gh_x}$. Remarquons d'emblée que $\gh_x=o(1)$ $(x\to\infty)$ puisque $\nu_x\sim\mu_x$ d'après le Lemme \ref{l:equation_nu}. Posons
	\begin{equation*}\label{def:g;sigma;f}
		\begin{gathered}
			\lambda_x(z):=\gI(\mu_x\e^z)-\gI(\mu_x)\quad(|z|<1),\quad\sigma_x:=\frac{1}{\log\xi}\quad(x\geq 16),\\
			\tau_x:=\frac{\log(\frac12\log \xi)}{\log\xi}\quad\big(x\geq 10^7\big),\quad f_x(z):=\frac{z}{\e^{-2z}-1-2\sigma_x \lambda_x(z)}\quad(0<|z|<1).
		\end{gathered}
	\end{equation*}
	Notons que $f_x$ possède une singularité apparente en $z=0$ et admet donc un prolongement analytique au voisinage de l'origine. Posant
	\begin{equation}\label{def:ck}
		c_{x,k}:=\frac{1}{k!}\bigg[\frac{\d^{k-1}f_x(z)^k}{\d z^{k-1}}\bigg]_{z=0}\quad(k\geq 1),\quad w_x:=2\sigma_x \gI(\mu_x)-1\quad(x\geq 16),
	\end{equation}
	l'équation \eqref{def:equation_nu} équivaut à
	\begin{equation}\label{eq:defV}
		w_x=\frac{\gh_x}{f_x(\gh_x)}\cdot
	\end{equation}
	Par ailleurs, l'estimation \eqref{eq:serexp:I} implique $\gI(\mu_x)=\tfrac12\log\xi+O(\log_2\xi)$ de sorte que $w_x=O(\sigma_x\log_2\xi)=o(1)$. D'après le théorème d'inversion de Lagrange (voir {\it e.g.} \cite[$\S$2.2]{bib:debruijn}), l'équation \eqref{eq:defV} admet une unique solution donnée par
	\begin{equation}\label{eq:expression;V}
		\gh_x=\sum_{k\geq 1}c_{x,k}w_x^k.
	\end{equation}
	Rappelons les expressions des réels $\alpha_{2j}$ obtenues en \eqref{eq:exp:alpha2j} et introduisons pour $|z|<1$
	\begin{equation}\label{def:series_formelles}
		\begin{gathered}
			\Lambda(z;s,t):=z-\log\Big(1+\frac{2zs}{1-t}\Big)+\sum_{j\geq 1}\frac{\alpha_{2j}(2s)^{2j}}{(1-t)^{2j}}\bigg\{\bigg(1+\frac{2zs}{1-t}\bigg)^{-2j}-1\bigg\},\\W(s,t):=-3t-2s\log(1-t)+\sum_{j\geq 1}\frac{\alpha_{2j}(2s)^{2j+1}}{(1-t)^{2j}},\\ F(z;s,t):=\frac{z}{\e^{-2z}-1-2s\Lambda(z;s,t)},\\
			H(s,t):=\sum_{k\geq 1}\frac{\{W(s,t)\}^k}{k}[z^{k-1}]F(z;s,t)^k,\quad E(s,t):=\e^{H(s,t)}\cdot
		\end{gathered}
	\end{equation}
	Les séries impliquées dans les définitions \eqref{def:series_formelles} sont formelles. Les opérations de composition et d'exponentiation étant compatibles avec les développements asymptotiques à tout ordre fini, les troncatures de ces séries fournissent, après la substitution $s=\sigma_x$ et $t=\tau_x$, des développements asymptotiques des quantités $w_x$ et $\gh_x$ données par \eqref{def:ck} et \eqref{eq:expression;V}. En particulier, puisque
	\[\log\mu_x=\frac{1-\tau_x}{2\sigma_x},\quad \log_2\mu_x=\frac{\tau_x}{\sigma_x}+\log(1-\tau_x),\]
	nous obtenons, d'après la relation \eqref{eq:serexp:I},
	\begin{equation}\label{eq:serexp:w}
		w_x\approx W(\sigma_x,\tau_x).
	\end{equation}
	Au vu des définitions \eqref{def:series_formelles} et de la relation \eqref{eq:serexp:w}, nous déduisons également $\e^{\gh_x}\approx E(\sigma_x,\tau_x)$. Puisque, de plus, $\tau_x=\sigma_x\log(\tfrac12\log\xi)$, nous obtenons
	\begin{equation}\label{eq:devasehx}
		\begin{aligned}
			\e^{\gh_x}&\approx\sum_{k,\ell\geq 0} \{[s^{k}t^\ell]E(s,t)\}\sigma_x^k\tau_x^\ell\\
			&\approx \sum_{k,\ell\geq 0}\{[s^{k}t^\ell]E(s,t)\}\sigma_x^{k+\ell}(\log_2\xi-\log 2)^\ell\\
			&\approx\sum_{j\geq 0}\sigma_x^jR_j(\log_2\xi),
		\end{aligned}
	\end{equation}
	où l'on a posé
	\begin{equation}\label{def:zlj;Rj}R_j(X):=\sum_{0\leq \ell\leq j}\{[s^{j-\ell}t^\ell]E(s,t)\}(X-\log 2)^\ell,\end{equation}
	ce qui établit \eqref{eq:approximation:rho}. Remarquons enfin que le coefficient dominant de $R_j$ est donné par
	\[[X^j]R_j(X)=[t^j]E(0,t)=[t^j]\frac1{\sqrt{1-3t}}=\big(\tfrac{3}{4}\big)^j\binom{2j}{j}.\]
	Cela achève la démonstration. 
\end{proof}




\section{Domaines de contribution principale}\label{sec:main_contribution}
Cette section est dévolue à l'explicitation de plages de valeurs pour $\p(n)$ et $\nu(n)$ dominant la somme $S_\nu(x)$. Les deux valeurs $\nu=\omega$ et $\nu=\Omega$ sont considérées.\par
Écrivons tout entier $n\geq 2$ sous la forme $n=a{p_{\omega}(n)}^\ell b=A{\pO(n)}B$ avec $P^+(a)< p<P^-(b)$, $P^+(A)\leq p\leq P^-(B)$, $\omega(b)-\omega(a)=\1_{\{2|\omega(n)\}}$ et $\Omega(B)-\Omega(A)=\1_{\{2|\Omega(n)\}}$. La décomposition est clairement unique dans le cas $\nu=\omega$; elle l'est aussi lorsque $\nu=\Omega$  puisque les conditions indiquées impliquent $\Omega(A)=\lfloor{(\Omega(n)-1)/2}\rfloor$. Rappelons la définition de $\Phi_{\nu,k}(x,y)$ en \eqref{def:phinuklambda} et remarquons que
\begin{equation}\label{somme_complete}
	\begin{aligned}
		S_\omega(x)&=\sum_{p\leq x}\frac1p\sum_{k\leq\frac{\log x}{\log 4}}\sum_{\substack{a\leq x/p\\P^+(a)< p\\\omega(a)=k}}\sum_{\ell\leq\frac{\log x}{\log 2}}\bigg\{\Phi_{\omega,k}\Big(\frac x{ap^\ell},p\Big)+\Phi_{\omega,k+1}\Big(\frac x{ap^\ell},p\Big)\bigg\}=:S_{\omega,\iota}(x)+S_{\omega,\pi}(x),\\
		S_\Omega(x)&=\sum_{p\leq x}\frac{1}{p}\sum_{\substack{A\leq x/p\\ P^+(A)\leq p}}\sum_{\substack{B\leq x/Ap\\ P^-(B)\geq p}}\big(\1_{\{\Omega(A)=\Omega(B)\}}+\1_{\{\Omega(A)=\Omega(B)-1\}}\big)=:S_{\Omega,\iota}(x)+S_{\Omega,\pi}(x).
	\end{aligned}
\end{equation}
Commençons par évaluer la contribution $S_{\nu,\iota}(x)$ correspondant aux entiers $n\leq x$ pour lesquels $\nu(n)$ est impair. La première étape consiste à dégager un domaine de contribution principale pour les variables $p$ et $k$. \par
Rappelons la définition de $\varrho_x$ en \eqref{def:Hnu;H0}, celle de $\xi:=\log_2x$ en \eqref{def:Li;Ei;xi}, posons
\begin{equation}\label{def:intervalles:P;K}
	\begin{gathered}
		q_{1,x}:=\e^{\varrho_x/(\log\varrho_x)^2}=\e^{\{1+o(1)\}\sqrt{32\xi/(\log\xi)^5}},\quad q_{2,x}:=\e^{\varrho_x(\log\varrho_x)^2}=\e^{\{1+o(1)\}\sqrt{\xi(\log\xi)^{3}/8}},\\
		\sP_{\omega,x}:=[q_{1,x},q_{2,x}],\quad \sK_{\omega,x}:=\bigg[1,\frac{\xi}{\log\xi}\bigg],\quad\sP_{\Omega,x}:=[2,\log x],\quad\sK_{\Omega,x}:=\big[1,\tfrac32\xi\big],\\
		\sA_{\nu,x}:=\{n\leq x:\p(n)\in\sP_{\nu,x},\,\Omega(n)\leq 10\xi,\, \nu(n)=2k+1,\,k\in\sK_{\nu,x}\}\quad \big(x\geq 16\big)
	\end{gathered}
\end{equation}
et notons
\begin{equation}\label{def:Siotastar}
	S_{\nu,\iota}^*(x):=\sum_{n\in\sA_{\nu,x}}\frac1{\p(n)}\quad(x\geq 16),
\end{equation}
la contribution à $S_{\nu,\iota}(x)$ des entiers $n\leq x$ appartenant au domaine $\sA_{\nu,x}$. Les deux résultats suivants permettent de restreindre l'étude de $S_{\nu,\iota}(x)$ à celle de $S_{\nu,\iota}^*(x)$.


\begin{proposition}
	Il existe une constante $c_0>\tfrac32$ telle que
	\begin{equation}\label{eq:est:S;Sstar}
		S_{\omega,\iota}(x)=S_{\omega,\iota}^*(x)+O\Big(\frac{x}{\log x}\exp\Big\{{\sqrt{2(\log_2 x)\log_3 x}\Big(1-\frac{c_0\log_4 x}{\log_3 x}\Big)}\Big\}\Big)\quad\big(x\geq  10^7\big).
	\end{equation}
\end{proposition}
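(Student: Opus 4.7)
I would bound the error by splitting the exceptional integers into four categories according to which defining condition of $\sA_{\omega,x}$ fails:
\[
S_{\omega,\iota}(x) - S_{\omega,\iota}^*(x) \leq U_1(x) + U_2(x) + U_3(x) + U_4(x),
\]
where $U_1$ (resp. $U_2$) collects the contributions of $n$ with $\omega(n)$ odd and $\p(n) < q_{1,x}$ (resp. $\p(n) > q_{2,x}$), $U_3$ those with $\omega(n) = 2k+1$ and $k > \xi/\log\xi$, and $U_4$ those with $\Omega(n) > 10\xi$. It suffices to dominate each $U_j$ by the right-hand side of \eqref{eq:est:S;Sstar}.

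Using the decomposition \eqref{somme_complete}, each $U_j$ (for $j \leq 3$) rewrites as a sum over $(p, k, a, \ell)$ of $\Phi_k(x/(ap^\ell), p)/p$ with auxiliary restrictions. Since the main contribution to $S_\omega(x)$ concentrates at $\beta_p = (\log_2 p)/\log_2 x \to 0$, the sharp local-law estimates of \cite{bib:papier2} (valid on compact subintervals of $(0,1)$ for $\beta_p$) do not apply directly. I would instead invoke uniform upper bounds: Halberstam-Richert type sieve estimates for $\Phi_k(y, p)$ and Selberg-Sathe-type asymptotics for $\lambda_\omega(p, k) = \sum_{P^+(a)\leq p,\,\omega(a)=k} 1/a$. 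The resulting pointwise bound for $M_\omega(x, p)/p$ is of Bessel-function type in $k$, with envelope peaked at $\log p \asymp \varrho_x$, and exhibits quasi-Gaussian decay around the joint saddle $(p^*, k^*)$. This decay controls $U_1, U_2$ (via deviation in $p$) and $U_3$ (via deviation in $k$) simultaneously: the excluded ranges lie far enough from the saddle that the decay yields savings of size $\exp(-c\varrho_x(\log\varrho_x)^\gamma)$ for some $\gamma > 0$, vastly exceeding the target $\exp(-c_0\varrho_x\log\log\varrho_x)$. For $U_4$, the moment inequality $\sum_{n\leq x} 2^{\Omega(n)} \ll x(\log x)^{\log 4 - 1}$ combined with Markov's inequality gives $|\{n\leq x : \Omega(n) > 10\xi\}| \ll x(\log x)^{-c}$ for some $c > 0$, and the weight $1/\p(n) \leq 1/2$ provides the required saving immediately.

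The main obstacle will be calibrating the Bessel/Rankin argument precisely enough to reach the strict threshold $c_0 > 3/2$. This value is significant: $3/2$ is the absolute value of the leading coefficient of $P_1$ in Corollary \ref{cor:explicit_formula}, so the error term \eqref{eq:est:S;Sstar} must lie strictly below the first subleading correction $\sim \tfrac32\log_4 x \cdot \sqrt{2\xi/\log_3 x}$ of $\log(S_\omega(x)/(x/\log x))$. Since the quasi-Gaussian savings produced above are of strictly higher order of magnitude, essentially any $c_0 > 3/2$ will be admissible; the precise value emerges from an explicit computation that inserts the asymptotic expansion of $\varrho_x$ from Proposition \ref{prop:dev_asymp:rho}.
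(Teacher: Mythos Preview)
Your overall decomposition and toolkit coincide with the paper's: the paper also disposes of $\Omega(n)>10\xi$ and $p_{\omega,m}(n)>\log x$ trivially, then writes the remaining error as $E_1(x)$ (your $U_3$: $k\notin\sK_{\omega,x}$) plus $E_2(x)$ (your $U_1\cup U_2$: $p\notin\sP_{\omega,x}$), and bounds both via the Hardy--Ramanujan inequality for $\Phi_k$ together with the multinomial/Mertens bound $\lambda_\omega(p,k)\ll(\log_2 p+M_0)^k/k!$, producing summands of the shape $\big(\e\sqrt{(\log_2 p+M_0)\xi}/k\big)^{2k}$.

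There is, however, a miscalibration in your estimate for $U_1$. After maximising in $k$, the envelope is $p^{-2}\exp\big(2\sqrt{(\log_2 p+M_0)\xi}\big)$; at the boundary $\log p=\varrho_x/(\log\varrho_x)^2$ one has $\log_2 p=\tfrac12\log\xi-\tfrac52\log_2\xi+O(1)$, so the exponent equals
\[
\sqrt{2\xi\log\xi}\Big\{1-\frac{5\log_2\xi}{2\log\xi}+O\Big(\frac1{\log\xi}\Big)\Big\}.
\]
The saving is therefore only of order $\exp(-c\,\varrho_x\log_2\varrho_x)$, \emph{not} $\exp(-c\,\varrho_x(\log\varrho_x)^\gamma)$ as you assert. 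In other words $U_1$ is the binding constraint: it sits at exactly the same scale as the target error, and the admissible constant $c_0=\tfrac52$ comes directly from the exponent $2$ in the cutoff $q_{1,x}=\exp\big(\varrho_x/(\log\varrho_x)^2\big)$, not from any qualitative Gaussian decay. Your claims for $U_2$, $U_3$, $U_4$ are correct (the paper indeed gets $E_1(x)\ll x\xi/\log x$ for large $k$ and an $o(1)$ contribution for $p>q_{2,x}$), but for $U_1$ you must carry out the boundary computation explicitly to extract $c_0>\tfrac32$; the statement ``essentially any $c_0>3/2$ will be admissible'' is right in conclusion but not for the reason you give.
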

\begin{proof}
	Rappelons la définition de $\xi$ en \eqref{def:Li;Ei;xi}. D'après \cite[lemme 2.1]{bib:papier1}, nous avons
	\[\sum_{\substack{n\leq x\\ \Omega(n)>10\xi}}\frac{1}{\po(n)}\ll\frac{x}{(\log x)^5},\quad\sum_{\substack{n\leq x\\ \po(n)>\log x}}\frac{1}{\po(n)}\leq\sum_{n\leq x}\frac1{\log x}\leq\frac{x}{\log x}\cdot\]
	Posons
	\[E(x):=\sum_{\substack{n\in[1,x]\smallsetminus\sA_{\omega,x}\\ \omega(n)\text{ impair}\\\Omega(n)\leq10\xi\\ \po(n)\leq\log x}}\frac{1}{\po(n)}=\Bigg(\sum_{\substack{n\leq x\\\omega(n)\text{ impair}\\ \omega(a)\notin\sK_{\omega,x}\\ \Omega(n)\leq10\xi\\ \po(n)\leq\log x}}+\sum_{\substack{n\leq x\\\omega(n)\text{ impair}\\ \omega(a)\in\sK_{\omega,x}\\ \po(n)\notin\sP_{\omega,x}\\ \po(n)\leq\log x}}\Bigg)\frac{1}{\po(n)}=:E_1(x)+E_2(x)\quad (x\geq  16),\]
	de sorte que
	\begin{equation}\label{eq:S;Sstar;E}
		S_{\omega,\iota}(x)=S_{\omega,\iota}^*(x)+E(x)+O\Big(\frac x{\log x}\Big)\cdot
	\end{equation}
	Nous évaluons $E(x)$ en suivant la méthode employée dans \cite[$\S$4]{bib:ouellet}. L'inégalité de Hardy-Ramanujan permet de majorer $E_1(x)$ par
	\begin{equation}\label{eq:majo:E1:inter:1}
		\begin{aligned}
			\sum_{p\leq \log x}\frac1p\sum_{\substack{k\notin\sK_{\omega,x}\\ 1\leq k\leq 10\xi}}\sum_{\substack{a\leq x\\ P^+(a)<p\\ \omega(a)=k}}\sum_{\ell\leq10\xi}\sum_{\substack{b\leq x/ap^\ell\\\omega(b)=k}}1\ll\frac{x}{\log x}\sum_{p\leq\log x}\frac1{p^2}\sum_{\substack{k\notin\sK_{\omega,x}\\ 1\leq k\leq 10\xi}}\frac{\xi^{k-1}}{(k-1)!}\sum_{\substack{P^+(a)< p\\ \omega(a)=k}}\frac1a\cdot
		\end{aligned}
	\end{equation}
	Par ailleurs, le théorème de Mertens fournit la majoration
	\begin{equation}\label{eq:majo:sum:reciprocals:smooth}
		\sum_{\substack{P^+(a)< p\\ \omega(a)=k}}\frac1a\ll\frac{1}{k!}\bigg(\sum_{q\leq p}\frac{1}{q-1}\bigg)^{k}\ll\frac{(\log_2 p+M_0)^{k}}{k!}\CommaBin
	\end{equation}
	où $M_0$ désigne la constante de Meissel-Mertens. Regroupant les majorations \eqref{eq:majo:E1:inter:1} et \eqref{eq:majo:sum:reciprocals:smooth}, nous obtenons
	\begin{equation}\label{eq:majo:Sstar:inter:1}
		\begin{aligned}
			E_1(x)&\ll\frac{x}{\xi\log x}\sum_{p\leq\log x}\frac1{p^2}\sum_{\substack{k\notin\sK_{\omega,x}\\ k\leq 10\xi}}\frac{(\{\log_2 p+M_0\}\xi)^k}{k!(k-1)!}\\
			&\ll\frac{x}{\xi\log x}\sum_{p\leq \log x}\frac1{p^2}\sum_{\xi/\log\xi<k\leq 10\xi}\bigg(\frac{\e\sqrt{\{\log_2 p+M_0\}\xi}}{k}\bigg)^{2k}\CommaBin
		\end{aligned}
	\end{equation}
	d'après la formule de Stirling. Puisque la fonction $v\mapsto(\e\lambda/v)^v$ atteint son maximum en $v=\lambda$, nous déduisons de \eqref{eq:majo:Sstar:inter:1} que
	\begin{equation}\label{eq:majo:E11}
		E_1(x)\ll\frac{x}{\log x}\sum_{p\leq \log x}\frac{1}{p^2}\bigg(\frac{\{\log\xi\}^{3/2}}{\sqrt{\xi}}\bigg)^{2\xi/\log\xi}\ll\frac{x\xi}{\log x}\cdot
	\end{equation}\par
	Il reste à majorer $E_2(x)$. Rappelons les définitions de $q_{1,x}$ et $q_{2,x}$ en \eqref{def:intervalles:P;K}. L'estimation \eqref{eq:approximation:rho} avec $J=0$ implique
	\begin{equation*}\label{eq:varrho:asymptotics}
		\varrho_x=\sqrt{\frac{2\xi}{\log\xi}}\bigg\{1+O\bigg(\frac{\log_2\xi}{\log\xi}\bigg)\bigg\}\quad(x\geq10^7).
	\end{equation*}
	Par suite, 
	\[\log_2 q_{1,x}=\tfrac12\big\{{\log}\xi-5\log_2 \xi+5\log 2+o(1)\big\},\quad\log_2 q_{2,x}=\tfrac12\big\{{\log}\xi+3\log_2 \xi-3\log 2+o(1)\big\}.\]
	Le terme d'erreur $E_2(x)$ est donc
	\begin{equation}\label{eq:majo:E22}
		\begin{aligned}
			\ll\frac{x}{\xi\log x}\sum_{p\notin\sP_{\omega,x}}\frac1{p^2}\sum_{k\in\sK_{\omega,x}}\bigg(\frac{\e\sqrt{\{\log_2 p+M_0\}\xi}}{k}\bigg)^{2k}\ll\frac{x}{\log x}\sum_{p\notin \sP_{\omega,x}}\frac{\e^{2\sqrt{(\log_2 p+M_0)\xi}}}{p^2}\cdot
		\end{aligned}
	\end{equation}
	Or, nous avons d'une part
	\begin{equation}\label{eq:majo:E2:smallp}
		\sum_{p< q_{1,x}}\frac{\e^{2\sqrt{(\log_2 p+M_0)\xi}}}{p^2}\ll\exp\bigg(\sqrt{2\xi\log\xi}\bigg\{1-\frac{5\log_2 \xi}{2\log\xi}+O\bigg(\frac{1}{\log\xi}\bigg)\bigg\}\bigg)\CommaBin
	\end{equation}
	et, d'autre part,
	\begin{equation}\label{eq:majo:E2:bigp}
		\sum_{q_{2,x}< p\leq \log x}\frac{\e^{2\sqrt{(\log_2 p+M_0)\xi}}}{p^2}\ll\frac{\e^{\sqrt{2\xi\log\xi}}}{\e^{2\sqrt{\xi\log\xi}}}=o(1).
	\end{equation}
	En regroupant les estimations \eqref{eq:S;Sstar;E}, \eqref{eq:majo:E11}, \eqref{eq:majo:E22}, \eqref{eq:majo:E2:smallp} et \eqref{eq:majo:E2:bigp} nous obtenons  \eqref{eq:est:S;Sstar}.
\end{proof}



\begin{proposition}
	Nous avons
	\begin{equation}\label{eq:est:S;Sstar:Omega}
		S_{\Omega,\iota}(x)=S_{\Omega,\iota}^*(x)+O\Big(\frac{x}{\log x}\Big)\quad(x\geq 16).
	\end{equation}
\end{proposition}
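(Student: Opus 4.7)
The plan is to decompose the integers excluded from $\sA_{\Omega,x}$ (among those with $\Omega(n)$ odd) into a few manageable classes and bound each trivially. Since the target error $O(x/\log x)$ is weak and the cutoffs defining $\sP_{\Omega,x}$ and $\sK_{\Omega,x}$ are extremely generous, no delicate argument is required, in sharp contrast with the proof of \eqref{eq:est:S;Sstar}.

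First I would observe that the conditions $\Omega(n)\leqslant 10\xi$ and $\Omega(n)=2k+1$ with $k\in\sK_{\Omega,x}=[1,\tfrac32\xi]$ together force $\Omega(n)\leqslant 2\lfloor\tfrac32\xi\rfloor+1$, so the $10\xi$ bound is redundant. The difference $S_{\Omega,\iota}(x)-S_{\Omega,\iota}^*(x)$ is then the sum of $1/\pO(n)$ over integers $n\leqslant x$ with $\Omega(n)$ odd falling into one of three disjoint cases: (a) $\Omega(n)=1$, i.e.\ $n$ prime; (b) $\Omega(n)\geqslant 3$ with $\pO(n)>\log x$; (c) $\Omega(n)\geqslant 3$ odd with $\Omega(n)>3\xi+1$.

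Each class is handled by a standard estimate. For (a), Mertens' theorem yields $\sum_{p\leqslant x}1/p=\xi+O(1)\ll x/\log x$. For (b), the trivial bound $1/\pO(n)\leqslant 1/\log x$ gives a total contribution of at most $x/\log x$. For (c), I would use $1/\pO(n)\leqslant 1/2$ combined with the Hardy--Ramanujan inequality $|\{n\leqslant x:\Omega(n)=k\}|\ll x(\xi+c)^{k-1}/((k-1)!\log x)$, valid uniformly in $k\geqslant 1$ for an absolute constant $c$; Stirling's formula then produces $\sum_{k>3\xi+1}(\xi+c)^{k-1}/(k-1)!\ll (\e/3)^{3\xi}=(\log x)^{-3(\log 3-1)}$, so the total contribution from (c) is $\ll x(\log x)^{-1-3(\log 3-1)}=o(x/\log x)$. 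Summing the three bounds yields \eqref{eq:est:S;Sstar:Omega}. There is no genuine obstacle here: the result follows directly from classical estimates on the distribution of $\Omega(n)$, because for $\nu=\Omega$ the dominant values of $\pO(n)$ are very small compared to $\log x$, so a crude cutoff already places the negligible tail comfortably below $x/\log x$.
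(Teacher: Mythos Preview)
Your three-case decomposition mirrors the paper's proof, which also treats $\pO(n)>\log x$ trivially and $\Omega(n)>3\xi+1$ via an external lemma; your explicit handling of the primes in (a) even fills a small gap the paper leaves implicit (primes $p\leq\log x$ contribute only $O(\log_3 x)$ and are tacitly absorbed into the error).

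There is, however, a genuine error in step (c). The inequality you invoke,
\[
|\{n\leq x:\Omega(n)=k\}|\ll \frac{x(\xi+c)^{k-1}}{(k-1)!\log x}\quad\text{uniformly in }k\geq 1,
\]
is \emph{false} for $\Omega$. It is the Hardy--Ramanujan inequality for $\omega$; for $\Omega$ it holds only in the range $k\leq (2-\varepsilon)\xi$. Beyond that range the powers of $2$ take over: the single integer $n=2^k\leq x$ already shows the left side is $\geq 1$ for every $k\leq(\log x)/\log 2$, while your right side tends to~$0$. The correct uniform bound for $\Omega$ carries an additional term of order $x/2^k$, and summing it over $k>3\xi$ contributes $\ll x(\log x)^{-\log 8}=o(x/\log x)$, so the conclusion survives once the statement is repaired. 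Alternatively---and this is closer to what the paper does, invoking \cite[lemme~2.1]{bib:papier1}---you can bypass the levelwise bound and estimate the whole tail in one stroke by Rankin's trick: for any fixed $z\in(1,2)$,
\[
\tfrac12\,|\{n\leq x:\Omega(n)>3\xi\}|\leq \tfrac12\, z^{-3\xi}\sum_{n\leq x}z^{\Omega(n)}\ll_z x(\log x)^{\,z-1-3\log z},
\]
and the exponent equals $1-\log 8<-1$ at $z=2$, hence is $<-1$ for $z$ close enough to~$2$. Either route repairs your argument.
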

\begin{proof}
	Nous avons d'abord, trivialement,
	\[\sum_{\substack{n\leq x\\ \pO(n)>\log x}}\frac1{\pO(n)}\leq \frac{x}{\log x}\cdot\]
	Ensuite, d'après \cite[lemme 2.1]{bib:papier1} appliqué avec $k=\lfloor 3\xi\rfloor$, nous avons
	\begin{equation}\label{eq:majo:bigOmega}
		\sum_{\substack{n\leq x\\ \Omega(n)>3\xi+1}}\frac{1}{\pO(n)}\leq\sum_{p\leq x}\frac1p\sum_{\substack{d\leq x/p\\ \Omega(d)\geq\lfloor3\xi\rfloor}}1\ll\frac{x\log_2 x}{(\log x)^{\log 8 -1}}\ll\frac{x}{\log x}\cdot\qedhere
	\end{equation}
\end{proof}




\section{Preuve du Théorème \ref{thm:somme_inverse:sans_multiplicite}: préparation technique}
\label{section:reduction_Sstar}

Rappelons la définition de $F_\nu(y,z)$ en \eqref{def:Hnu;H0}, celles de $\sF_\nu(z)$ en \eqref{def:g;c} et de $\xi$ en \eqref{def:Li;Ei;xi} et posons
\begin{equation}\label{def:g}
	\begin{gathered}
		g_\Omega(y,z):=\frac{\sF_\Omega(z)}{(1-z/y)F_\Omega(y,z)},\quad g_\omega(y,z):=\frac{\sF_\omega(z)}{F_\omega(y,z)}\quad(y\geq 2,\, |z|<2),\\
		\gr_{x,t}:=\frac{t-1}{\xi}\quad(x\geq 3,\,t\in\R).
	\end{gathered}
\end{equation}
Nous ferons usage du résultat suivant dû à Alladi. Rappelons la définition $\Phi_{\nu,k}(x,y)$ en \eqref{def:phinuklambda}.


\begin{lemmanodot}{\bf (\cite[th. 6]{bib:alladi}).}\label{thm:Phikxy:eval} 
	Soit $\varepsilon>0$. Sous les conditions $3\leq y\leq\e^{(\log x)^{2/5}}$ et $0< \gr_{x,k}\leq 2-\varepsilon$, nous avons
	\begin{equation}\label{eq:Phikxy:eval}
		\Phi_{\omega,k}(x,y)=\frac{xg_\omega(y,\gr_{x,k})\xi^{k-1}}{(\log x)\Gamma(1+\gr_{x,k})(k-1)!}\bigg\{1+O\bigg(\frac{k\{\log_2 y\}^2}{\xi^2}\bigg)\bigg\}\quad(x\geq 3).
	\end{equation}
\end{lemmanodot}


Posons, pour $k\geq 1$, $p\geq 3$,
\begin{equation}\label{def:alpha;j;eta}
	\begin{gathered}
		\sU_x:=\bigg[\frac{\varrho_x}{(\log\varrho_x)^2},\varrho_x(\log\varrho_x)^2\bigg],\quad \sD_{x}^{\pm}:=\bigg]-\infty\pm \frac{i\xi}{\varrho_x},\pm \frac{i\xi}{\varrho_x}\bigg],\quad\sC_x:=\bigg\{\frac{\xi{\e^{i\vartheta}}}{\varrho_x}:|\vartheta|\leq\tfrac{\pi}{2}\bigg\},\\
		S_{\omega,\iota}^{**}(x):=\frac{x}{2\pi i\xi\log x}\int_{\sH_x}\int_{\sU_x}\frac{F_\omega(\e^u,\xi/s)\e^{s-u}\d u\d s}{u}\quad (x\geq 3),
	\end{gathered}
\end{equation}
où $\sH_x$ est un contour de Hankel constitué des deux demi-droites $\sD_x^+$, $\sD_x^-$ et du demi-cercle $\sC_x$.


\begin{proposition}
	Nous avons
	\begin{equation}\label{eq:reduction:Sstar}
		S_{\omega,\iota}^*(x)=S_{\omega,\iota}^{**}(x)\Big\{1+O\Big(\frac1{\log_3 x}\Big)\Big\}\quad(x\geq 16).
	\end{equation}
\end{proposition}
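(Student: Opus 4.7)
Le plan est, en trois temps, de substituer la formule d'Alladi \eqref{eq:Phikxy:eval} \`a $\Phi_k$, de r\'eduire les sommes internes sur $a,\ell$, puis de transformer la somme double r\'esultante en $(p,k)$ en l'int\'egrale de contour d\'efinissant $S_{\omega,\iota}^{**}(x)$.

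D'apr\`es la d\'ecomposition $n = ap^\ell b$ des entiers $n \in \sA_{\omega,x}$ avec $\omega(n) = 2k+1$ (d'o\`u $\omega(a) = \omega(b) = k$), nous partons de
\[S_{\omega,\iota}^*(x) = \sum_{p \in \sP_{\omega,x}} \frac{1}{p}\sum_{k \in \sK_{\omega,x}}\sum_{\substack{P^+(a) < p\\ \omega(a) = k}}\frac{1}{a}\sum_{\ell \geq 1}\Phi_k\Big(\frac{x}{ap^\ell}, p\Big).\]
Sur la plage effective, $\log p \leq \varrho_x(\log\varrho_x)^2 \ll \{\log x\}^{2/5}$ et $\gr_{x,k} \leq 1/\log\xi$, donc les hypoth\`eses de \eqref{eq:Phikxy:eval} sont remplies avec une erreur relative $\ll 1/\log_3 x$; on v\'erifie de m\^eme $\xi_y = \xi + O(1)$ et $\log y = (\log x)\{1 + O(1/\log_3 x)\}$ pour $y = x/(ap^\ell)$ sur la plage dominante. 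Les sommations internes sont alors \'el\'ementaires: $\sum_{\ell\geq 1}p^{-\ell} = 1/(p-1)$ et $\sum_{P^+(a)\leq p,\,\omega(a)=k} 1/a = \lambda_\omega(k,p)$, au contr\^ole pr\`es des contributions marginales ($\ell$ grand, troncature en $a\leq x/p^\ell$, passage de $<$ \`a $\leq$), toutes n\'egligeables. Il vient
\[S_{\omega,\iota}^*(x) = \frac{x}{\log x}\sum_{p \in \sP_{\omega,x}}\frac{1}{p(p-1)}\sum_{k \in \sK_{\omega,x}}\frac{g_\omega(p,\gr_{x,k})\,\lambda_\omega(k,p)\,\xi^{k-1}}{(k-1)!\,\Gamma(1+\gr_{x,k})}\Big\{1 + O\Big(\frac{1}{\log_3 x}\Big)\Big\}.\]

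La derni\`ere \'etape exploite que $\Gamma(1+\gr_{x,k}) = 1 + O(1/\log_3 x)$ uniform\'ement sur $\sK_{\omega,x}$, et qu'une analyse de type Laplace montre que la somme en $k$ se concentre au voisinage d'un col $k^* \asymp \sqrt{\xi\log_2 p}$ o\`u $\gr_{x,k^*}\log_2 p \ll (\log\xi)^{3/2}/\sqrt{\xi} = o(1)$, d'o\`u $g_\omega(p, \gr_{x,k^*}) = 1 + o(1)$. En rempla\c{c}ant ces facteurs par $1$ dans le voisinage dominant, et en appliquant la repr\'esentation de Hankel $1/(k-1)! = (2\pi i)^{-1}\int_\sH \e^s s^{-k}\d s$ avec la g\'en\'eratrice $\sum_k \lambda_\omega(k,p)z^k = F_\omega(p,z)$, nous obtenons, le terme $\lambda_\omega(0,p) = 1$ \'etant annul\'e par $\int_\sH \e^s\d s = 2\pi i/\Gamma(0) = 0$,
\[\sum_{k\geq 1}\frac{\lambda_\omega(k,p)\,\xi^{k-1}}{(k-1)!} = \frac{1}{2\pi i\,\xi}\int_{\sH_x} F_\omega(p, \xi/s)\,\e^s\,\d s + O(\text{troncature}),\]
la troncature du contour de Hankel \`a $\sH_x$ co\^utant seulement une erreur exponentiellement petite en $\varrho_x$, par la d\'ecroissance rapide de $|\e^s|$ le long des demi-droites $\sD_x^\pm$. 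Le th\'eor\`eme des nombres premiers, sous la forme
\[\sum_{p \in \sP_{\omega,x}}\frac{f(p)}{p(p-1)} = \int_{\sU_x}\frac{f(\e^u)\,\e^{-u}}{u}\,\d u\,\{1 + O(\e^{-c\sqrt{\log x}})\},\]
appliqu\'e \`a $f(p) = (2\pi i)^{-1}\int_{\sH_x} F_\omega(p, \xi/s)\e^s\d s$, reproduit alors exactement l'int\'egrale double d\'efinissant $S_{\omega,\iota}^{**}(x)$.

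L'obstacle principal est le contr\^ole uniforme du facteur $g_\omega(p, \gr_{x,k})/\Gamma(1+\gr_{x,k})$ \`a la derni\`ere \'etape: il n'est pas identiquement $1 + o(1)$ en dehors du voisinage du col. C'est le choix du rayon $\xi/\varrho_x$ pour la partie circulaire $\sC_x$ --- dict\'e pr\'ecis\'ement par $\Psi_x(\varrho_x) = 0$ --- qui correspond au col de l'int\'egrale r\'esultante et assure \`a la fois que les queues en $k$ sont absorb\'ees dans l'erreur et que la troncature de $\sH$ en $\sH_x$ reste inf\'erieure \`a $1/\log_3 x$.
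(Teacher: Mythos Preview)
Your proposal follows essentially the same route as the paper: Alladi's formula for $\Phi_k$, replacement of $g_\omega(p,\gr_{x,k})$ and $\Gamma(1+\gr_{x,k})$ by $1$, the Hankel representation of $1/(k-1)!$ combined with $\sum_k\lambda_\omega(k,p)z^k=F_\omega(p,z)$, and the prime number theorem to pass from the $p$-sum to the $u$-integral over $\sU_x$. The paper handles the $g_\omega$ step more directly --- it invokes the mean value theorem using $\gr_{x,k}\leqslant 1/\log\xi$ uniformly on $\sK_{\omega,x}$, rather than your saddle-concentration argument --- and note that no ``truncation'' of the Hankel contour arises: $\sH_x$ is itself a full infinite Hankel contour and $F_\omega(p,\cdot)$ is a polynomial of degree $\pi(p)$, so the identity is exact.
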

\begin{proof}
	Pour $p\in\sP_{\omega,x}$, $k\in\sK_{\omega,x}$, nous avons $p\leq\log x$ et, puisque $\Omega(n)\leq10\log_2 x$, il vient $ap^\ell\leq(\log x)^{10\log_2 x+1}$, et donc
	\[p<\log x<\e^{\{\log(x/ap^\ell)\}^{2/5}}\quad (1\leq\ell\leq \Omega(n)),\]
	pour $x$ suffisamment grand. Nous pouvons donc évaluer $\Phi_{\omega,k}(x,p)$ par le Lemme \ref{thm:Phikxy:eval} lorsque $(p,k)\in \sP_{\omega,x}\times\sK_{\omega,x}$. L'estimation \eqref{eq:Phikxy:eval} fournit ainsi
	\[S_{\omega,\iota}^*(x)=\bigg\{1+O\bigg(\frac{\log_3 x}{\log_2 x}\bigg)\bigg\}x\sum_{p\in\sP_{\omega,x}}\sum_{k\in\sK_{\omega,x}}\sum_{\substack{a\leq x/p\\ P^+(a)< p\\ \omega(a)=k}}\sum_{1\leq \ell\leq10\xi}\frac{g_\omega(p,\gr_{x/ap^\ell,k})\{\log_2 (x/ap^\ell)\}^{k-1}}{ap^{\ell+1}\log(x/ap^\ell)\Gamma(1+\gr_{x/ap^\ell,k})(k-1)!}\cdot\]
	De plus, sous les conditions $\Omega(n)\leq10\log_2 x$ et $p\leq \log x$, nous avons
	\begin{gather*}
		\log\frac{x}{ap^\ell}=\bigg\{1+O\bigg(\frac{\{\log_2 x\}^2}{\log x}\bigg)\bigg\}\log x,\quad\log_2\Big(\frac{x}{ap^\ell}\Big)=\Big\{1+O\Big(\frac{\log_2 x}{\log x}\Big)\Big\}\log_2 x,\\
		\gr_{x/ap^\ell,k}=\gr_{x,k}\Big\{1+O\Big(\frac{\log_2 x}{\log x}\Big)\Big\}\quad(1\leq\ell\leq \Omega(n)).
	\end{gather*}
	Enfin, le théorème des accroissements finis implique
	\begin{equation*}\label{eq:AF:g;Gamma}
		g_\omega(p,\gr_{x,k})=1+O\Big(\frac1{\log_3 x}\Big),\quad \Gamma(1+\gr_{x,k})=1+O\Big(\frac1{\log_3 x}\Big)\quad(x\geq 16,\, k\in\sK_{\omega,x}).
	\end{equation*}
	Il vient
	\begin{equation}\label{eq:eval:Siota:inter:1}
		S_{\omega,\iota}^*(x)=\frac{x}{\log x}\sum_{p\in\sP_{\omega,x}}\frac1{p^2}\sum_{k\in\sK_{\omega,x}}\frac{\xi^{k-1}\lambda_\omega(p,k)}{(k-1)!}\Big\{1+O\Big(\frac{1}{\log\xi}\Big)\Big\}\quad(x\geq 16),
	\end{equation}
	puisque
	\[\sum_{1\leq \ell\leq 10\xi}\frac{1}{p^{\ell+1}}=\frac{1}{p^2}\Big\{1+O\Big(\frac{1}{\log\xi}\Big)\Big\}\quad(p\in\sP_{\omega,x}).\]
	Par ailleurs, d'après \eqref{eq:majo:sum:reciprocals:smooth} et la formule de Stirling, nous avons la majoration
	\begin{align*}
		\sum_{\xi/\log\xi<k\leq\pi(p)}\frac{\xi^{k-1}\lambda_\omega(p,k)}{(k-1)!}&\ll\frac1\xi\sum_{\xi/\log\xi<k\leq\pi(p)}\bigg(\frac{\e\sqrt{\xi\{\log_2 p+M_0\}}}{k}\bigg)^{2k}\\
		&\ll\frac{\pi(p)}{\xi}\bigg(\frac{\{\log\xi\}^{3/2}}{\sqrt{\xi}}\bigg)^{2\xi/\log\xi}\ll\frac{p}{\log p}\cdot
	\end{align*}
	Il suit
	\begin{equation}\label{eq:eval:Siota:error:1}
		\frac{x}{\log x}\sum_{p\in\sP_{\omega,x}}\frac1{p^2}\sum_{\xi/\log\xi<k\leq\pi(p)}\frac{\xi^{k-1}\lambda_\omega(p,k)}{(k-1)!}\ll\frac{x}{\log x}\cdot
	\end{equation}
	Ce terme d'erreur est pleinement acceptable au vu de \eqref{eq:dekoninckluca}. En regroupant l'estimation \eqref{eq:eval:Siota:inter:1} et la majoration \eqref{eq:eval:Siota:error:1}, nous obtenons
	\[S_{\omega,\iota}^*(x)=\frac{x}{\log x}\sum_{p\in\sP_{\omega,x}}\frac1{p^2}\sum_{k\leq\pi(p)}\frac{\xi^{k-1}\lambda_\omega(p,k)}{(k-1)!}\Big\{1+O\Big(\frac{1}{\log\xi}\Big)\Big\}\quad(x\geq 16).\]
	La formule de Hankel (voir {\it e.g.} \cite[th. II.0.17]{tenenbaum_livre}) fournit alors
	\begin{equation}\label{eq:eval:Siotasstar:hankel:sum}
		S_{\omega,\iota}^*(x)=\frac{x\{1+O(1/\log\xi)\}}{2\pi i\xi\log x}\sum_{p\in\sP_{\omega,x}}\frac{1}{p^2}\int_{\sH_x}\bigg\{\sum_{k\leq\pi(p)}\Big(\frac{\xi}{s}\Big)^k\lambda_\omega(p,k)\bigg\}{\e^s}{\d}s.
	\end{equation}
	Puisque 
	\[k>\pi(p)\Rightarrow \lambda_\omega(p,k)=0\quad(p\geq 2),\]
	nous déduisons de \eqref{eq:eval:Siotasstar:hankel:sum} et de l'identité
	\[\sum_{k\geq 0}\lambda_\omega(p,k)z^k=F_\omega(p,z)\quad(z\in\C,\,p\geq 2),\]
	l'estimation
	\begin{equation}\label{eq:est:Somegaiotastarint}S^*_{\omega,\iota}(x)
	=\frac{x\{1+O(1/\log\xi)\}}{2\pi i \xi\log x}\int_{\sP_{\omega,x}}\frac{\d t}{t^2\log t}\int_{\sH_x}F_\omega\Big(t,\frac{\xi}{s}\Big)\e^s\d s,\end{equation}
	où nous avons utilisé le théorème des nombres premiers. La formule \eqref{eq:reduction:Sstar} résulte alors du changement de variables $u=\log t$ dans \eqref{eq:est:Somegaiotastarint}.
\end{proof}


Pour tout chemin $\gamma$ de $\C^*$, définissons
\begin{equation}\label{def:gJ;IE}
	I_{\gamma}(x):=\frac1{2\pi i}\int_{\gamma}\int_{\sU_x}\frac{F_\omega(\e^u,\xi/s)\e^{s-u}\d u\d s}{u}\quad(x\geq 3),
\end{equation}
de sorte que, au vu des définitions \eqref{def:alpha;j;eta}, nous avons
\begin{equation}\label{eq:formule:Siotasstar}
	S_{\omega,\iota}^{**}(x)=\frac{x}{\xi\log x}\big\{I_{\sC_x}(x)+I_{\sD_x^+}(x)+I_{\sD_x^-}(x)\big\}.
\end{equation}
Posons encore
\begin{equation}\label{def:Thetax;C;+C-}
	\Theta:=\big[-\pi/3,\pi/3\big],\quad\sC_x^+:=\{s\in\sC_x:\arg s\in\Theta\},\quad\sC_x^-:=\sC_x\smallsetminus\sC_x^+\quad(x\geq 3).
\end{equation}
Nous verrons que la contribution principale à $S_{\omega,\iota}^{**}(x)$ est obtenue le long de $\sC_x^+$. Nous évaluons $I_{\sC_x^+}$ à la section \ref{sec:mainterm}, puis, à la section \ref{sec:errorterm}, nous obtenons une majoration des termes d'erreur $I_{\sC_x^-}$, $I_{\sD_x^+}$ et $I_{\sD_x^-}$.



\section{Évaluation de $I_{\sC_x^+}$}\label{sec:mainterm}

Soit $\varepsilon>0$. Posons
\begin{equation}\label{def:gen_incomplete_gamma}
	\Gamma(z;a,b):=\int_{a}^{b}t^{z-1}\e^{-t}\d t\quad(a,b>0,\, |{\arg}\,z|\leq\tfrac\pi2-\varepsilon).
\end{equation}


\begin{lemma}\label{l:gen_incomplete_gamma} Soient $\varepsilon>0$ et $\psi:\R^+\to\R^+$ une fonction vérifiant $\lim_{t\to\infty}\psi(t)=+\infty$. Sous les conditions $r\geqslant 1$, $|\vartheta|\leq\tfrac{\pi}2-\varepsilon$, $z=r\e^{i\vartheta}$, nous avons uniformément
	\begin{equation}\label{eq:gen_incomplete_gamma}
		\Gamma\Big(z;\frac{r}{\psi(r)},r\psi(r)\Big)=\sqrt{2\pi}z^{z-1/2}\e^{-z}\Big\{1+O_{\varepsilon}\Big(\frac1{r}\Big)\Big\}\cdot
	\end{equation}
\end{lemma}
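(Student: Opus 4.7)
The plan is to write the claimed integral as the difference between the complete Gamma function and its two tails, apply Stirling's formula to the former, and show that the two tails are negligible relative to the target main term. Set
\[
T_1(z):=\int_0^{r/\psi(r)}t^{z-1}\e^{-t}\d t,\quad T_2(z):=\int_{r\psi(r)}^{\infty}t^{z-1}\e^{-t}\d t,
\]
so that $\Gamma(z;r/\psi(r),r\psi(r))=\Gamma(z)-T_1(z)-T_2(z)$. The classical complex Stirling formula, valid for $|\arg z|\leq \pi-\varepsilon$, yields immediately
\[
\Gamma(z)=\sqrt{2\pi}\,z^{z-1/2}\e^{-z}\Big\{1+O\Big(\frac1r\Big)\Big\}\quad (r\to\infty),
\]
uniformly in $|\vartheta|\leq \tfrac\pi2-\varepsilon$. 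The task therefore reduces to proving that $T_1$ and $T_2$ are, in absolute value, much smaller than $r^{-1}|z^{z-1/2}\e^{-z}|$.

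For both tails I would use the trivial identity $|t^{z-1}|=t^{r\cos\vartheta-1}$ ($t>0$), which reduces the estimates to real integrals. For $T_1$, a direct bound gives
\[
|T_1(z)|\leq\int_0^{r/\psi(r)}t^{r\cos\vartheta-1}\d t=\frac{(r/\psi(r))^{r\cos\vartheta}}{r\cos\vartheta}\cdot
\]
Dividing by the Stirling main term $|z^{z-1/2}\e^{-z}|=r^{r\cos\vartheta-1/2}\e^{-r\cos\vartheta-\vartheta r\sin\vartheta}$ and using $\cos\vartheta\geq\sin\varepsilon$, $|\vartheta\sin\vartheta|\leq \tfrac\pi2$, one finds a ratio of order $\psi(r)^{-r\sin\varepsilon}\e^{O(r)}/\sqrt r$, which, since $\psi(r)\to\infty$, decays faster than any negative power of $r$. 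For $T_2$, an integration by parts (or, equivalently, the standard tail estimate for the upper incomplete Gamma function) yields, for $r\psi(r)\geq 2r\cos\vartheta$,
\[
|T_2(z)|\leq 2(r\psi(r))^{r\cos\vartheta-1}\e^{-r\psi(r)},
\]
and the same kind of comparison with Stirling produces a ratio bounded by $r^{-1/2}\exp\bigl(r\{\log\psi(r)+1+\pi/2-\psi(r)\}\bigr)$, again super-polynomially small in $r$.

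Combining these three pieces gives \eqref{eq:gen_incomplete_gamma}. The only genuinely delicate point is the uniformity in $\vartheta$: the Stirling main term carries an extra factor $\e^{-\vartheta r\sin\vartheta}$ which, for $\vartheta$ close to $\pm\pi/2-\varepsilon$, is of order $\e^{\pm(\pi/2-\varepsilon)r}$ and must not be swallowed by the tail estimates. This is handled by the hypothesis $\psi(r)\to\infty$, which ensures that both tails beat any factor of the form $\e^{Cr}$ for every fixed constant $C$; consequently, the constants implied by $O(1/r)$ depend on $\varepsilon$ only through $\sin\varepsilon$ and are uniform on the stated $\vartheta$-range. No further technicality is expected.
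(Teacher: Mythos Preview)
Your proof is correct and follows a genuinely different route from the paper. The paper performs the change of variables $t=z\e^s$ to write $z^{-z}\e^{z}\Gamma(z;r/\psi(r),r\psi(r))$ as a contour integral, shifts the contour via Cauchy's theorem to the real segment $[-\log\psi(r),\log\psi(r)]$ (the two short vertical connecting pieces being $O(r^{-3/2})$), and then applies a direct Laplace/saddle-point argument on that segment to extract the main term $\sqrt{2\pi/z}$. In effect, the paper re-derives Stirling's approximation for the truncated integral from scratch. You instead decompose the truncated integral as $\Gamma(z)-T_1(z)-T_2(z)$, invoke the classical complex Stirling formula for $\Gamma(z)$ as a black box, and dispose of the real tails by elementary pointwise bounds that yield super-exponential decay against the main term. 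Your approach is shorter and delegates the genuinely analytic step to a standard result; the paper's approach is self-contained and makes the origin of the factor $\sqrt{2\pi/z}$ transparent. Both arguments handle the uniformity in $\vartheta$ in the same way, through the lower bound $\cos\vartheta\geq\sin\varepsilon$, and both implicitly rely on $\psi(r)\to\infty$ only through the fact that, for $r$ large, $\psi(r)$ eventually dominates any fixed constant.
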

\begin{proof}
	Le changement de variables $t=z\e^s$ dans \eqref{def:gen_incomplete_gamma} permet d'écrire
	\begin{equation}\label{def:Gammastar}
		\Gamma^*(z):=z^{-z}\e^z\Gamma\Big(z;\frac{r}{\psi(r)},r\psi(r)\Big)=\int_{\log\{r/z\psi(r)\}}^{\log\{r\psi(r)/z\}}\e^{-z(\e^s-s-1)}\d s,
	\end{equation}
	où le logarithme complexe est pris en détermination principale. Puisque $z=r{\e^{i\vartheta}}$, nous avons
	\[\Gamma^*(z)=\int_{-\log\psi(r)-i\vartheta}^{\log\psi(r)-i\vartheta}\e^{-z(\e^s-s-1)}\d s.\]
	Comme $s\mapsto\e^{-z(\e^s-s-1)}$ est une fonction entière, le théorème intégral de Cauchy implique
	\begin{equation}\label{eq:cutting:Gammastar:cauchy}
		\Gamma^*(z)=\bigg\{\int_{-\log\psi(r)-i\vartheta}^{-\log\psi(r)}+\int_{-\log\psi(r)}^{\log\psi(r)}+\int_{\log\psi(r)}^{\log\psi(r)-i\vartheta}\bigg\}\e^{-z(\e^s-s-1)}\d s=:G_1(z)+G_2(z)+G_3(z).
	\end{equation}
	\par Pour $s=\log\psi(r)-i\tau\ (0\leq\tau\leq\vartheta)$, nous avons
	\begin{align*}
		\big|{\e^{-z(\e^s-s-1)}}\big|&=\exp\big(\Re\big\{-r{\e^{i\vartheta}}\big(\psi(r){\e^{-i\tau}}-\log\psi(r)+i\tau-1\big)\big\}\big)\\
		&=\e^{-r\psi(r)\cos(\vartheta-\tau)+r\cos\vartheta\{1+\log\psi(r)\}+r\tau\sin\vartheta}\ll\e^{-r\{\psi(r)\cos\vartheta-\log\psi(r)-\vartheta-1\}},
	\end{align*}
	uniformément en $\tau$. Il vient
	\begin{equation}\label{eq:majo:error2}
		G_3(z)\ll\vartheta\e^{-r\{\psi(r)\cos\vartheta-\log\psi(r)-\vartheta-1\}}\ll\frac{1}{r^{3/2}}\cdot
	\end{equation}
	\par De manière analogue, nous avons
	\begin{equation}\label{eq:majo:error1}
		G_1(z)\ll\vartheta\e^{-r\{\log\psi(r)-1\}}\ll\frac{1}{r^{3/2}}\cdot
	\end{equation}
	En insérant \eqref{eq:majo:error2} et \eqref{eq:majo:error1} dans \eqref{eq:cutting:Gammastar:cauchy}, nous obtenons
	\begin{equation}\label{eq:eval:Gammastar:inter}
		\Gamma^*(z)=G_2(z)+O\Big(\frac1{r^{3/2}}\Big)\cdot
	\end{equation}\par
	Désignons par $G_2^+(z)$ la contribution à $G_2(z)$ de l'intervalle $[-r^{-1/3},r^{-1/3}]$ et posons $G_2^-(z):=G_2(z)-G_2^+(z)$. Puisque
	\[\e^v-v-1=\tfrac12v^2+\tfrac16v^3+O(v^4)\quad(v\ll 1),\]
	nous obtenons, par symétrie de l'intervalle d'intégration,
	\begin{equation}\label{eq:eval:G+}
		G_2^+(z)=\int_{-r^{-1/3}}^{r^{-1/3}}\e^{-zv^2/2}\big\{1+O\big(r v^4+r^2v^6\big)\big\}\d v=\sqrt{\frac{2\pi}{z}}\Big\{1+O\Big(\frac{1}{r}\Big)\Big\}\cdot
	\end{equation}
	Par ailleurs,
	\begin{equation}\label{eq:eval:G-}
		G_2^-(z)\ll\int_{|v|\geq r^{-1/3}}\e^{-r(\e^v-v-1)\cos\vartheta}\d v\ll \e^{-c_\varepsilon r^{1/3}}\ll_\varepsilon\frac1{r^{3/2}}\CommaBin
	\end{equation}
	de sorte que la formule \eqref{eq:gen_incomplete_gamma} découle des estimations \eqref{def:Gammastar}, \eqref{eq:eval:Gammastar:inter}, \eqref{eq:eval:G+} et \eqref{eq:eval:G-}.\qedhere
\end{proof}


Rappelons la définition de $\varrho_x$ en \eqref{def:Hnu;H0}.


\begin{lemma} Nous avons
	\begin{equation}\label{eq:formule:IC+}
		I_{\sC_x^+}(x)=\frac{F_\omega({\e^{\varrho_x}},\varrho_x)\e^{\xi/\varrho_x-\varrho_x}\sqrt{\xi}}{\sqrt 2\varrho_x}\Big\{1+O\Big(\frac1{\log_3 x}\Big)\Big\}\quad (x\geq 16).
	\end{equation}
\end{lemma}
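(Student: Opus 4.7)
Le plan consiste à évaluer $I_{\sC_x^+}(x)$ par une méthode du col itérée, au voisinage du point selle $(u, s) = (\varrho_x, \xi/\varrho_x)$, lequel est déterminé par la définition implicite de $\varrho_x$ à travers \eqref{def:Hnu;H0}. Nous évaluerons d'abord l'intégrale en $u$ pour $s$ fixé sur $\sC_x^+$, puis appliquerons la méthode du col à l'intégrale résultante en $s$.

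Pour la première étape, on factorise $F_\omega(e^u, \xi/s) = F_\omega(e^{\varrho_x}, \xi/s) \cdot Q(u, s)$ où $Q(u, s)$ désigne le produit (ou son inverse selon le signe de $u-\varrho_x$) sur les nombres premiers compris entre $e^{\min(u, \varrho_x)}$ et $e^{\max(u, \varrho_x)}$. Par le théorème des nombres premiers sous forme de Mertens $\sum_{q \leq y}1/(q-1) = \log_2 y + M_0 + O(e^{-c\sqrt{\log y}})$, et compte tenu du développement $\log(1+z/(q-1)) = z/(q-1) + O(|z|^2/q^2)$ valable pour $q > e^{\varrho_x} \gg |z| = \varrho_x$, on établit uniformément pour $u \in \sU_x$ et $s \in \sC_x^+$ l'approximation $Q(u, s) = (u/\varrho_x)^{\xi/s}\{1 + o(1)\}$. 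L'intégrale intérieure se réduit alors à $F_\omega(e^{\varrho_x}, \xi/s)\varrho_x^{-\xi/s}\int_{\sU_x}u^{\xi/s - 1}e^{-u}\d u$, et le Lemme \ref{l:gen_incomplete_gamma} appliqué avec $r = \varrho_x$, $\psi(r) = (\log r)^2$ et $\vartheta = -\arg s$ (qui satisfait $|\vartheta| \leq \pi/3 \leq \pi/2 - \varepsilon$) fournit son évaluation au premier ordre.

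Pour la seconde étape, l'intégrale prend la forme $\int_{\sC_x^+}A(s)e^{\Phi(s)}\d s$ avec phase $\Phi(s) := s + (\xi/s)[\log(\xi/(s\varrho_x)) - 1] + \log F_\omega(e^{\varrho_x}, \xi/s)$ et amplitude lisse $A(s) \asymp (\xi/s)^{-1/2}$. L'équation $\Phi'(s_0) = 0$ coïncide, modulo \eqref{eq:estimation_rho}, avec la définition implicite de $\varrho_x$, ce qui place le col en $s_0 = \xi/\varrho_x$. On calcule $\Phi(s_0) = \xi/\varrho_x - \varrho_x + \log F_\omega(e^{\varrho_x}, \varrho_x)$ et $\Phi''(s_0) = 2\varrho_x/\xi\{1 + O(1/\log\xi)\}$, la correction provenant des dérivées secondes logarithmiques de $F_\omega(e^{\varrho_x}, z)$ étant dominée par $\sum_q 1/(q-1+\varrho_x)^2 \ll 1/(\varrho_x\log\varrho_x)$. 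La formule classique du col, compte tenu de la tangente $is_0$ au contour circulaire, produit un facteur gaussien $i\sqrt{\pi\xi/\varrho_x}$ et l'assemblage de tous les facteurs livre \eqref{eq:formule:IC+}.

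La principale difficulté réside dans le contrôle uniforme du terme d'erreur $O(1/\log_3 x)$, équivalent à $O(1/\log\xi)$. Il faut d'une part justifier l'approximation de la première étape pour $s$ complexe le long de tout $\sC_x^+$ — ce qui demande une version de Mertens avec un terme d'erreur exponentiellement décroissant uniformément sur $\sU_x$ —, et d'autre part établir que la contribution hors d'un voisinage du col, de taille $|s-s_0| \ll \sqrt{\xi/\varrho_x}/\sqrt{\log\varrho_x}$, est négligeable. Cette dernière estimation repose sur une analyse précise de la partie réelle de $\Phi$ le long de $\sC_x$, exploitant la concavité au col et la géométrie du contour circulaire, particulièrement délicate aux extrémités où $|\arg s|$ atteint $\pi/3$.
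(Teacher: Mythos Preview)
Your approach is essentially the same as the paper's: both proceed by a Mertens-type approximation reducing the $u$-integral to an incomplete Gamma function (Lemme~\ref{l:gen_incomplete_gamma}), followed by a saddle-point evaluation at $s_0=\xi/\varrho_x$. The paper carries out the second step by the explicit parametrisation $s=(\xi/\varrho_x)\e^{i\vartheta}$ and a Taylor expansion of the phase to third order in~$\vartheta$, while you invoke the classical saddle formula after computing $\Phi''(s_0)$; the substance and all key quantities match, and your identification of the remaining technical points (uniform Mertens error on $\sU_x$, decay away from the saddle along~$\sC_x^+$) is accurate.
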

\begin{proof}
	D'après la définition \eqref{def:gJ;IE}, nous avons
	\[I_{\sC_x^+}(x)=\frac{1}{2\pi i}\int_{\sC_x^+}\int_{\sU_x}\frac{F_\omega(\e^u,\xi/s)\e^{s-u}\d u\d s}{u}\cdot\]
	Rappelons la définition de $\Theta$ en \eqref{def:Thetax;C;+C-}, posons $z_\vartheta=z_{x,\vartheta}:=\varrho_x\e^{-i\vartheta}\ (\vartheta\in\R)$ et notons que $\sC_x^+=\{\xi/z_\vartheta:\vartheta\in\Theta\}$, de sorte que
	\[I_{\sC_x^+}(x)=\frac{\xi}{2\pi\varrho_x}\int_{\Theta}\int_{\sU_x}\frac{F_\omega(\e^u,z_\vartheta)\e^{\xi/z_\vartheta+i\vartheta-u}\d u\d\vartheta}{u}\cdot\]
	Avec le changement de variables $u=\varrho_x(1+t)$, nous obtenons
	\begin{equation}\label{eq:Ic+:inter:2}
		I_{\sC_x^+}(x)=\frac{\xi\e^{-\varrho_x}}{2\pi\varrho_x}\int_{\Theta}\e^{\xi/z_\vartheta+i\vartheta}\d\vartheta\int_{1/(\log\varrho_x)^2-1}^{(\log\varrho_x)^2-1}\frac{F_\omega(\e^{\varrho_x\{1+t\}},z_\vartheta)\e^{-\varrho_x t}\d t}{1+t}\cdot
	\end{equation}
	Posons
	\begin{equation*}\label{def:varphi}
		\varphi(\vartheta,v):=\log F_\omega(\e^v,z_\vartheta)=\sum_{q<\e^v}\log\Big(1+\frac{z_\vartheta}{q-1}\Big)\quad(\vartheta\in\Theta,\,v\geq0).
	\end{equation*}
	Pour $0\leq t\leq (\log\varrho_x)^2-1$ et $x$ assez grand, nous avons 
	\begin{equation}\label{eq:est:diff:Avartheta}
		\begin{aligned}
			\varphi(\vartheta,\varrho_x\{1+t\})-\varphi(\vartheta,\varrho_x)&=\sum_{\e^{\varrho_x}\leq q<\e^{\varrho_x(1+t)}}\log\Big(1+\frac{z_\vartheta}{q-1}\Big)\\
			&=z_\vartheta\sum_{\e^{\varrho_x}\leq q<\e^{\varrho_x(1+t)}}\frac1{q-1}+O\bigg(\varrho_x^2\sum_{q\geq\e^{\varrho_x}}\frac1{\{q-1\}^2}\bigg)\\
			&=z_\vartheta\big\{\log(1+t)+O\big(\e^{-\sqrt{\varrho_x}}+\varrho_x\e^{-\varrho_x}\big)\big\},
		\end{aligned}
	\end{equation}
	où la dernière égalité découle d'une application d'une forme forte du théorème de Mertens. Lorsque $1/(\log\varrho_x)^2-1\leq t\leq0$, nous obtenons
	\[\varphi(\vartheta,\varrho_x\{1+t\})-\varphi(\vartheta,\varrho_x)=z_\vartheta\big\{\log(1+t)+O\big(\e^{-\sqrt{\varrho_x}/\log\varrho_x}\big)\big\}.\]
	Définissons
	\begin{equation}\label{def:atheta;Atheta}
		a_\vartheta(t):=\frac{F_\omega(\e^{\varrho_x\{1+t\}},z_\vartheta)\e^{-\varrho_x t}}{1+t},\quad A_\vartheta(t):=\log a_{\vartheta}(t)\quad(\vartheta\in\Theta,\, t>-1).
	\end{equation}
	Nous déduisons de \eqref{eq:est:diff:Avartheta} que, pour $1/(\log\varrho_x)^2-1\leq t\leq (\log\varrho_x)^2-1$,
	\[A_\vartheta(t)-A_\vartheta(0)=(z_\vartheta-1)\log(1+t)-\varrho_x t+O\big(\varrho_x\e^{-\sqrt{\varrho_x}/\log\varrho_x}\big),\]
	soit
	\begin{equation}\label{eq:est:rapport:avartheta}
		\frac{a_\vartheta(t)}{a_\vartheta(0)}=(1+t)^{z_\vartheta-1}\e^{-\varrho_x t}\big\{1+O\big(\varrho_x\e^{-\sqrt{\varrho_x}/\log\varrho_x}\big)\big\}\quad \bigg(\frac{1}{\{\log\varrho_x\}^2}-1\leq t\leq \{\log\varrho_x\}^2-1\bigg).
	\end{equation}
	Posons
	\begin{equation*}\label{def:Ktheta}
		K_x(\vartheta):=\int_{1/(\log\varrho_x)^2-1}^{(\log\varrho_x)^2-1}(1+t)^{z_\vartheta-1}\e^{-\varrho_x t}\d t\quad(x\geq 3,\, \vartheta\in\Theta).
	\end{equation*}
	D'après \eqref{eq:Ic+:inter:2} et \eqref{eq:est:rapport:avartheta}, nous avons
	\begin{equation*}
		I_{\sC_x^+}(x)=\frac{\xi\e^{-\varrho_x}\{1+O(\varrho_x\e^{-\sqrt{\varrho_x}/\log\varrho_x})\}}{2\pi\varrho_x}\int_{\Theta}F_\omega(\e^{\varrho_x},z_\vartheta)\e^{\xi/z_\vartheta+i\vartheta}K_x(\vartheta)\d\vartheta.
	\end{equation*}
	Le changement de variables $v=\varrho_x(1+t)$ fournit
	\begin{equation}\label{eq:est:Ktheta}
		K_x(\vartheta)=\frac{\e^{\varrho_x}}{\varrho_x^{z_\vartheta}}\int_{\varrho_x/(\log\varrho_x)^2}^{\varrho_x(\log\varrho_x)^2}v^{z_\vartheta-1}\e^{-v}\d v=\frac{\{1+O(1/\varrho_x)\}\sqrt{2\pi}z_\vartheta^{z_\vartheta-1/2}\e^{\varrho_x-z_\vartheta}}{\varrho_x^{z_\vartheta}}\CommaBin
	\end{equation}
	où nous avons appliqué \eqref{eq:gen_incomplete_gamma} avec $\psi=\log^2$. Il vient,
	\begin{equation}\label{eq:eval:IC:inter:1}
		I_{\sC_x^+}(x)=\frac{\xi\{1+O(1/\varrho_x)\}}{\sqrt{2\pi}\varrho_x^{3/2}}\int_{\Theta}F_\omega(\e^{\varrho_x},z_\vartheta)\e^{\xi/z_\vartheta-z_\vartheta\{1+i\vartheta\}+3i\vartheta/2}\d\vartheta.
	\end{equation}
	\par Il reste à évaluer l'intégrale en $\vartheta$. Puisque $|{\e}^{-i\vartheta}-1|\leq1\ (\vartheta\in\Theta)$, nous avons 
	\begin{equation}\label{eq:eval:diff:varphi}
			\varphi(\vartheta,\varrho_x)-\varphi(0,\varrho_x)
			=(\e^{-i\vartheta}-1)\sum_{q<\e^{\varrho_x}}\frac{\varrho_x}{q-1+\varrho_x}+O\bigg(\varrho_x^2\vartheta^2\sum_{q<\e^{\varrho_x}}\frac{1}{q^2+\varrho_x^2}\bigg)\quad(\vartheta\in\Theta).
	\end{equation}
	Rappelons que, d'après \eqref{eq:estimation_rho}, $\varrho_x$ vérifie la relation
	\begin{equation*}\label{relation_col}
		\sum_{q<\e^{\varrho_x}}\frac{\varrho_x}{q-1+\varrho_x}=\frac{\xi}{\varrho_x}+O\big(\varrho_x{\e^{-\varrho_x}}\big).
	\end{equation*}
	En scindant la somme du terme d'erreur de \eqref{eq:eval:diff:varphi} à $q=\varrho_x$, nous obtenons
	\begin{equation*}\label{eq:eval:diff:varphi:2}
			\varphi(\vartheta,\varrho_x)-\varphi(0,\varrho_x)
			=\frac{\xi}{\varrho_x}\bigg\{{\e}^{-i\vartheta}-1+O\bigg(\frac{\vartheta^2}{\{\log\varrho_x\}^2}+\frac{|\vartheta|\e^{-\varrho_x}}{\log\varrho_x}\bigg)\bigg\},
	\end{equation*}
	puisque $\varrho_x^2\log\varrho_x\asymp\xi$, au vu de \eqref{eq:approximation:rho}.
	Définissons
	\[\Theta_x^+:=\bigg[-\Big(\frac{\xi}{\varrho_x}\Big)^{-1/3},\Big(\frac{\xi}{\varrho_x}\Big)^{-1/3}\bigg],\quad\Theta_x^-:=\Theta\smallsetminus\Theta_x^+\quad(x\geq 3).\]
	La formule intégrale de Cauchy implique
	\[\gs_{x,m}=\gs_m:=\frac1{(-i)^m}\bigg[\frac{\d^m\varphi(\vartheta,\varrho_x)}{\d\vartheta^m}\bigg]_{\vartheta=0}=\frac{\xi}{\varrho_x}\bigg\{1+O_{m}\bigg(\frac{1}{\{\log\varrho_x\}^2}\bigg)\bigg\}\quad(x\geq 3,\, m\geq 1).\]
	Puisque $\varphi'(0,\varrho_x)=-i\xi/\varrho_x+O(\varrho_x\e^{-\varrho_x})$, par définition de $\varrho_x$, il vient
	\[\varphi(\vartheta,\varrho_x)-\varphi(0,\varrho_x)=-\frac{i\xi\vartheta}{\varrho_x}-\tfrac12\gs_2\vartheta^2+\tfrac16i\gs_3\vartheta^3+O\bigg(\varrho_x|\vartheta|\e^{-\varrho_x}+\frac{\xi\vartheta^4}{\varrho_x}\bigg)\quad(\vartheta\in\Theta_x^+).\]
	Posons
	\[b(\vartheta):=F_\omega(\e^{\varrho_x},z_\vartheta)\e^{\xi/z_\vartheta-z_\vartheta\{1+i\vartheta\}+3i\vartheta/2},\quad B(\vartheta):=\log b(\vartheta)\quad(\vartheta\in\Theta).\]
	Il vient,
	\begin{align*}
		B(\vartheta)-B(0)&=\frac{\xi}{z_\vartheta}-z_\vartheta\{1+i\vartheta\}+\tfrac32i\vartheta-\frac{i\xi\vartheta}{\varrho_x}-\tfrac12\gs_2\vartheta^2+\tfrac16i\gs_3\vartheta^3-\frac{\xi}{\varrho_x}+\varrho_x+O\Big(\varrho_x|\vartheta|\e^{-\varrho_x}+\frac{\xi\vartheta^4}{\varrho_x}\Big)\\
		&=\tfrac32i\vartheta-\tfrac12\Big\{\frac{\xi}{\varrho_x}+\gs_2+\varrho_x\Big\}\vartheta^2-\tfrac16i\Big\{\frac\xi{\varrho_x}-\gs_3-2\varrho_x\Big\}\vartheta^3+O\Big(\varrho_x|\vartheta|\e^{-\varrho_x}+\frac{\xi\vartheta^4}{\varrho_x}\Big)\ (\vartheta\in\Theta_x^+),
	\end{align*}
d'où, en posant $\gs_2^*:=\xi/\varrho_x+\gs_2+\varrho_x$ et $\gs_3^*:=\xi/\varrho_x-\gs_3-2\varrho_x\ (x\geq 3)$,
	\[\frac{b(\vartheta)}{b(0)}=\e^{-\gs_2^*\vartheta^2/2}\big\{1+\tfrac32i\vartheta+O(\vartheta^2)\big\}\Big\{1+\tfrac16i\gs_3^*\vartheta^3+O\Big(\frac{\xi^2\vartheta^6}{\varrho_x^2}\Big)\Big\}\Big\{1+O\Big(\varrho_x|\vartheta|\e^{-\varrho_x}+\frac{\xi\vartheta^4}{\varrho_x}\Big)\Big\}\ (\vartheta\in\Theta_x^+).\]
	En intégrant sur $\Theta_x^+$, nous obtenons, grâce à la symétrie de l'intervalle d'intégration,
	\begin{equation}\label{eq:eval:Theta+}
		\begin{aligned}
			\int_{\Theta_x^+}\frac{b(\vartheta)\d\vartheta}{b(0)}&=\int_{\Theta_x^+}\e^{-\gs_2^*\vartheta^2/2}\Big\{1+O\Big(\vartheta^2+\frac{\xi\vartheta^4}{\varrho_x}+\frac{\xi^2\vartheta^6}{\varrho_x^2}\Big)\Big\}\d\vartheta\\
			&=\sqrt{\frac{2\pi}{\gs_2^*}}\Big\{1+O\Big(\frac1{\varrho_x}\Big)\Big\}=\sqrt{\frac{\pi\varrho_x}{\xi}}\Big\{1+O\Big(\frac1{\log\xi}\Big)\Big\}\cdot
		\end{aligned}
	\end{equation}
	Ainsi,
	\begin{equation}\label{eq:eval:Theta+:final}
		\int_{\Theta_x^+}F_\omega(\e^{\varrho_x},z_\vartheta)\e^{\xi/z_\vartheta-z_\vartheta\{1+i\vartheta\}+3i\vartheta/2}\d\vartheta=\frac{\sqrt{\pi\varrho_x}F_\omega(\e^{\varrho_x},\varrho_x)\e^{\xi/\varrho_x-\varrho_x}}{\sqrt\xi}\Big\{1+O\Big(\frac1{\log\xi}\Big)\Big\}\cdot
	\end{equation}
	Il reste à majorer
	\[\sE(x):=\int_{\Theta_x^-}F_\omega(\e^{\varrho_x},z_\vartheta)\e^{\xi/z_\vartheta-z_\vartheta\{1+i\vartheta\}+3i\vartheta/2}\d\vartheta\quad(x\geq 3).\]
	En majorant $|F_\omega(\e^{\varrho_x},\varrho_x\e^{-i\vartheta})|$ comme dans \cite[lemme 3]{bib:erdos}, nous déduisons l'existence d'une constante absolue $c>0$ telle que
	\[\frac{|F_\omega(\e^{\varrho_x},\varrho_x\e^{-i\vartheta})|}{F_\omega(\e^{\varrho_x},\varrho_x)}\ll\e^{-c\xi\vartheta^2/\varrho_x}\quad(\vartheta\in\Theta).\]
	Ainsi,
	\begin{align*}
		|{\sE}(x)|&\ll F_\omega(\e^{\varrho_x},\varrho_x)\int_{\Theta_x^-}\e^{-c\xi\vartheta^2/\varrho_x}\e^{\Re\{\xi/z_\vartheta-z_\vartheta(1+i\vartheta)\}}\d\vartheta\\
		&\ll F_\omega(\e^{\varrho_x},\varrho_x)\e^{\xi/\varrho_x-\varrho_x}\int_{\Theta_x^-}\e^{-c\xi\vartheta^2/\varrho_x}\e^{\xi(\cos\vartheta-1)/\varrho_x+\varrho_x(1-\vartheta\sin\vartheta-\cos\vartheta)}\d\vartheta.
	\end{align*}
	Puisque $1-\vartheta\sin\vartheta-\cos\vartheta\sim-\frac12\vartheta^2<0$ $(\vartheta\in\Theta_x^-$), il vient
	\begin{equation}\label{eq:eval:Theta-}
		|{\sE}(x)|\ll F_\omega(\e^{\varrho_x},\varrho_x)\e^{\xi/\varrho_x-\varrho_x}\int_{\Theta_x^-}\e^{-c\xi\vartheta^2/\varrho_x}\d\vartheta\ll F_\omega(\e^{\varrho_x},\varrho_x)\e^{\xi/\varrho_x-\varrho_x}\Big(\frac{\varrho_x}{\xi}\Big)^{3/2}\cdot
	\end{equation}
	En regroupant les estimations \eqref{eq:eval:IC:inter:1}, \eqref{eq:eval:Theta+:final} et \eqref{eq:eval:Theta-}, nous obtenons bien \eqref{eq:formule:IC+}.
\end{proof}




\section{Preuve du Théorème \ref{thm:somme_inverse:sans_multiplicite}: estimation des termes d'erreur}\label{sec:errorterm}


\begin{lemma}
	Nous avons
	\begin{equation}\label{eq:majoration:IC-}
		I_{\sC_x^-}(x)\ll F_\omega(\e^{\varrho_x},\varrho_x)\e^{\xi/2\varrho_x-\varrho_x}\xi^{1/4}(\log\xi)^{3/4}\quad(x\geq 10^7).
	\end{equation}
\end{lemma}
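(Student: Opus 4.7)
La stratégie consiste à reproduire sur $\sC_x^-$ la méthode développée à la Section \ref{sec:mainterm} pour $I_{\sC_x^+}(x)$, puis à exploiter l'inégalité $\cos\vartheta \leq \tfrac12$ valide pour $|\vartheta|\in[\pi/3,\pi/2]$ afin de faire apparaître le facteur supplémentaire $e^{-\xi/(2\varrho_x)}$ par rapport au terme principal.

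Dans un premier temps, paramétrons $s = \xi e^{i\vartheta}/\varrho_x$ avec $\vartheta \in \Theta^- := [-\pi/2,-\pi/3] \cup [\pi/3,\pi/2]$, de sorte que $\xi/s = z_\vartheta = \varrho_x e^{-i\vartheta}$. L'intégration intérieure sur $\sU_x$ se mène mot pour mot comme dans la preuve de \eqref{eq:formule:IC+}~: changement de variable $u = \varrho_x(1+t)$, estimation forte de Mertens pour l'accroissement de $\varphi(\vartheta,\cdot)$, puis application du Lemme \ref{l:gen_incomplete_gamma} au facteur $K_x(\vartheta)$. Nous aboutissons ainsi à l'analogue de \eqref{eq:eval:IC:inter:1}, à savoir
\[
I_{\sC_x^-}(x) \ll \frac{\xi}{\varrho_x^{3/2}} \int_{\Theta^-} |F_\omega(e^{\varrho_x}, z_\vartheta)|\, e^{\Re(\xi/z_\vartheta - z_\vartheta(1+i\vartheta))}\,d\vartheta.
\]

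Il suffit alors d'insérer la majoration standard $|F_\omega(e^{\varrho_x}, z_\vartheta)| \ll F_\omega(e^{\varrho_x}, \varrho_x)\, e^{-c\xi\vartheta^2/\varrho_x}$ déjà utilisée à la Section \ref{sec:mainterm}, et d'observer que
\[
\Re(\xi/z_\vartheta - z_\vartheta(1+i\vartheta)) = \frac{\xi\cos\vartheta}{\varrho_x} - \varrho_x(\cos\vartheta + \vartheta\sin\vartheta).
\]
Sur $\Theta^-$, nous avons $\cos\vartheta \leq \tfrac12$ et, puisque la dérivée $\vartheta\cos\vartheta$ de $\cos\vartheta + \vartheta\sin\vartheta$ est positive sur $[\pi/3,\pi/2]$, il vient $\cos\vartheta + \vartheta\sin\vartheta \geq \tfrac12 + \tfrac{\pi\sqrt 3}{6} > 1$. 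L'exposant est donc majoré par $\xi/(2\varrho_x) - \varrho_x$ et, l'intégrale résiduelle $\int_{\Theta^-} e^{-c\xi\vartheta^2/\varrho_x}\,d\vartheta$ étant bornée, nous obtenons
\[
I_{\sC_x^-}(x) \ll \frac{\xi}{\varrho_x^{3/2}}\, F_\omega(e^{\varrho_x}, \varrho_x)\, e^{\xi/(2\varrho_x) - \varrho_x}.
\]
La Proposition \ref{prop:dev_asymp:rho} fournit alors $\xi/\varrho_x^{3/2} \asymp \xi^{1/4}(\log\xi)^{3/4}$, ce qui conclut.

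Le seul point délicat est que le Lemme \ref{l:gen_incomplete_gamma} n'est énoncé que pour $|\arg z| \leq \pi/2-\varepsilon$, alors que $\Theta^-$ atteint les valeurs $\pm\pi/2$. Ceci se contourne en isolant un mince voisinage $|\vartheta| \in [\pi/2-\varepsilon,\pi/2]$ des extrémités, dont la contribution se majore trivialement grâce à la décroissance exponentielle $e^{-c\xi\vartheta^2/\varrho_x}$ fournie par la borne sur $F_\omega$, couplée à la borne directe $e^{\Re(\xi/z_\vartheta)} \leq e^{\xi/(2\varrho_x)}$ valable sur ce segment.
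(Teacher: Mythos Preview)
Your argument is correct in substance but takes a heavier route than the paper's. The paper reverses the order of integration: it first bounds the $\vartheta$-integral using only the trivial inequality $|F_\omega(\e^u,z_\vartheta)|\leq F_\omega(\e^u,\varrho_x)$ together with $\cos\vartheta\leq\tfrac12$ on $T:=\{\pi/3<|\vartheta|\leq\pi/2\}$, yielding
\[
|J_1(u)|:=\Big|\int_T F_\omega(\e^u,z_\vartheta)\e^{\xi/z_\vartheta+i\vartheta}\d\vartheta\Big|\ll F_\omega(\e^u,\varrho_x)\e^{\xi/2\varrho_x}.
\]
The remaining $u$-integral is then $J_2(x):=\int_{\sU_x}F_\omega(\e^u,\varrho_x)\e^{-u}\,\d u/u$, which is handled by \eqref{eq:est:rapport:avartheta} and \eqref{eq:est:Ktheta} at $\vartheta=0$ only. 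Thus Lemma~\ref{l:gen_incomplete_gamma} is invoked solely for real argument, the endpoint $|\vartheta|=\pi/2$ causes no difficulty, and the Erd\H os--Tenenbaum decay bound on $|F_\omega(\e^{\varrho_x},z_\vartheta)|$ is not needed at all in this lemma. Your path, by contrast, pushes the full Section~\ref{sec:mainterm} machinery (complex $K_x(\vartheta)$, then the $\vartheta$-integral with the Erd\H os--Tenenbaum bound) onto $\Theta^-$, which creates the very endpoint obstruction you then have to patch.

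Regarding that patch: your description is slightly off. The decay $\e^{-c\xi\vartheta^2/\varrho_x}$ is a bound on $|F_\omega(\e^{\varrho_x},z_\vartheta)|$, a quantity that only emerges \emph{after} performing the $u$-integration you are precisely trying to avoid on the arc $|\vartheta|\in[\pi/2-\varepsilon,\pi/2]$. What actually closes the argument on that arc is to bound the inner integral by $\int_{\sU_x}F_\omega(\e^u,\varrho_x)\e^{-u}\,\d u/u=J_2(x)\ll F_\omega(\e^{\varrho_x},\varrho_x)\e^{-\varrho_x}/\sqrt{\varrho_x}$ via the trivial bound $|F_\omega(\e^u,z_\vartheta)|\leq F_\omega(\e^u,\varrho_x)$, and then use $\cos\vartheta\leq\sin\varepsilon<\tfrac12$. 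But that is exactly the paper's method, restricted to the small arc; once you see this, it is cleaner to apply it on all of $\Theta^-$ from the outset.
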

\begin{proof}
	Rappelons la notation $z_\vartheta=\varrho_x\e^{-i\vartheta}$ et posons 
	\[T:=\{\vartheta:\pi/3<|\vartheta|\leqslant \pi/2\},\quad J_1(u):=\int_{T}F_\omega(\e^u,z_\vartheta)\e^{\xi/z_\vartheta+i\vartheta}\d\vartheta\quad(u\in\sU_x).\]
	D'après la définition de $I_{\sC_x^-}$ en \eqref{def:gJ;IE}, nous avons
	\begin{equation}\label{eq:redef:IC-}
		I_{\sC_x^-}(x)=\frac{\xi}{2\pi\varrho_x}\int_{\sU_x}\frac{J_1(u)\e^{-u}\d u}{u}\cdot
	\end{equation}
	Notons que, pour $\vartheta\in T$,
	\[\big|F_\omega(\e^u,z_\vartheta)\big|=\prod_{q<\e^u}\bigg|1+\frac{\varrho_x\e^{-i\vartheta}}{q-1}\bigg|\leq\prod_{q<\e^u}\Big(1+\frac{\varrho_x}{q-1}\Big)=F_\omega(\e^u,\varrho_x).\]
	Ainsi,
	\begin{equation}\label{eq:majo:E1}
		|J_1(u)|\leq F_\omega(\e^{u},\varrho_x)\int_T\e^{\xi\cos\vartheta/\varrho_x}\d\vartheta\ll F_\omega(\e^u,\varrho_x)\e^{\xi/2\varrho_x}.
	\end{equation}
	Posant
	\begin{equation}\label{eq:def:E2}
		J_2(x):=\int_{\sU_x}\frac{F_\omega(\e^{u},\varrho_x)\e^{-u}\d u}{u}\quad(x\geq 3),
	\end{equation}
	nous déduisons de \eqref{eq:redef:IC-} et \eqref{eq:majo:E1} la majoration
	\begin{equation}\label{eq:majo:moduleIc-:inter:1}
		I_{\sC_x^-}(x)\ll\e^{\xi/2\varrho_x}J_2(x)\sqrt{\xi\log\xi}.
	\end{equation}
	Rappelons la définition de $a_\vartheta$ en \eqref{def:atheta;Atheta}. À l'aide du changement de variables $u=\varrho_x(1+t)$, nous obtenons
	\[J_2(x)=\e^{-\varrho_x}\int_{1/(\log\varrho_x)^2-1}^{(\log\varrho_x)^2-1}\frac{F_\omega(\e^{\varrho_x\{1+t\}},\varrho_x)\e^{-\varrho_x t}\d t}{1+t}=\e^{-\varrho_x}\int_{1/(\log\varrho_x)^2-1}^{(\log\varrho_x)^2-1}a_0(t)\d t.\]
	L'estimation \eqref{eq:est:rapport:avartheta} appliquée avec $\vartheta=0$, fournit
	\begin{equation}\label{eq:majo:error_term:E2}
		J_2(x)\ll F_\omega(\e^{\varrho_x},\varrho_x)\e^{-\varrho_x}K_x(0)\ll \frac{F_\omega(\e^{\varrho_x},\varrho_x)\e^{-\varrho_x}}{\sqrt{\varrho_x}}\CommaBin
	\end{equation}
	d'après \eqref{eq:est:Ktheta}. La formule \eqref{eq:majoration:IC-} est alors une conséquence directe des majorations \eqref{eq:majo:moduleIc-:inter:1} et \eqref{eq:majo:error_term:E2}.
\end{proof}


Il reste à évaluer $I_{\sD_x^{\pm}}$. Rappelons que
\[I_{\sD_x^{\pm}}(x)=\frac1{2\pi i}\int_{\sD_x^{\pm}}\e^s\d s\int_{\sU_x}\frac{F_\omega(\e^u,\xi/s)\e^{-u}\d u}{u}\quad(x\geq 3).\]


\begin{lemma}
	Nous avons
	\begin{equation}\label{eq:majoration:ID+-}
		|I_{\sD_x^{\pm}}(x)|\ll \frac{F_\omega(\e^{\varrho_x},\varrho_x)\e^{-\varrho_x}}{\sqrt{\varrho_x}}\quad(x\geq 3).
	\end{equation}
\end{lemma}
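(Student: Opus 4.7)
Le plan consiste \`a param\'etrer explicitement les demi-droites $\sD_x^{\pm}$ et \`a ramener la majoration souhait\'ee \`a celle de $J_2(x)$ d\'ej\`a \'etablie dans la preuve du lemme pr\'ec\'edent. Il s'agira d'\'ecrire $s = -\sigma \pm i\xi/\varrho_x$ avec $\sigma \geqslant 0$, ce qui fournira $|\e^s| = \e^{-\sigma}$ et $|s| \geqslant \xi/\varrho_x$, d'o\`u $|\xi/s| \leqslant \varrho_x$ partout sur $\sD_x^{\pm}$.

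L'\'etape cl\'e de domination repose sur l'in\'egalit\'e triangulaire $|1 + z/(q-1)| \leqslant 1 + |z|/(q-1)$, valable pour tout $z \in \C$ et tout nombre premier $q$. En prenant le produit sur les $q < \e^u$, il viendra $|F_\omega(\e^u, z)| \leqslant F_\omega(\e^u, |z|)$. Comme, par ailleurs, la fonction $t \mapsto F_\omega(\e^u, t)$ est croissante sur $\R^+$, nous en d\'eduirons $|F_\omega(\e^u, \xi/s)| \leqslant F_\omega(\e^u, \varrho_x)$ pour tout $s \in \sD_x^{\pm}$ et tout $u \in \sU_x$.

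En combinant ces observations, le module de l'int\'egrale double d\'efinissant $I_{\sD_x^{\pm}}(x)$ sera major\'e par
\begin{equation*}
\frac{1}{2\pi} \int_0^\infty \e^{-\sigma} \d\sigma \int_{\sU_x} \frac{F_\omega(\e^u, \varrho_x) \e^{-u} \d u}{u} = \frac{J_2(x)}{2\pi},
\end{equation*}
o\`u $J_2(x)$ est la quantit\'e d\'efinie en \eqref{eq:def:E2}. La majoration \eqref{eq:majoration:ID+-} r\'esultera alors imm\'ediatement de l'estimation \eqref{eq:majo:error_term:E2}. Aucune difficult\'e essentielle n'est \`a pr\'evoir: le travail d\'elicat d'\'evaluation de type col a d\'ej\`a \'et\'e accompli lors du traitement de $J_2(x)$, et la simple g\'eom\'etrie des rayons horizontaux $\sD_x^{\pm}$ (sur lesquels $|\xi/s|$ n'exc\`ede pas $\varrho_x$) permettra de recycler directement cette estimation sans perdre de facteur.
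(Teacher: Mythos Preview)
Your proposal is correct and follows essentially the same route as the paper: parametrize $\sD_x^{\pm}$ by $s=t\pm i\xi/\varrho_x$ with $t\leqslant 0$, use $|\xi/s|\leqslant \varrho_x$ together with the triangle inequality to bound $|F_\omega(\e^u,\xi/s)|\leqslant F_\omega(\e^u,\varrho_x)$, integrate out the $s$-variable via $\int_{-\infty}^0\e^t\,\d t=1$, and conclude by the estimate \eqref{eq:majo:error_term:E2} for $J_2(x)$. The only cosmetic difference is that you split the domination $|F_\omega(\e^u,\xi/s)|\leqslant F_\omega(\e^u,\varrho_x)$ into two steps (triangle inequality, then monotonicity in the real argument), whereas the paper does it in one line.
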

\begin{proof}
	Notons d'emblée que
	\[\sD_x^{\pm}=\Big\{t\pm \frac{i\xi}{\varrho_x}:t\leq 0\Big\}\]
	et rappelons la définition de $J_2(x)$ en \eqref{eq:def:E2}. Nous affirmons que
	\begin{equation}\label{eq:majo:I:D^+:1}
		I_{\sD^{\pm}}(x)\ll J_2(x).
	\end{equation}
	En effet, puisque l'on a, uniformément pour $t\leq 0$,
	\begin{align*}
		\bigg|F_\omega\Big({\e^u},\frac{\xi}{t\pm i\xi/\varrho_x}\Big)\bigg|&=\prod_{q<\e^u}\bigg|1+\frac{\xi}{(t\pm i\xi/\varrho_x)(q-1)}\bigg|\leq\prod_{q<\e^u}\Big(1+\frac{\varrho_x}{q-1}\Big)=F_\omega(\e^u,\varrho_x),
	\end{align*}
	 nous pouvons écrire
	\[\bigg|\int_{\sD_x^{\pm}}F_\omega\Big({\e^u},\frac{\xi}{s}\Big)\e^s\d s\bigg|\leq\int_{-\infty}^0\bigg|F_\omega\Big({\e^u},\frac{\xi}{t\pm i\xi/\varrho_x}\Big)\bigg|\e^t\d t\leq F_\omega(\e^u,\varrho_x)\quad (u\in\sU_x),\]
	et donc \eqref{eq:majo:I:D^+:1}. Le résultat découle alors des estimations \eqref{eq:majo:error_term:E2} et \eqref{eq:majo:I:D^+:1}.
\end{proof}




\section{Preuve du Théorème \ref{thm:somme_inverse:sans_multiplicite}: complétion de l'argument}\label{sec:proof;th1}

D'après \eqref{eq:majoration:IC-} et \eqref{eq:majoration:ID+-}, il est clair que
\begin{equation*}\label{eq:compare:D;C-}
	\max\big\{I_{\sD_x^{\pm}}(x), I_{\sC_x^-}(x)\big\}\ll F_\omega(\e^{\varrho_x},\varrho_x)\e^{\xi/2\varrho_x-\varrho_x}\xi^{1/4}(\log\xi)^{3/4}.
\end{equation*}
Les estimations \eqref{eq:formule:IC+} et \eqref{eq:majoration:IC-} fournissent de plus
\begin{equation*}\label{eq:compare:C-;C+}
	\max\big\{I_{\sD_x^{\pm}}(x), I_{\sC_x^-}(x)\big\}\ll \frac{I_{\sC_x^+}(x)\sqrt{\xi}\e^{-\xi/2\varrho_x}}{\sqrt{\varrho_x}}\ll \frac{I_{\sC_x^+}(x)}{\log\xi},
\end{equation*}
puisque, d'après \eqref{eq:approximation:rho}, nous avons $\varrho_x\leq 2\sqrt{\xi/\log\xi}$ pour $x$ suffisamment grand, soit $-\xi/2\varrho_x\leq -\tfrac14\sqrt{\xi\log\xi}$.
Les estimations \eqref{eq:formule:Siotasstar} et \eqref{eq:formule:IC+} impliquent donc
\begin{equation}\label{eq:eval:S_iota**:final}
	S_{\omega,\iota}^{**}(x)=\frac{xI_{\sC_x^+}(x)}{\xi\log x}\Big\{1+O\Big(\frac1{\log \xi}\Big)\Big\}=\frac{xF_\omega(\e^{\varrho_x},\varrho_x)\e^{\xi/\varrho_x-\varrho_x}}{\sqrt 2\varrho_x(\log x)\sqrt{\log_2 x}}\Big\{1+O\Big(\frac1{\log\xi}\Big)\Big\}\cdot
\end{equation}
\par Il reste à estimer la somme $S_{\omega,\pi}(x)$ correspondant au cas des entiers $n\leq x$  pour lesquels $\omega(n)$ est pair.
Posons
\[S_{\omega,\pi}^*(x):=\sum_{p\in\sP_{\omega,x}}\frac1p\sum_{k\in\sK_{\omega,x}}\sum_{\substack{a\leq x/p\\P^+(a)<p\\\omega(a)=k}}\sum_{1\leq\ell\leq10\log_2 x}\Phi_{\omega,k+1}\Big(\frac x{ap^\ell},p\Big)\quad(x\geq 16).\]
L'estimation \eqref{eq:est:S;Sstar} persiste alors sous la forme
\[S_{\omega,\pi}(x)=S_{\omega,\pi}^*(x)+O\Big(\frac{x}{\log x}\exp\Big\{{\sqrt{2(\log_2 x)\log_3 x}\Big(1-\frac{c_1\log_4 x}{\log_3 x}\Big)}\Big\}\Big)\quad\big(x\geq  10^7\big),\]
pour une constante $c_1>\tfrac32$. L'ensemble des estimations obtenues dans le cas $\omega(n)$ impair est encore valide lorsque $\omega(n)$ est pair. La seule différence significative réside dans la formule \eqref{eq:eval:Siotasstar:hankel:sum} pour laquelle la quantité $\lambda_\omega(p,k)$ doit être remplacée par $\lambda_\omega(p,k-1)$. Définissant
\[S_{\omega,\pi}^{**}(x):=\frac{x}{2\pi i\log x}\int_{\sU_x}\frac{\e^{-u}\d u}{u}\int_{\sH_x}F_\omega\Big({\e^u},\frac{\xi}s\Big)\frac{\e^s}{s}\d s\quad(x\geq 3),\]
nous avons l'estimation
\[S_{\omega,\pi}^*(x)=S_{\omega,\pi}^{**}(x)\Big\{1+O\Big(\frac1{\log_3 x}\Big)\Big\}\quad(x\geq 3).\]
Ainsi, rappelant que $s\in\sC_x^+\Rightarrow |s|=\xi/\varrho_x$, nous obtenons finalement
\begin{equation}\label{eq:compare:pi:iota}
	S_{\omega,\pi}(x)=\varrho_x S_{\omega,\iota}(x)\Big\{1+O\Big(\frac1{\log_3 x}\Big)\Big\}\quad(x\geq 3).
\end{equation}
Par conséquent, la formule \eqref{eq:somme_inverse:sans_multiplicite} découle des estimations \eqref{eq:est:S;Sstar}, \eqref{eq:reduction:Sstar}, \eqref{eq:eval:S_iota**:final} et \eqref{eq:compare:pi:iota}.\qed


\section{Formule explicite pour $\log S_\omega(x)$ et comportement local}\label{sec:explicit_formula}


\subsection{Développement asymptotique de $\log F_\omega(\e^{\varrho_x},\varrho_x)$}

Rappelons que, d'après la définition de $F_\omega(y,z)$ en \eqref{def:Hnu;H0}, nous avons
\[\log F_\omega(\e^{\varrho_x},\varrho_x)=\sum_{q<\e^{\varrho_x}}\log\Big(1+\frac{\varrho_x}{q-1}\Big).\]
Afin d'évaluer cette somme, nous la scindons à $q=\varrho_x$. Posons
\[\beta_m:=(-1)^m(m-1)!\sum_{n\geq 1}\frac{(-1)^{n+1}}{n^m(n+1)}\quad(m\geq 1).\]


\begin{lemma}
	Soit $M\geq 1$. Nous avons
	\begin{equation}\label{eq:AsEx:logF:big}
		\sum_{v<q\leq\e^v}\log\Big(1+\frac{v}{q}\Big)=v\bigg\{\log\Big(\frac{v}{\log v}\Big)+\sum_{1\leq m\leq M}\frac{\beta_m}{(\log v)^m}+O_{M}\bigg(\frac{1}{\{\log v\}^{M+1}}\bigg)\bigg\}\cdot
	\end{equation}
\end{lemma}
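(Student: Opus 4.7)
The plan is to expand the logarithm as a power series, reduce to integrals treated by Lemme \ref{l:an_integral}, and then regroup terms to identify the coefficients $\beta_m$.

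First, since $q > v$ implies $v/q < 1$, I would write
\[
\log\Big(1+\frac{v}{q}\Big)=\sum_{n\geq 1}\frac{(-1)^{n+1}}{n}\Big(\frac{v}{q}\Big)^n
\]
and, after interchanging summations (justified by absolute convergence since $v/q \leq v/(v+1) < 1$),
\[
\sum_{v<q\leq \e^v}\log\Big(1+\frac{v}{q}\Big)=\sum_{n\geq 1}\frac{(-1)^{n+1}v^n}{n}\,S_n(v),\qquad S_n(v):=\sum_{v<q\leq \e^v}\frac{1}{q^n}.
\]
The $n=1$ term is handled separately: a quantitative Mertens-type estimate, obtained from the prime number theorem with classical error term, yields
\[
S_1(v)=\log v-\log_2 v+O\big(\e^{-c\sqrt{\log v}}\big),
\]
so this term contributes $v\log(v/\log v)+O(v\e^{-c\sqrt{\log v}})$, which absorbs into the final error.

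For $n\geq 2$, I would use partial summation together with the prime number theorem $\pi(t)=\li(t)+O(t\e^{-c\sqrt{\log t}})$ to write
\[
S_n(v)=\int_v^{\e^v}\frac{\d t}{t^n\log t}+O\big(v^{1-n}\e^{-c\sqrt{\log v}}\big),
\]
after noting that $\int_{\e^v}^\infty \d t/(t^n\log t)\ll 1/(v\e^{(n-1)v})$ is negligible. Applying Lemme \ref{l:an_integral} with $m=n$ and $J=M$ then gives, uniformly in $n\geq 2$,
\[
S_n(v)=\frac{1}{v^{n-1}}\sum_{1\leq j\leq M}\frac{(-1)^{j+1}(j-1)!}{(n-1)^j(\log v)^j}+O\!\left(\frac{1}{v^{n-1}(n\log v)^{M+1}}\right).
\]

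Substituting and exchanging the order of summation with $m:=n-1$ for $n\geq 2$:
\[
\sum_{n\geq 2}\frac{(-1)^{n+1}v^n}{n}S_n(v)=v\sum_{1\leq j\leq M}\frac{(-1)^j(j-1)!}{(\log v)^j}\sum_{m\geq 1}\frac{(-1)^{m+1}}{m^j(m+1)}+O\!\left(\frac{v}{(\log v)^{M+1}}\right),
\]
and the inner sum is exactly $(-1)^j\beta_j/((j-1)!)$ by definition of $\beta_j$, giving the stated expansion. The truncation error on the inner $n$-sum is controlled uniformly because $\sum_{n\geq 2}1/(n(n-1)^{M+1})$ converges, and the tail $\sum_{n\geq N}(v/q)^n/n$ for $N$ large can be bounded by a geometric series depending on how close $q$ is to $v$, and combined with the PNT count of primes in $[v,v+v/\log v]$ to keep it negligible.

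The main technical point will be the uniformity of Lemme \ref{l:an_integral} as $n\to\infty$, since the integration by parts constants depend on $n$; however, the bound $\sum_{n\geq 2}(v/q)^n/n\leq \log(1+v/q)\leq v/q$ for $q>v$ provides a uniform tail estimate that, combined with $\sum_{v<q\leq \e^v}1/q \ll \log v$, ensures that truncation of the $n$-series at any fixed order contributes at most $O(v/(\log v)^{M+1})$. Once this uniformity is secured, the asymptotic \eqref{eq:AsEx:logF:big} follows by collecting the $n=1$ contribution, the rearranged double series, and the error terms.
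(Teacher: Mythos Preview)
Your approach is essentially the same as the paper's: expand $\log(1+v/q)$ as a power series, convert prime sums to integrals via the prime number theorem, and evaluate the resulting integrals $\int_v^{\e^v}\d t/(t^n\log t)$ through Lemme~\ref{l:an_integral}. The only difference is the order of operations: the paper first replaces the whole sum $\sum_{v<q\leq\e^v}\log(1+v/q)$ by the single integral $I_1(v)=\int_v^{\e^v}\log(1+v/t)\,\d t/\log t$ (incurring one PNT error $\ll v\sqrt{\log v}\,\e^{-\sqrt{\log v}}$), and only then expands the logarithm under the integral sign; you expand first and then apply the PNT to each $S_n(v)$ separately. The paper's order is slightly cleaner precisely because it sidesteps the uniformity issues you raise in your last two paragraphs: with a single PNT step there is no infinite sum of error terms to control, and the interchange of the Taylor series with the $t$-integral is immediate since $v/t<1$ on $(v,\e^v]$. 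In your version the individual PNT errors $O(v\e^{-c\sqrt{\log v}}/n)$ do not sum over all $n\geq2$, so you would indeed need a truncation at $n\asymp(\log v)^{M+1}$ together with the trivial bound $\tfrac{v^n}{n}S_n(v)\leq v/\{n(n-1)\}$ for the tail; this works but is an extra step the paper's ordering avoids.
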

\begin{proof}
	Posons
	\[I_1( v):=\int_{ v}^{\e^{ v}}{\frac{\log(1+ v/t)\d t}{\log t}}\quad(v\geq 3).\]
	Une application du théorème des nombres premiers fournit
	\begin{equation}\label{eq:cut:sumlow:pnt}
		\sum_{ v<q\leq\e^ v}\log\Big(1+\frac{ v}{q}\Big)=I_1( v)+O\bigg(\int_{ v}^{\e^{ v}}\log\Big(1+\frac{ v}{t}\Big)\e^{-\sqrt{\log t}}\d t\bigg).
	\end{equation}
	Le terme d'erreur peut être majoré par
	\begin{equation}\label{eq:majo:error:logF:big:pnt}
		v\int_{ v}^{\infty}\frac{\e^{-\sqrt{\log t}}}t\d t\ll v\sqrt{\log v}\e^{-\sqrt{\log v}}.
	\end{equation}
	Par ailleurs, d'après \eqref{eq:an_integral}, nous avons
	\begin{equation}\label{eq:est:I3:inter}
		\begin{aligned}
			I_1( v)&=\int_{ v}^{\e^{ v}}\frac{\d t}{\log t}\sum_{n\geq 1}\frac{(-1)^{n+1} v^n}{nt^n}=\sum_{n\geq 1}\frac{(-1)^{n+1} v^n}{n}\int_ v^{\e^{ v}}\frac{\d t}{t^n\log t}\\
			&= v\bigg\{\log\Big(\frac{ v}{\log v}\Big)+\sum_{1\leq m\leq M}\frac{(-1)^m(m-1)!}{(\log v)^m}\sum_{n\geq 1}\frac{(-1)^{n+1}}{(n+1)n^m}+O_{M}\bigg(\frac{1}{\{\log v\}^{M+1}}\bigg)\bigg\}\cdot
		\end{aligned}
	\end{equation}
	En regroupant les estimations \eqref{eq:cut:sumlow:pnt}, \eqref{eq:majo:error:logF:big:pnt} et \eqref{eq:est:I3:inter}, nous obtenons le résultat annoncé.
\end{proof}


Posons, pour $m\geq 1$,
\begin{gather*}
	\gamma_m:=m!\bigg\{1+\frac1m\sum_{n\geq 1}\frac{(-1)^{n+1}}{n(n+1)^m}\bigg\},\quad b_m:=\gamma_m+\beta_m=(m-1)!\sum_{0\leq j\leq m/2}\frac{\alpha_{2j}}{(2j-1)!}\cdot
\end{gather*}

\begin{lemma}
	Soit $M\geq 1$. Nous avons
	\begin{equation}\label{eq:AsEx:logF:low}
		\sum_{q\leq v}\log\Big(1+\frac{ v}{q}\Big)= v\bigg\{\sum_{1\leq m\leq M}\frac{\gamma_m}{(\log v)^m}+O_{M}\bigg(\frac{1}{\{\log v\}^{M+1}}\bigg)\bigg\}\cdot
	\end{equation}
\end{lemma}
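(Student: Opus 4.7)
The plan mirrors the proof of \eqref{eq:AsEx:logF:big}, but since here $q\leq v$ we cannot expand $\log(1+v/q)$ in powers of $v/q$. Instead, I would split
$$\log\Big(1+\frac{v}{q}\Big) = \log\Big(\frac{v}{q}\Big) + \log\Big(1+\frac{q}{v}\Big),$$
so that only the second term requires a series expansion, and this expansion is now in powers of $q/v\in[0,1]$.

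First, I would approximate the sum over primes by the integral
$$I_3(v) := \int_2^v \frac{\log(1+v/t)}{\log t}\,\d t$$
using Abel summation together with the strong form of the prime number theorem $\pi(t)-\li(t)\ll t\e^{-c\sqrt{\log t}}$. Since $\partial_t\log(1+v/t) = -v/\{t(t+v)\}$ is bounded by $1/t$ for $t\leq v$, the resulting error is easily $O(v\e^{-c\sqrt{\log v}/2})$, entirely negligible against $v/(\log v)^{M+1}$.

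Second, the splitting above turns $I_3(v)$ into
$$\log v\cdot\li(v) - (v-2) + \sum_{n\geq 1}\frac{(-1)^{n+1}}{nv^n}\int_2^v \frac{t^n\,\d t}{\log t}.$$
Formula \eqref{eq:a_second_integral} with $n=0$ gives $\log v\cdot\li(v) - v = v\sum_{1\leq m\leq M}m!/(\log v)^m + O(v/(\log v)^{M+1})$; the same formula applied to each $n\geq 1$ yields, after exchanging the finite sum over $j\leq M$ with the sum over $n$,
$$\sum_{n\geq 1}\frac{(-1)^{n+1}}{nv^n}\int_2^v\frac{t^n\,\d t}{\log t} = v\sum_{1\leq m\leq M}\frac{(m-1)!}{(\log v)^m}\sum_{n\geq 1}\frac{(-1)^{n+1}}{n(n+1)^m} + O\Big(\frac{v}{(\log v)^{M+1}}\Big),$$
the boundary contribution in $2^{n+1}/(n+1)$ summing over $n$ to $O(1/v)$ and thus being absorbed.

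Finally, combining the two pieces, the coefficient of $v/(\log v)^m$ equals $m! + (m-1)!\sum_{n\geq 1}(-1)^{n+1}/\{n(n+1)^m\} = \gamma_m$ by the very definition of $\gamma_m$, which is exactly \eqref{eq:AsEx:logF:low}. The only slightly delicate point, which I would take some care to justify, is the interchange of summation over $n$ with the asymptotic expansion in $j$: this is licit because, for fixed $M$, the remainder $O(v^{n+1}/\{(n+1)\log v\}^{M+1})$ in \eqref{eq:a_second_integral}, divided by $nv^n$, produces a series of general term $O(v/\{n(n+1)^{M+1}(\log v)^{M+1}\})$, which sums to an acceptable $O(v/(\log v)^{M+1})$.
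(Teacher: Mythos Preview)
Your proof is correct and follows essentially the same route as the paper: the paper's $I_2(v)$ is your $I_3(v)$, and its first displayed line in \eqref{eq:est:I4:inter}, namely $I_2(v)=(\log v)\li(v)-v+\int_2^v\frac{\d t}{\log t}\sum_{n\geq 1}\frac{(-1)^{n+1}t^n}{nv^n}+O(1)$, is precisely your splitting $\log(1+v/t)=\log(v/t)+\log(1+t/v)$ written out, after which both arguments invoke \eqref{eq:a_second_integral} termwise. Your explicit justification of the interchange of the sum over $n$ with the truncated expansion is a welcome addition that the paper leaves implicit.
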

\begin{proof}
	Posons
	\[I_2( v):=\int_2^ v\frac{\log(1+ v/t)\d t}{\log t}\quad (v\geq 2),\]
	de sorte que
	\begin{equation}\label{eq:cut:sumbig:pnt}
		\sum_{q\leq v}\log\Big(1+\frac{ v}{q}\Big)=I_2( v)+\int_{2}^{ v}\log\Big(1+\frac{ v}{t}\Big)\d\{\pi(t)-\li(t)\}.
	\end{equation}
	Or, l'intégrale du membre de droite de \eqref{eq:cut:sumbig:pnt} peut être majorée par
	\begin{equation}\label{eq:majo:error:logF:low:pnt}
		\bigg[t\e^{-\sqrt{\log t}}\log\Big(1+\frac{ v}{t}\Big)\bigg]_2^{ v}+ v\int_2^ v\frac{\e^{-\sqrt{\log t}}\d t}{t+ v}\ll  v\e^{-\sqrt{\log v}}+\int_2^ v\e^{-\sqrt{\log t}}\d t\ll  v\e^{-\sqrt{\log v}}.
	\end{equation}
	De plus, d'après \eqref{eq:a_second_integral}, nous avons
	\begin{equation}\label{eq:est:I4:inter}
		\begin{aligned}
			I_2( v)&=(\log v)\li(v)- v+\int_2^ v\frac{\d t}{\log t}\sum_{n\geq 1}\frac{(-1)^{n+1}t^n}{n v^n}+O(1)\\
			&= v\bigg\{\sum_{1\leq m\leq M}\frac{m!}{(\log v)^m}+\sum_{n\geq 1}\frac{(-1)^{n+1}}{n v^{n+1}}\int_2^{ v}\frac{t^n\d t}{\log t}+O_{M}\bigg(\frac{1}{\{\log v\}^{M+1}}\bigg)\bigg\}\\
			&= v\sum_{1\leq m\leq M}\Bigg\{\frac{m!}{(\log v)^m}+\frac{(m-1)!}{(\log v)^m}\sum_{n\geq 1}\frac{(-1)^{n+1}}{n(n+1)^m}+O_{M}\bigg(\frac{1}{\{\log v\}^{M+1}}\bigg)\Bigg\}\cdot
		\end{aligned}
	\end{equation}
	En regroupant les estimations \eqref{eq:cut:sumbig:pnt}, \eqref{eq:majo:error:logF:low:pnt} et \eqref{eq:est:I4:inter}, nous obtenons \eqref{eq:AsEx:logF:low}. Cela termine la démonstration.
\end{proof}



\begin{proposition}
	Soit $M\geq 1$. Nous avons
	\begin{equation}\label{eq:AsEx:logF}
		\log F_\omega(\e^{\varrho_x},\varrho_x)=\varrho_x\bigg\{\log\Big(\frac{\varrho_x}{\log\varrho_x}\Big)+\sum_{1\leq m\leq M}\frac{b_m}{(\log\varrho_x)^m}+O_{M}\bigg(\frac{1}{\{\log\varrho_x\}^{M+1}}\bigg)\bigg\}\cdot
	\end{equation}
\end{proposition}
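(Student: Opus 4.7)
My plan is to mirror the strategy used in the two preceding lemmas by splitting the sum
\[
\log F_\omega(\e^{\varrho_x},\varrho_x)=\sum_{q<\e^{\varrho_x}}\log\Bigl(1+\frac{\varrho_x}{q-1}\Bigr)
\]
at the natural threshold $q=\varrho_x$, treating each piece with the appropriate analytic tool.

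For the upper range $\varrho_x<q<\e^{\varrho_x}$, one has $\varrho_x/(q-1)<1$, so the Taylor expansion of the logarithm converges and may be integrated term-by-term against $\d\pi(t)$. After invoking PNT with classical error and Lemma \ref{l:an_integral}, one recovers the main term $\varrho_x\log(\varrho_x/\log\varrho_x)$ together with the $\varrho_x\sum_m\beta_m/(\log\varrho_x)^m$ contribution, exactly as in the proof of \eqref{eq:AsEx:logF:big}. The shift $q\mapsto q-1$ in the denominator modifies the result by the additional integral $\int_{\varrho_x}^{\e^{\varrho_x}}\log(1+\varrho_x/((t-1)(t+\varrho_x)))\d t/\log t=O(1/\log\varrho_x)$, which is absorbable into the final error.

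For the lower range $q\leq\varrho_x$, where now $(q-1)/\varrho_x<1$, the crucial identity is
\[
\log\Bigl(1+\frac{\varrho_x}{q-1}\Bigr)=\log\varrho_x-\log(q-1)+\log\Bigl(1+\frac{q-1}{\varrho_x}\Bigr),
\]
leading to the decomposition
\[
\sum_{q\leq\varrho_x}\log\Bigl(1+\frac{\varrho_x}{q-1}\Bigr)
=\pi(\varrho_x)\log\varrho_x-\vartheta(\varrho_x)-\sum_{q\leq\varrho_x}\log(1-1/q)+\sum_{q\leq\varrho_x}\log\Bigl(1+\frac{q-1}{\varrho_x}\Bigr).
\]
The combination $\pi(v)\log v-\vartheta(v)$ is expanded via the Prime Number Theorem with classical error, using $\li(v)\log v=v\sum_{m\geq 0}m!/(\log v)^m+O(v/(\log v)^{M+1})$, to yield $\varrho_x\sum_{m\geq 1}m!/(\log\varrho_x)^m+O(\varrho_x\e^{-c\sqrt{\log\varrho_x}})$. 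Mertens' theorem bounds the $\sum\log(1-1/q)$ term by $\gamma+\log\log\varrho_x+O(1/\log\varrho_x)$, a quantity absorbable in the error $O(\varrho_x/(\log\varrho_x)^{M+1})$. The last sum $\sum_{q\leq\varrho_x}\log(1+(q-1)/\varrho_x)$ is evaluated by the same machinery as in the proof of Lemma \eqref{eq:AsEx:logF:low}: by substituting $u=(t-1)/\varrho_x$ in the associated integral, Taylor-expanding $1/\log(\varrho_xu+1)$ in powers of $\log u/\log\varrho_x$, and applying Lemma \ref{l:an_integral} to each resulting integral $\int_0^1\log(1+u)(\log u)^n\d u$.

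Summing the two ranges and regrouping the various $m!/(\log\varrho_x)^m$ contributions from the PNT expansion of $\pi(\varrho_x)\log\varrho_x$, the integral expansion of $\sum\log(1+(q-1)/\varrho_x)$, and the upper-range contribution, yields the stated asymptotic expansion with coefficients $\ga_m=m!+\gamma_m+\beta_m$. The main difficulty is the careful bookkeeping of the three distinct sources of each $1/(\log\varrho_x)^m$-coefficient, in particular ensuring that PNT is invoked to sufficient precision and that the $O(\log\log\varrho_x)$-sized remainders (from Mertens' theorem and from the shift $q\mapsto q-1$) are correctly absorbed into the final error of order $\varrho_x/(\log\varrho_x)^{M+1}$.
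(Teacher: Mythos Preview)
Your approach is correct in substance, but it is considerably more laborious than the paper's. The paper observes in one line that replacing $q-1$ by $q$ in the sum
\[
\log F_\omega(\e^{\varrho_x},\varrho_x)=\sum_{q<\e^{\varrho_x}}\log\Big(1+\frac{\varrho_x}{q-1}\Big)
\]
costs only $O(\log\varrho_x)$, which is absorbed in the final error $O(\varrho_x/(\log\varrho_x)^{M+1})$; it then applies the two preceding lemmas \eqref{eq:AsEx:logF:big} and \eqref{eq:AsEx:logF:low} as black boxes and reads off $\ga_m=m!+\gamma_m+\beta_m$. You instead rebuild the content of those two lemmas with the $q-1$ shift threaded through, which forces you to redo the PNT expansion of $\pi(\varrho_x)\log\varrho_x-\vartheta(\varrho_x)$, control the Mertens term separately, and re-evaluate the $\log(1+(q-1)/\varrho_x)$ sum from scratch. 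All of this works, and your decomposition in the lower range is in fact exactly what underlies the proof of \eqref{eq:AsEx:logF:low}; but it duplicates effort already invested. One small slip: the integrals $\int_0^1\log(1+u)(\log u)^n\,\d u$ you mention are fixed constants, not quantities to which Lemma~\ref{l:an_integral} applies; they can simply be evaluated (or identified with the series defining $\gamma_m-m!$) without further asymptotic machinery.
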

\begin{proof}
	Nous avons directement
	\begin{equation}\label{eq:computing:phi_rho}
			\log F_\omega(\e^{\varrho_x},\varrho_x)=\sum_{q\leq\varrho_x}\log\Big(1+\frac{\varrho_x}{q}\Big)+\sum_{\varrho_x<q< \e^{\varrho_x}}\log\Big(1+\frac{\varrho_x}{q}\Big)+O(\log\varrho_x).
	\end{equation}
	puis \eqref{eq:AsEx:logF} en évaluant la première somme de \eqref{eq:computing:phi_rho} par \eqref{eq:AsEx:logF:low} et la deuxième par \eqref{eq:AsEx:logF:big}. 
\end{proof}



\subsection{Preuve du Corollaire \ref{cor:explicit_formula}}
Posons
\begin{gather*}
	\gE_N(x):=\frac{\sqrt{2\log_2 x}(\log_4 x)^{N+1}}{(\log_3 x)^{N+1/2}}\quad\big(x\geq  10^7\big),\quad\sG_\omega(v):=\log F_\omega(\e^v,v)+\frac{\xi}{v}-v\quad(v\geq 2),\\
	c_1=-\tfrac12,\quad c_n:=\tfrac12\big(b_{n-1}+\alpha_{n-1}\textbf{1}_{\{n\equiv 1\;\text{mod}\, 2\}}\big)\quad(n\geq 2).
\end{gather*}
Remarquons d'emblée que $\log_3 x=\e^{O(\gE_N(x))}$, de sorte que, d'après \eqref{eq:somme_inverse:sans_multiplicite}, nous avons
\begin{equation*}\label{eq:S(x):explicit:inter:1}
	\log S_\omega(x)=\log\frac{x}{\log x}+\sG_\omega(\varrho_x)+O\big({\gE_N(x)}\big).
\end{equation*}
Or, d'après \eqref{eq:AsEx:logF}, nous avons
\[\sG_\omega(\varrho_x)\approx2\varrho_x\log\varrho_x\bigg\{1-\frac{\log_2\varrho_x}{\log\varrho_x}+\sum_{n\geq 1}\frac{c_n}{(\log\varrho_x)^{n}}\bigg\}\cdot\]
Rappelons les définitions \eqref{def:series_formelles} et introduisons les séries formelles
\[L(s,t):=1-t+2sH(s,t),\quad Q(s,t):=E(s,t)L(s,t)\bigg\{1-\frac{2\{t+s\log L(s,t)\}}{L(s,t)}+\sum_{n\geq 1}c_n\Big(\frac{2s}{L(s,t)}\Big)^n\bigg\}.\]
Nous déduisons directement de développements en série successifs, l'existence de réels $\{z_{j,k}\}_{j,k\geq 0}$ tels que
\[Q(s,t)=\sum_{j,k\geq 0}z_{j,k}s^jt^k.\]
Remarquant que
\[\varrho_x\approx\sqrt{\frac{2\xi}{\log\xi}}E(\sigma_x,\tau_x),\quad \log \varrho_x\approx\frac{L(\sigma_x,\tau_x)}{2\sigma_x},\quad\log_2\varrho_x\approx\frac{\tau_x}{\sigma_x}+\log L(\sigma_x,\tau_x),\]
nous obtenons
\[\sG_\omega(\varrho_x)\approx\sqrt{2\xi\log\xi}Q(\sigma_x,\tau_x).\]
En regroupant les termes de même degré, nous en déduisons l'existence de polynômes $P_j\ (j\geq 0)$ tels que
\[\sG_\omega(\varrho_x)\approx\sqrt{2\xi\log\xi}\sum_{j\geq 0}\frac{P_j(\log_2\xi)}{(\log\xi)^j},\]
vérifiant
\begin{equation}\label{def:polynomesPj:expl}
	P_j(X):=\sum_{0\leq k\leq j}z_{j-k,k}(X-\log 2)^k
\end{equation}
et donc \eqref{eq:formule_explicite:S}. Remarquons enfin que
\[[X^j]P_j(X)=z_{0,j}=[t^j]Q(0,t)=[t^j]\sqrt{1-3t}=\frac{3^j(2j)!}{4^j(1-2j)(j!)^2}\cdot\]
Cela achève la démonstration.



\subsection{Preuve du Corollaire \ref{cor:local_behaviour}}
Rappelons la définition de $\varrho_x$ en \eqref{def:Hnu;H0} et posons
\[\delta_x:=\frac1{\log_3 x},\quad \eta_x:=\frac{\sqrt{\log_2 x}}{(\log_3 x)^{3/2}}\quad (x\geq 16).\]
Commençons par préciser le comportement local du paramètre $\varrho_x$. Afin d'alléger l'écriture, nous notons dans la suite $X:=x^{1+h}\ (h\in\R)$.


\begin{lemma}
	Soient $0<\varepsilon<1$ et $h>-1+\varepsilon$. Sous les conditions $x\geq 10^{7/\varepsilon}$, $\log(1+h)\ll\eta_x$, nous avons
	\begin{equation}\label{eq:delta_varrho}
		\varrho_{X}-\varrho_x\ll\delta_x^2.
	\end{equation}
\end{lemma}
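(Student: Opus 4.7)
The plan is to control $|\varrho_X-\varrho_x|$ by a direct comparison of $\Psi_X$ with $\Psi_x$ at the common point $v=\varrho_x$, exploiting the fact that the piece of $\Psi$ involving $F_\omega$ in \eqref{def:Hnu;H0} does not depend on $x$. First I would record the trivial identity
$$\Psi_X(v)-\Psi_x(v)=\frac{\log(1+h)}{v^2}\qquad(v>0),$$
which follows immediately from the definition, since $\log_2 X=\xi+\log(1+h)$. Evaluating at $v=\varrho_x$ and combining with \eqref{eq:estimation_rho}, the approximation $\varrho_x\asymp\sqrt{\xi/\log\xi}$ from \eqref{eq:approximation:rho} (where $\xi=\log_2 x$), and the hypothesis $\log(1+h)\ll\eta_x$, I would obtain
$$\Psi_X(\varrho_x)\ll \e^{-\varrho_x}+\frac{\eta_x}{\varrho_x^2}\ll\frac{1}{\sqrt{\xi\log\xi}}.$$

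Secondly, I would apply \eqref{eq:estimation_rho} to $X$ to get $\Psi_X(\varrho_X)\ll\e^{-\varrho_X}$, which is far smaller than the previous bound. Then the derivative estimate \eqref{eq:derivative;Psi}, applied with $\xi$ replaced by $\xi_X=\xi\{1+o(1)\}$, furnishes a uniform lower bound $-\Psi_X'(v)\gg (\log\xi)^{3/2}/\sqrt{\xi}$ on intervals between successive jumps of $\Psi_X$ within $\sR_X$. Because $\eta_x=o(\xi)$, both $\varrho_x$ and $\varrho_X$ lie in $\sR_X$ for large $x$, so this estimate is valid in the region of interest.

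Finally, I would combine these ingredients. The jumps of $\Psi_X$ at the points $\log q$ are negative by \eqref{eq:discon_gap_size}, hence only reinforce the monotone decrease of $\Psi_X$ on $\sR_X$. It follows that
$$\big|\Psi_X(\varrho_X)-\Psi_X(\varrho_x)\big|\;\gg\;\frac{(\log\xi)^{3/2}}{\sqrt{\xi}}\,|\varrho_X-\varrho_x|,$$
and the two bounds above then yield
$$|\varrho_X-\varrho_x|\ll\frac{\sqrt{\xi}}{(\log\xi)^{3/2}}\cdot\frac{1}{\sqrt{\xi\log\xi}}=\frac{1}{(\log\xi)^2}=\delta_x^2.$$

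The main delicate point is precisely the last step: the presence of discontinuities in $\Psi_X$ prevents a naive mean value theorem from applying verbatim. This is resolved by noting that every jump is in the direction of decrease, so that the total variation of $\Psi_X$ between $\varrho_x$ and $\varrho_X$ majorises its absolutely continuous part, which alone already produces the desired inequality. All remaining steps reduce to elementary asymptotic estimates already established in the preceding lemmas.
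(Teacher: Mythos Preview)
Your proof is correct and follows essentially the same strategy as the paper: exploit the identity $\Psi_X(v)-\Psi_x(v)=\log(1+h)/v^2$, evaluate at the ``wrong'' root, and invert the monotone decrease of $\Psi$ on $\sR_X$ to recover a bound on $|\varrho_X-\varrho_x|$. The only cosmetic difference is that the paper works with $\Psi_x(\varrho_X)$ rather than $\Psi_X(\varrho_x)$ and packages the slope estimate as a finite-difference formula $\Psi_x(v+\gv)-\Psi_x(v)=-\gv\{2\xi/v^3+1/v+O(1/v\log v)\}$ instead of separating derivative and jumps as you do; the substance is identical.
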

\begin{proof}
	Rappelons la définition de $\Psi_x(v)$ en \eqref{def:Hnu;H0} et remarquons d'emblée que
	\[\Psi_{X}(v)-\Psi_x(v)=\frac{\log(1+h)}{v^2}\quad(v>0),\]
	de sorte que, d'après \eqref{eq:estimation_rho}, nous avons
	\begin{equation}\label{eq:psi_hx-psi_x}
		\Psi_x(\varrho_X)=-\frac{\log(1+ h)}{\varrho_{X}^2}+O(\e^{-\varrho_X}).
	\end{equation}
	Par ailleurs, un rapide calcul permet d'obtenir
	\[\Psi_x(v+\gv)-\Psi_x(v)=-\gv\Big\{\frac{2\xi}{v^3}+\frac1v+O\Big(\frac1{v\log v}\Big)\Big\}\quad(\gv\ll 1),\]
	soit, avec $\gv=|\varrho_{X}-\varrho_x|$ et pour $x$ suffisamment grand,
	\begin{equation}\label{eq:delta_varrho_hx;x}
		|\varrho_{X}-\varrho_x|\leq\frac{\varrho_{x}^3|\Psi_x(\varrho_x)-\Psi_x(\varrho_X)|}{2\xi}\leq\frac{\varrho_x^3}{2\xi}\bigg\{\frac{|\log(1+h)|}{\varrho_{X}^2}+O\big({\e^{-\min\{\varrho_{X},\varrho_{x}\}}}\big)\bigg\}\CommaBin
	\end{equation}
	d'après \eqref{eq:estimation_rho} et \eqref{eq:psi_hx-psi_x}. En particulier, puisque d'après \eqref{eq:approximation:rho}, $\varrho_{X}\sim\varrho_x$ lorsque $\log(1+h)\ll\eta_x$, nous déduisons \eqref{eq:delta_varrho} de \eqref{eq:delta_varrho_hx;x}.
\end{proof}
Nous pouvons désormais compléter la preuve du Corollaire \ref{cor:local_behaviour}. Pour $\log(1+h)\ll \eta_x$, nous avons
\begin{equation*}\label{eq:estimates:utils}
	\sqrt{2\log_2X}=\{1+O(\delta_x)\}\sqrt{2\log_2 x},\quad\frac{\log_2X}{\varrho_{X}}=\frac{\log_2x}{\varrho_{X}}+O(\delta_x),\\
\end{equation*}
Nous déduisons alors de \eqref{eq:somme_inverse:sans_multiplicite} que
\begin{equation}\label{eq:eval_rapport;S(hx)/S(x)}
	\frac{(1+h)S_\omega(X)}{x^h S_\omega(x)}=\frac{\{1+O(\delta_x)\}F_\omega(\e^{\varrho_{X}},\varrho_{X})\e^{\xi/\varrho_{X}-\varrho_{X}}}{F_\omega(\e^{\varrho_{x}},\varrho_{x})\e^{\xi/\varrho_{x}-\varrho_{x}}}\cdot
\end{equation}
Par ailleurs, d'après \eqref{eq:delta_varrho}, nous avons d'une part
\begin{equation}\label{eq:delta_xi_over_rho}
	\frac{\xi}{\varrho_{X}}-\frac{\xi}{\varrho_x}\ll\frac{\xi|\varrho_{X}-\varrho_x|}{\varrho_x^2}\ll\delta_x,
\end{equation}
et, d'autre part, notant $\varrho^-:=\min(\varrho_x,\varrho_{X})$ et $\varrho^+:=\max(\varrho_x,\varrho_{X})$,
\begin{equation}\label{eq:delta_logFhx_x}
	\begin{aligned}
		\log\frac{F_\omega(\e^{\varrho_{X}},\varrho_{X})}{F_\omega(\e^{\varrho_{x}},\varrho_{x})}&=\sum_{q\leq\exp(\varrho^-)}\log\Big(1+\frac{\varrho_{X}-\varrho_x}{q-1+\varrho_x}\Big)+\sum_{\exp(\varrho^-)< q<\exp(\varrho^+)}\log\Big(1+\frac{\varrho^+}{q-1}\Big)\\
		&\ll |\varrho_{X}-\varrho_x|\log\varrho_x\ll\delta_x.
	\end{aligned}
\end{equation}
L'estimation \eqref{eq:asymptotics:Sx} résulte alors des estimations \eqref{eq:eval_rapport;S(hx)/S(x)}, \eqref{eq:delta_xi_over_rho}, \eqref{eq:delta_logFhx_x} et de la majoration \eqref{eq:delta_varrho}.\qed



\section{Preuve du Théorème \ref{thm:somme_inverse:avec_mult}}\label{sec:proof;th4}


\subsection{Préparation}

Rappelons la définition de $F_\Omega(y,z)$ en \eqref{def:Hnu;H0}, et celle de $g_\Omega(y,z)$ en \eqref{def:g}.


\begin{lemmanodot}{\bf (\cite[th. 1]{bib:alladi}). }\label{l:alladi}
	Sous les conditions $y\leq\e^{(\log x)^{2/5}}$, $|z|< 2$, nous avons
	\begin{equation}\label{eq:alladi}
		\sum_{\substack{m\leq x\\ P^-(m)\geq y}}z^{\Omega(m)}=\frac{xg_\Omega(y,z)}{\Gamma(z)(\log x)^{1-z}}+O\bigg(\frac{x(\log y)^{1-\Re z}}{(\log x)^{2-\Re z}}\bigg)\quad (x\geq 3).
	\end{equation}
\end{lemmanodot}


Posons
\begin{equation}\label{def:lambdaxpz}
	\lambda_{\Omega,x}(y,z):=\sum_{\substack{P^+(m)\leq y\\ \Omega(m)\leq3\xi/2}}\frac{z^{-\Omega(m)}}{m}\quad\big(2\leq y\leq x,\;|z|>\tfrac12\big).
\end{equation}


\begin{lemma}
	Sous les conditions $2\leq y\leq \log x$, $|z|=1$, nous avons
	\begin{equation}\label{eq:est_ser_gen_a}
		\lambda_{\Omega,x}(y,z)=F_\Omega\Big(y,\frac1z\Big)+O\Big(\frac{1}{\log x}\Big)\cdot
	\end{equation}
\end{lemma}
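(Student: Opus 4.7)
Le plan est de reconna\^itre que la somme non tronqu\'ee poss\`ede un d\'eveloppement eul\'erien exact \'egal \`a $1/F_\Omega(y,1/z)$, puis de contr\^oler la queue $\Omega(m)>3\xi/2$ par l'astuce de Rankin.

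D'abord, comme $y$ est fini, la s\'erie $\sum_{P^+(m)\leq y} z^{-\Omega(m)}/m$ converge absolument et l'on peut \'ecrire formellement
\[
\sum_{P^+(m)\leq y}\frac{z^{-\Omega(m)}}m=\prod_{q\leq y}\bigg(1-\frac{1}{zq}\bigg)^{\!-1}.
\]
L'hypoth\`ese $|z|=1$ entra\^ine $|1/z|=1<2\leq q$, donc $1/z\neq q$ pour tout premier $q\leq y$, et la d\'efinition de $F_\Omega$ en \eqref{def:Hnu;H0} fournit exactement $\prod_{q\leq y}(1-(1/z)/q)^{-1}=1/F_\Omega(y,1/z)$. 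D'apr\`es la d\'efinition \eqref{def:lambdaxpz} de $\lambda_{\Omega,x}(y,z)$, l'\'ecart avec $1/F_\Omega(y,1/z)$ est donc la contribution des entiers $m$ avec $P^+(m)\leq y$ et $\Omega(m)>3\xi/2$.

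Ensuite, puisque $|z^{-\Omega(m)}|=1$, cet \'ecart est major\'e en module par $\sum_{P^+(m)\leq y,\ \Omega(m)>3\xi/2}1/m$. Pour un param\`etre $\alpha\in(\e^{2/3},2)$ fix\'e, l'astuce de Rankin donne
\[
\sum_{\substack{P^+(m)\leq y\\ \Omega(m)>3\xi/2}}\frac1m\leq \alpha^{-3\xi/2}\sum_{P^+(m)\leq y}\frac{\alpha^{\Omega(m)}}m=\alpha^{-3\xi/2}\prod_{q\leq y}\bigg(1-\frac{\alpha}q\bigg)^{\!-1}.
\]
La condition $\alpha<2$ assure que tous les facteurs eul\'eriens sont finis et positifs, et le th\'eor\`eme de Mertens livre $\prod_{q\leq y}(1-\alpha/q)^{-1}\ll(\log y)^\alpha\leq\xi^\alpha$ puisque $\log y\leq\log_2x=\xi$.

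Enfin, puisque $\log\alpha>2/3$, nous avons $\alpha^{-3\xi/2}=\e^{-(3\log\alpha/2)\xi}\leq\e^{-(1+\eta)\xi}$ pour un $\eta>0$ fix\'e, ce qui absorbe le facteur polylogarithmique $\xi^\alpha$ et donne une queue $\ll\e^{-\xi}=1/\log x$, comme voulu.

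Le seul point d\'elicat est le calibrage de $\alpha$: il faut simultan\'ement $\alpha<2$ (pour garder le facteur $q=2$ sous contr\^ole dans le produit eul\'erien) et $\log\alpha>2/3$ (pour que le gain exponentiel $\alpha^{-3\xi/2}$ batte $1/\log x$ avec marge). L'intervalle non vide $(\e^{2/3},2)\approx(1{,}948,\,2)$ rend ce compromis possible, ce qui conclut.
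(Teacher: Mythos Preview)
Your proof is correct and follows essentially the same approach as the paper: identify the untruncated sum as the Euler product $1/F_\Omega(y,1/z)$, then bound the tail $\Omega(m)>3\xi/2$ by Rankin's trick. The only difference is that you use a single fixed parameter $\alpha\in(\e^{2/3},2)$, whereas the paper applies Rankin level by level with $v_k:=2-1/k$ for each $k>3\xi/2$, obtaining the slightly sharper tail $\ll \xi(\log y)^2/2^{3\xi/2}$; since $\tfrac32\log 2>1$, either version yields $O(1/\log x)$, and your variant is arguably cleaner.
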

\begin{proof}
	Pour tout $k\geq 1$, posons $v_k:=2-1/k$. Une majoration de type Rankin fournit
	\[\sum_{\substack{P^+(m)\leq y\\ \Omega(m)>3\xi/2}}\frac{z^{-\Omega(m)}}{m}\ll\sum_{k>3\xi/2}\sum_{P^+(m)\leq y}\frac{v_k^{\Omega(m)-k}}{m}\ll\sum_{k>3\xi/2}\frac{k}{v_k^{k}}\prod_{\substack{3\leq q\leq y}}\Big(1-\frac{v_k}{q}\Big)^{-1}\ll\frac{\xi(\log y)^2}{2^{3\xi/2}}\CommaBin\]
	et donc \eqref{eq:est_ser_gen_a}, d'après \eqref{def:lambdaxpz}, en observant que $\frac32\log 2>1$.
\end{proof}



\subsection{Complétion}\label{subsec:completionOm}

Notons que, pour $p\leq\log x$ et $\Omega(A)\in\sK_{\Omega,x}$, nous avons
\[Ap\leq (\log x)^{3(\log_2 x)/2+1}\Rightarrow p\leq\log x\leq\e^{\{\log(x/Ap)\}^{2/5}}.\]
Nous déduisons donc de \eqref{eq:alladi} que, pour $p\leq \log x$, $\Omega(A)\in\sK_{\Omega,x}$ et $|z|=1$,
\begin{equation}\label{eq:est:sum_b:alladi}
	\sum_{\substack{B\leq x/Ap\\ P^-(B)\geq p}}z^{\Omega(B)}=\frac{xg_\Omega(p,z)}{Ap\Gamma(z)\{\log(x/Ap)\}^{1-z}}+O\bigg(\frac{x(\log p)^{1-\Re z}}{Ap\{\log (x/Ap)\}^{2-\Re z}}\bigg)\cdot
\end{equation}
En remarquant d'une part que $g_\Omega(p,z)\asymp(\log p)^{-\Re z}$ et d'autre part que 
\[\log\frac{x}{Ap}=\bigg\{1+O\bigg(\frac{(\log_2 x)^2}{\log x}\bigg)\bigg\}\log x,\]
l'estimation \eqref{eq:est:sum_b:alladi} implique
\begin{equation}\label{eq:est:sum_b:final}
	\sum_{\substack{B\leq x/Ap\\ P^-(B)\geq p}}z^{\Omega(B)}=\frac{xg_\Omega(p,z)}{Ap\Gamma(z)(\log x)^{1-z}}\bigg\{1+O\bigg(\frac{(\log_2 x)^2}{\log x}\bigg)\bigg\}\cdot 
\end{equation}
Rappelons la définition de $\lambda_{\Omega,x}(y,z)$ en \eqref{def:lambdaxpz}. Nous déduisons de la définition de $S_{\Omega,\iota}^*(x)$ en \eqref{def:Siotastar} et de l'estimation \eqref{eq:est:sum_b:final} que
\begin{equation}\label{eq:est:Siotastar:afterb}
	S_{\Omega,\iota}^*(x)=\bigg\{\frac1{2\pi i}+O\bigg(\frac{(\log_2 x)^2}{\log x}\bigg)\bigg\}\sum_{p\leq\log x}\frac x{p^2}\oint_{|z|=1}\frac{g_\Omega(p,z)\lambda_{\Omega,x}(p,z)\d z}{\Gamma(1+z)(\log x)^{1-z}}\quad (x\geq 3).
\end{equation}
Posons
\[\sG_\Omega(y,z):=\frac{g_\Omega(y,z)F_\Omega(y,1/z)}{\Gamma(1+z)}
\quad(y\geq 2,\, \tfrac12<|z|<2).\]
D'après \eqref{eq:est_ser_gen_a}, nous avons
\begin{equation}\label{eq:est:Siotastar:aftera}
	\begin{aligned}
		S_{\Omega,\iota}^*(x)&=\bigg\{\frac{1}{2\pi i}+O\bigg(\frac{(\log_2x)^2}{\log x}\bigg)\bigg\}\sum_{p\leq\log x}\frac{x}{p^2}\bigg\{\oint_{|z|=1}\frac{\sG_\Omega(p,z)\d z}{(\log x)^{1-z}}+O\bigg(\oint_{|z|=1}\frac{|g_\Omega(p,z){\rm d}z|}{(\log x)^{2-\Re z}}\bigg)\bigg\}\\
		&=\bigg\{\frac{1}{2\pi i}+O\bigg(\frac{(\log_2x)^2}{\log x}\bigg)\bigg\}\sum_{p\leq\log x}\frac{x}{p^2}\bigg\{\oint_{|z|=1}\frac{\sG_\Omega(p,z)\d z}{(\log x)^{1-z}}+O\Big(\frac{\log_2 x}{\log x}\Big)\bigg\},
	\end{aligned}
\end{equation}
puisque $|g_\Omega(p,z)|\asymp(\log p)^{-\Re z}$.

Fixons $J\geq 1$ et posons $\delta=\delta_J:=\tfrac12(1/{\gp_{J+1}}+1/{\gp_{J+2}})$. Le théorème des résidus fournit
\begin{equation}\label{eq:est:Siotastar:integrale}
	\frac{1}{2\pi i}\oint_{|z|=1}\frac{\sG_\Omega(p,z)\d z}{(\log x)^{1-z}}=\sum_{1\leq j\leq \min\{J+1;\,\pi(p)\}}\frac{\res(\sG_\Omega(p,z);1/{\gp_j})}{(\log x)^{1-1/{\gp_j}}}+O_J\bigg(\oint_{|z|=\delta}\frac{|{\sG_\Omega}(p,z)|}{(\log x)^{1-\Re z}}\lvert{\rm d}z\rvert\bigg)\cdot
\end{equation}
Puisque, pour $p\leq\log x$ et $|z|=\delta$, nous avons la majoration
\[|{\sG_\Omega}(p,z)|\ll\frac{|F_\Omega(p,1/\delta)|}{|F_\Omega(p,z)|}
\ll(\log p)^{\delta+1/\delta},\]
il vient
\begin{equation}\label{eq:majo_integrale_erreur}
	\oint_{|z|=\delta}\frac{|{\sG_\Omega}(p,z)|}{(\log x)^{1-\Re z}}\lvert{\rm d}z\rvert\ll\frac{(\log p)^{\delta+1/\delta}}{\log x}\int_{0}^{2\pi}(\log x)^{\delta\cos\vartheta}\d\vartheta\ll\frac1{(\log x)^{1-1/{\gp_{J+1}}}}\cdot
\end{equation}
Les estimations \eqref{eq:est:S;Sstar:Omega}, \eqref{eq:est:Siotastar:integrale} et \eqref{eq:majo_integrale_erreur} impliquent directement, d'après \eqref{eq:est:Siotastar:aftera},
\begin{equation}\label{eq:SOmstardevAsRes}	
	\begin{aligned}
		S_{\Omega,\iota}(x)&=\sum_{p\leq\log x}\frac{x}{p^2}\bigg\{\sum_{1\leq j\leq\min\{J+1;\,\pi(p)\}}\frac{\res(\sG_\Omega(p,z);1/{\gp_j})}{(\log x)^{1-1/{\gp_j}}}+O_J\bigg(\frac{1}{(\log x)^{1-1/{\gp_{J+1}}}}\bigg)\bigg\}\cdot\\
		&=\sum_{1\leq j\leq J+1}\frac{x}{(\log x)^{1-1/{\gp_j}}}\sum_{{\gp_j}\leq p\leq \log x}\frac{\res(\sG_\Omega(p,z);1/{\gp_j})}{p^2}+O_J\bigg(\frac{x}{(\log x)^{1-1/{\gp_{J+1}}}}\bigg)\cdot
	\end{aligned}
\end{equation}
En outre, posant
\begin{equation*}\label{def:Spistar}
	S_{\Omega,\pi}^*(x):=\sum_{p\leq \log x}\frac1{2\pi pi}\oint_{|z|=1}\bigg(\sum_{\substack{P^+(A)\leq p\\ \Omega(A)\in\sK_{\Omega,x}}}z^{-\Omega(A)}\sum_{\substack{B\leq x/Ap\\ P^-(B)\geq p}}z^{\Omega(B)}\bigg)\frac{\d z}{z^2}\quad(x\geq 3),
\end{equation*}
il est clair que l'estimation \eqref{eq:est:S;Sstar:Omega} reste valable sous la forme
\begin{equation}\label{eq:est:Spi;Spistar}
	S_{\Omega,\pi}(x)=S_{\Omega,\pi}^*(x)+O\Big(\frac{x}{\log x}\Big)\quad(x\geq 3).
\end{equation}
Remarquant, de plus, que
\begin{equation}\label{eq:est:Spi:Siota}
	\res\Big(\frac1z\sG_\Omega(p,z);\frac1{\gp_j}\Big)=\gp_j\res\Big({\sG_\Omega(p,z)};\frac1{\gp_j}\Big),
\end{equation}
les estimations \eqref{somme_complete}, \eqref{eq:SOmstardevAsRes}, \eqref{eq:est:Spi;Spistar} et \eqref{eq:est:Spi:Siota} impliquent alors
\begin{equation}\label{eq:SOmstardevAsRes:2}
	S_{\Omega}(x)=\sum_{1\leq j\leq J+1}\frac{(1+\gp_j)x}{(\log x)^{1-1/{\gp_j}}}\sum_{p\geq{\gp_j}}\frac{\res(\sG_\Omega(p,z);1/{\gp_j})}{p^2}+O_J\bigg(\frac{x}{(\log x)^{1-1/{\gp_{J+1}}}}\bigg),
\end{equation}
avec $\res(\sG_\Omega(p,z);1/{\gp_j})\ll g_\Omega(p,1/{\gp_j})F_\Omega(p,{\gp_j})\ll(\log p)^{{\gp_j}-1/{\gp_j}}$, puisque 
\[\sum_{p>\log x}\frac{\res(\sG_\Omega(p,z);1/{\gp_j})}{p^2}\ll\frac{1}{\sqrt{\log x}}\sum_{p}\frac{(\log p)^{{\gp_j}-1/{\gp_j}}}{p^{3/2}}\ll\frac1{\sqrt{\log x}}\cdot\]
La formule \eqref{eq:somme_inverse:avec_mult} découle alors de \eqref{eq:SOmstardevAsRes:2} et de la définition des $\gc_{j}$ en \eqref{def:g;c}. Cela termine la démonstration du Théorème \ref{thm:somme_inverse:avec_mult}.\qed




\end{document}